\newcommand{\RN}[1]{%
\textup{\uppercase\expandafter{\romannumeral#1}}%
}
\newtheorem{theorem}{Theorem}[section]
\newtheorem{corollary}[theorem]{Corollary}
\newtheorem{lemma}[theorem]{Lemma}
\newtheorem{definition}[theorem]{Definition}
\newtheorem{remark}[theorem]{Remark}
\definecolor{thistle}{rgb}{0.85, 0.75, 0.85}
\definecolor{blue-green}{rgb}{0.0, 0.87, 0.87}
\def\bF{\mathbb{F}}
\def\Fq{\mathbb{F}_q}
\def\GL{\mathrm{GL}}
\def\PG{\mathrm{PG}}
\def\PGL{\mathrm{PGL}}
\def\cP{\mathcal{P}}
\def\cL{\mathcal{L}}
\def\cV{\mathcal{V}}
\def\cZ{\mathcal{Z}}
\def\cC{\mathcal{C}}
\definecolor{amber(sae/ece)}{rgb}{1.0, 0.49, 0.0}
\definecolor{darkcyan}{rgb}{0.0, 0.55, 0.55}
\definecolor{darkseagreen}{rgb}{0.56, 0.74, 0.56}
\definecolor{salmon}{rgb}{1.0, 0.55, 0.41}
\definecolor{carminepink}{rgb}{0.92, 0.3, 0.26}
	\definecolor{electricyellow}{rgb}{1.0, 1.0, 0.0}
	\definecolor{chromeyellow}{rgb}{1.0, 0.65, 0.0}
	\definecolor{babyblue}{rgb}{0.54, 0.81, 0.94}
\def\cH{\mathcal{H}}
\title{Solids in the space of the Veronese surface in even characteristic}
\author{Nour Alnajjarine\footnote{Sabanc\i~University, Istanbul, Turkey; \texttt{nour@sabanciuniv.edu}}, \; 
Michel Lavrauw\footnote{Sabanc\i~University, Istanbul, Turkey; \texttt{mlavrauw@sabanciuniv.edu}}, \; 
Tomasz Popiel\footnote{University of Auckland, Auckland, New Zealand; \texttt{tomasz.popiel@auckland.ac.nz}}}
\begin{document}

\maketitle

\begin{abstract}
We classify the orbits of solids in the projective space $\PG(5,q)$, $q$ even, under the setwise stabiliser $K \cong \PGL(3,q)$ of the Veronese surface. 
For each orbit, we provide an explicit representative $S$ and determine two combinatorial invariants: the {\em point-orbit distribution} and the {\em hyperplane-orbit distribution}.
These invariants characterise the orbits except in two specific cases (in which the orbits are distinguished by their {\em line-orbit distributions}). 
In addition, we determine the stabiliser of $S$ in $K$, thereby obtaining the size of each orbit. 
As a consequence, we obtain a proof of the classification of pencils of conics in $\PG(2,q)$, $q$ even, which to the best of our knowledge has been heretofore missing in the literature.
%We classify the orbits of solids in the projective space $\PG(5,q)$, $q$ even, under the setwise stabiliser $K \cong \PGL(3,q)$ of the Veronese surface. 
%As a consequence, we obtain a proof of the classification of pencils of conics in $\PG(2,q)$, $q$ even, which to the best of our knowledge has been heretofore missing in the literature. 
%In addition to providing an explicit representative $S$ of each $K$-orbit of solids, we calculate the stabiliser of $S$ in $K$ and thereby determine the size of each orbit. 
%We also calculate two combinatorial invariants of $S$, namely its {\em point-orbit distribution} and its {\em hyperplane-orbit distribution}, which count (respectively) the number of points in $S$ and the number of hyperplanes containing $S$ belonging to each of the (well-known) $K$-orbits of points and hyperplanes in $\PG(5,q)$. 
%These invariants characterise the orbits of solids, except in two specific cases (in which we distinguish between the orbits by other means).
\end{abstract}

\section{Introduction}

Tensor product spaces are omnipresent in mathematics and science, with applications in areas including multilinear algebra, algebraic geometry, computational complexity theory, data analysis, finite geometry, quantum mechanics, and signal processing. 
Given a tensor product space $W=\otimes_{i=1}^m V_i$, where the $V_i$ are vector spaces over a fixed field $\bF$, a fundamental problem is to decompose a tensor $\tau \in W$ into a sum of `pure' tensors, namely to determine a decomposition of the form $\tau = \sum_{i=1}^n v_{1i} \otimes \dots \otimes v_{1m}$, and in particular to determine the {\em rank} of $\tau$, namely the minimal $n$ for which such a decomposition exists. 
For $m \geqslant 3$, the rank problem is very difficult in general \cite{hastad}, but often important: famously, the exponent of matrix multiplication is equal to the rank of a certain tensor in the case $m=3$. 

Another fundamental problem is to determine the equivalence classes of tensors in a given tensor product space, with respect to certain group actions. 
For example, the group $H=\GL(V_1) \times \dots \times \GL(V_m)$ acts on the set of pure tensors in $W$ via $(v_1 \otimes \dots \otimes v_m)^{(g_1,\dots,g_m)} = v_1^{g_1} \otimes \dots v_m^{g_m}$, and on all of $W$ by linearity. 
If some of the $V_i$ are isomorphic then one may also extend $H$ by a subgroup of the symmetric group $\operatorname{Sym}_m$ to obtain the full stabiliser $G \leqslant H \wr \operatorname{Sym}_m$ of the set of pure tensors in $W$. 
Naturally, one may then seek to classify the orbits of $G$ on $W$. 
For $m=2$, this is straightforward: elements of $V_1 \otimes V_2$ are matrices of size $\dim(V_1) \times \dim(V_2)$, and the action of $G$ preserves matrix rank, so there are $\min\{\dim(V_1),\dim(V_2)\}$ orbits. 
For $m \geqslant 3$, the problem becomes more difficult, and often depends on the field $\bF$ and/or the dimensions of the $V_i$. 
For example, a recent paper of the second author and Sheekey \cite{LaSh2015} provides a classification of the $G$-orbits in the case where $W = \bF^2 \otimes \bF^3 \otimes \bF^3$, for various fields $\bF$. 
It turns out that there are $15$ orbits when $\bF$ is algebraically closed, $17$ orbits when $\bF = \mathbb{R}$, and $18$ orbits when $\bF$ is a finite field. 
Of course, one may likewise seek to classify the {\em subspaces} of a given tensor product space. 
For example, by suitably contracting representatives of the aforementioned $G$-orbits of tensors in $W = \bF^2 \otimes \bF^3 \otimes \bF^3$, as per \cite[pp.~136--137]{LaSh2015}, one obtains a classification of the $2$-dimensional subspaces of $\bF^3 \otimes \bF^3$ under $\GL(3,q)^2 \wr \operatorname{Sym}_2$. 

Analogous questions arise in various natural subspaces of a tensor product space. 
For example, consider the space $W = S^n V$ of symmetric tensors in an $n$-fold tensor product $V^{\otimes n}$, namely the kernel of the action of $\operatorname{Sym}_n$ on $V^{\otimes n}$. 
Then the group $G = \GL(V)$ acts on $W$ via $(v_1 \otimes \dots \otimes v_m)^g = v_1^g \otimes \dots \otimes v_m^g$. 
In this setting, one finds interesting connections with algebraic geometry, because the pure tensors in $W$ correspond to the points of the Veronese variety in the projective space $\PG(W)$, and it can be advantageous to take this point of view if one aims to classify the $G$-orbits of subspaces of $W$. 
For instance, consider the case $n=2$ and $\dim(V)=3$. 
The pure tensors in $W$ correspond to the points of the Veronese surface $\cV(\bF)$ in $\PG(W) \cong \PG(5,\bF)$, and $G \cong \GL(3,\bF)$ induces a subgroup $K \cong \PGL(3,\bF)$ leaving $\cV(\bF)$ invariant. 
As explained in Section~\ref{pre}, the hyperplanes of $\PG(W) \cong \PG(5,\bF)$ are in one-to-one correspondence with the conics of $\PG(V) \cong \PG(2,\bF)$, and lower-dimensional subspaces correspond to {\em linear system of conics}: solids, planes and lines correspond to {\em pencils}, {\em nets} and {\em webs} of conics, namely $1$-, $2$- and $3$-dimensional systems, respectively. 
Classifications of linear systems of conics with respect to the natural action of $\PGL(3,\bF)$ on $\PG(2,\bF)$ are therefore equivalent to classifications of subspaces of $\PG(5,\bF)$ with respect to the induced action of the group $K$. 
Analogous problems may be studied in spaces of alternating and/or partially symmetric tensors, and there are natural connections between various settings. 
In particular, $k$-dimensional subspaces of $S^2 \bF^3$ correspond via a suitable contraction operation to tensors in $(S^2 \bF^3) \otimes \bF^k$. 
For further discussion and references, we refer the reader to the recent e-seminar \cite{michelSlides} of the second author, and to the closely related articles \cite{NourMichel,lines,nets,LaPoSh2021,LaSh2015,LaSh2017}. 

To motivate the specific results of the present paper, some history concerning linear systems of conics is in order. 
Linear systems of conics have been studied since at least 1906--1907, when Jordan~\cite{jordan1,jordan2} classified the pencils of conics in $\PG(2,\bF)$ for $\bF=\mathbb{C}$ and $\bF=\mathbb{R}$. 
The corresponding problem for finite fields was first studied in 1908, when Dickson~\cite{dickson} classified the pencils of conics in $\PG(2,\bF_q)$ for $q$ odd. 
Per the above discussion, these results yield a classification of solids in $\PG(5,\bF)$ for all of the mentioned fields~$\bF$. 
The obvious omission is the case in which $\bF$ is a finite field $\bF_q$ with $q$ even. 
Pencils of conics over finite fields of characteristic $2$ were studied in 1927 by Campbell~\cite{campbell}. 
However, as explained in \cite[Section~1.2]{lines}, Campbell's classification was incomplete and, in some respects, erroneous. 
Although a complete classification is alluded to in the literature, we have been unable to find it anywhere with an accompanying proof. 
In particular, the classification is stated in full by Hirschfeld as Theorem~7.31 of his book \cite{hirsch}, but the result is attributed to Campbell~\cite{campbell}, who neither stated nor proved a complete classification. 
The main aim of the present paper is to rectify this situation, by proving the following theorem:

\begin{theorem} \label{mainTheorem}
Let $q$ be an even prime power. 
There are exactly 15 orbits of pencils of conics in $\PG(2,q)$ with respect to the natural action of the projectivity group $\PGL(3,q)$. 
Equivalently, there are exactly 15 orbits of solids in $\PG(5,q)$ under the induced action of $\PGL(3,q) \leqslant \PGL(6,q)$ defined in Section~\ref{pre}. 
Representatives of these orbits are given in Table~\ref{mainTableNew}, the notation of which is also defined in Section~\ref{pre}. 
\end{theorem}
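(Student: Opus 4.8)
\noindent
The plan is to prove the two bounds separately: that there are at least $15$ orbits, and at most $15$. Throughout I identify a solid $S \subset \PG(5,q)$ with its associated pencil of conics, namely the $q+1$ hyperplanes of $\PG(5,q)$ containing $S$, each of which meets the Veronese surface $\cV$ in a conic of $\PG(2,q)$. The two driving invariants are the \emph{point-orbit distribution}, recording how the points of $S$ distribute among the $K$-orbits on points of $\PG(5,q)$, and the \emph{hyperplane-orbit distribution}, recording how the $q+1$ conics of the pencil distribute among the $K$-orbits on conics (equivalently, on hyperplanes). Both are manifestly constant on $K$-orbits of solids. Before anything else I would record, from the classifications recalled in Section~\ref{pre}, the finite ``alphabets'' of point-orbits and conic-orbits of $K$ on $\PG(5,q)$ for $q$ even, since these label the entries of the two distributions.

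For the lower bound I would take the $15$ representatives in Table~\ref{mainTableNew} and compute their point-orbit and hyperplane-orbit distributions directly. As these distributions are $K$-invariant, any two representatives with distinct distributions lie in distinct orbits; the claim is then that the $15$ distributions are pairwise distinct apart from exactly two coincident pairs, which I would separate using the finer \emph{line-orbit distribution}, that is, the distribution of the lines of $S$ among the $K$-orbits on lines of $\PG(5,q)$. Concretely, the hyperplane-orbit distribution already encodes the classical data of how many conics of each degenerate type (double line, pair of conjugate lines, pair of rational lines, smooth conic, and so on) occur in the pencil, so most pencils are separated by their singular-member profile alone.

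For the upper bound---the substantive part---I would show that every pencil is $K$-equivalent to one in the table, organising the argument around the base locus and the singular members of the pencil. Since $\PGL(3,q)$ acts transitively on frames and, more generally, has few orbits on small point-configurations, I would first branch on the configuration of the base locus $\{q_0 = q_1 = 0\}$ of a generating pair $\langle q_0, q_1 \rangle$---four points in general position, collinear or coincident configurations, tangency patterns, points conjugate over $\Fq$-extensions, or a higher-dimensional base locus---and then, within each branch, on the number and types of singular conics in the pencil. In each case I would use $K$ to move the base locus and a chosen singular member into normal position, thereby reducing $\langle q_0, q_1 \rangle$ to an explicit canonical form and matching it against Table~\ref{mainTableNew}. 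Finally, for each representative $S$ I would determine $\mathrm{Stab}_K(S)$ by a direct computation with the stabilised configuration and obtain the orbit length as $|K|/|\mathrm{Stab}_K(S)|$, which doubles as a consistency check that the $15$ orbit sizes account correctly for the whole space of solids.

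The main obstacle is the exhaustiveness step in even characteristic. The classical treatments of Jordan and Dickson diagonalise the pencil $\lambda q_0 + \mu q_1$ via the Segre symbol of the associated symmetric matrices, but for $q$ even this machinery collapses: the bilinear form of a conic is alternating, so every smooth conic carries a nucleus, and the determinantal discriminant degenerates. One must therefore replace the elementary-divisor bookkeeping by characteristic-$2$-specific analysis of nuclei, of the distinguished subspace attached to $\cV$, and of how the singular members meet the base locus. It is precisely the delicacy of this case analysis---ensuring that no configuration is overlooked and that superficially different normal forms are genuinely inequivalent---that accounts for the gaps in the historical record, and it is where the bulk of the effort will lie.
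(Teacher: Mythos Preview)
Your proposal is broadly sound and shares the paper's core toolkit---point- and hyperplane-orbit distributions as separating invariants, transitivity of $\PGL(3,q)$ on small configurations to pin down canonical forms, and stabiliser computations for orbit sizes---but the organising principle differs. You branch first on the base locus of the pencil; the paper instead branches first on the hyperplane-orbit distribution, i.e.\ on which types of conics (double lines, real or imaginary line-pairs, nonsingular) occur, and only then on how the singular members meet a chosen nonsingular conic. The paper's split has the practical advantage that the coarse trichotomy (no nonsingular conics / some but not all / all $q+1$ nonsingular) is controlled cleanly by Lemmas~\ref{prop} and~\ref{3singular}, whereas the base locus can be empty or a single point across several quite different orbits, so it discriminates less well at the top level.

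Two places where your plan understates the work. First, for several orbits ($\Omega_{10}$, $\Omega_{12}$, $\Omega_{13}$, $\Omega_{14}$, $\Omega_{15}$) the paper does \emph{not} establish uniqueness by direct normalisation over $\bF_q$: it extends the pencil to $\PG(2,q^2)$ or $\PG(2,q^3)$, identifies it there with an already-classified orbit over the larger field, and then shows the relevant projectivity descends to $\bF_q$ by checking it commutes with Frobenius. Your passing reference to ``points conjugate over $\Fq$-extensions'' in the base-locus enumeration does not quite capture this mechanism. Second, for $\Omega_{15}$ (pencils with $q+1$ nonsingular conics) the paper abandons normalisation entirely and instead \emph{counts} the solids with hyperplane-orbit distribution $[0,0,0,q+1]$ (Lemma~\ref{lem:card_15}) and matches this against $|K|/|K_{S_{15}}|$; your ``normalise and match'' scheme would need a replacement argument here. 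Finally, a minor correction: for $q>2$ only \emph{one} pair of orbits ($\Omega_{11}$, $\Omega_{12}$) shares both the point- and hyperplane-orbit distributions, not two, and the paper separates them primarily by stabiliser, offering the line-orbit distribution only as an alternative (Remark~\ref{o6Remark}).
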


We also provide a significant amount of additional information about these orbits, summarised in Table~\ref{invariantsTable}. 
In particular, we calculate the stabiliser in $\PGL(3,q)$ of each representative, and thereby determine the size of each orbit. 
Additionally, we calculate two combinatorial invariants of each orbit, which can be used to distinguish between the orbits except in two specific cases. 
As explained in Section~\ref{pre}, there are exactly four $\PGL(3,q)$-orbits of points and four $\PGL(3,q)$-orbits of hyperplanes in $\PG(5,q)$. 
For each type of solid $S$, we calculate the {\em point-orbit distribution} of $S$, namely the number of points of each type belonging to $S$, and the {\em hyperplane-orbit distribution} of $S$, namely the number of hyperplanes of each type in which $S$ is contained. 
The latter can be interpreted in the setting of pencils of conics as counting the number of double lines, pairs of real lines, pairs of conjugate imaginary lines, and nonsingular conics contained in each type of pencil (see Section~\ref{pre}). 
Inspection of Table~\ref{invariantsTable} yields the following result. 

\begin{corollary} \label{corollaryDists}
Let $S$ and $S'$ be solids in $\PGL(5,q)$, $q$ even. 
Suppose that the point-orbit distributions of $S$ and $S'$ are equal, and that the hyperplane-orbit distributions of $S$ and $S'$ are equal. 
Then either (i) $S$ and $S'$ belong to the same $\PGL(3,q)$-orbit, (ii) $S$ and $S'$ belong to the union of the orbits $\Omega_{11}$ and $\Omega_{12}$, or (iii) $q=2$ and $S$ and $S'$ belong to the union of the orbits $\Omega_4$ and $\Omega_9$. 
\end{corollary}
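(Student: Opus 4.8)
The plan is to read off the corollary directly from the data compiled in Table~\ref{invariantsTable}, since by hypothesis we already have, for each of the 15 orbits, its point-orbit distribution and its hyperplane-orbit distribution. First I would treat the pair of invariants as a single composite invariant: to each orbit $\Omega_i$ attach the ordered pair $(P_i, H_i)$, where $P_i$ is the 4-tuple recording how many points of each of the four $\PGL(3,q)$-point-types lie on a representative solid, and $H_i$ is the 4-tuple recording how many hyperplanes of each of the four hyperplane-types contain it. The corollary asserts precisely that the map $\Omega_i \mapsto (P_i, H_i)$ is injective except for the two stated collisions. So the whole proof reduces to verifying that, among the 15 rows of Table~\ref{invariantsTable}, the only coincidences of the composite invariant occur between $\Omega_{11}$ and $\Omega_{12}$ for all even $q$, and between $\Omega_4$ and $\Omega_9$ in the single case $q=2$.

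The key steps, in order, are as follows. I would first confirm that each entry in the two invariant columns of Table~\ref{invariantsTable} is correct; by the setup of the paper these are functions of $q$ (polynomials, or piecewise expressions in $q$), so each row contributes an explicit symbolic 8-tuple in the indeterminate $q$. Next I would carry out the pairwise comparison of these 8-tuples across the $\binom{15}{2}=105$ pairs of orbits. For most pairs one can exhibit a single coordinate whose two symbolic values differ for every even prime power $q$, which immediately separates the orbits; this is a finite, mechanical case analysis and needs no cleverness beyond bookkeeping. The delicate pairs are those whose invariant tuples agree as polynomials in $q$, or agree only for small $q$: for these I would argue the equality (respectively the failure of equality for $q>2$) coordinate by coordinate. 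In particular I would verify that the tuples for $\Omega_{11}$ and $\Omega_{12}$ coincide identically in $q$, establishing case~(ii), and that the tuples for $\Omega_4$ and $\Omega_9$ agree when $q=2$ but differ in at least one coordinate for every even $q>2$, establishing case~(iii) together with its restriction to $q=2$.

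The main obstacle is not conceptual but combinatorial: one must be certain that the pairwise separation is exhaustive and that no further accidental coincidence has been overlooked, especially for the smallest value $q=2$, where several distributions degenerate and collisions are most likely to appear. Because the invariants are given as expressions in $q$, a coincidence that holds for generic $q$ must be checked against the possibility that additional rows collapse onto one another when $q$ is small; conversely, a coincidence that appears only at $q=2$ (as with $\Omega_4$ and $\Omega_9$) must be confirmed to genuinely break for all larger even $q$. Thus the crux is a careful, complete sweep of the table with particular attention to the boundary case $q=2$, after which the statement follows. Since the heavy lifting—namely the computation of the distributions themselves—is already recorded in Table~\ref{invariantsTable}, the proof is essentially an inspection argument, exactly as indicated in the statement preceding the corollary.
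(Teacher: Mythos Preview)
Your proposal is correct and matches the paper's own approach exactly: the paper states that the corollary follows by inspection of Table~\ref{invariantsTable}, and your write-up simply spells out what that inspection entails (pairwise comparison of the 15 composite $(P_i,H_i)$ tuples, with special attention to the degenerate case $q=2$). There is no additional idea in the paper beyond what you have described.
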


In cases (ii) and (iii), one can decide whether $S$ and $S'$ belong to the same orbit by determining whether they intersect a certain orbit of {\em lines} in $\PG(5,q)$, as explained in Remark~\ref{o6Remark} and Section~\ref{F2}. 
The line orbits themselves were calculated by the second and third authors in \cite{lines}. 
We expect that the point- and hyperplane-orbit distributions given in Table~\ref{invariantsTable} will also have further applications, given that we have previously found data of this kind extremely useful in related work, both theoretical \cite{LaPoSh2021} and computational \cite{NourMichel}. 

The paper is structured as follows. 
In Section~\ref{pre} we make explicit the aforementioned connection between solids in $\PG(5,q)$ and pencils of conics in $\PG(2,q)$, using the Veronese map, and collect some preliminary lemmas. 
The proofs of Theorem~\ref{mainTheorem} and the associated data in Table~\ref{invariantsTable} are then given in Sections~\ref{section1}--\ref{F2}. 
Finally, we compare our results with the aforementioned work of Campbell~\cite{campbell} in Section~\ref{comparison1}.
Before proceeding, we note that our arguments intentionally exploit the connection between solids in $\PG(5,q)$ and pencils of conics in $\PG(2,q)$. 
By this we mean that we generally aim to use each point of view to its advantage. 
For instance, there seems to be no obvious way to calculate the point-orbit distribution of a solid by working directly with the associated pencil of conics.  
On the other hand, stabilisers are sometimes significantly easier to compute by working with pencils of conics, since we can appeal to well-known transitivity properties of the natural action of $\PGL(3,q)$ on $\PG(2,q)$, as opposed to having to work directly with matrix representations of the associated solids (see e.g. the proof of Lemma~\ref{s4}). 

%%%

\begin{table}[!ht]
\center
\small
\begin{tabular}{llll}
\toprule
Orbit & Representative
& Generating conics & Conditions \\
\midrule 
\vspace{2pt}
$\Omega_1$ & 
$\begin{bmatrix} x&y&z\\y&t&\cdot\\z&\cdot&t \end{bmatrix}$ & 
\makecell[l]{$(X_1+X_2)^2$ \\ $X_1X_2$}
& \\
\vspace{2pt}
$\Omega_2$ & 
$\begin{bmatrix} x&y&z\\y&t&\cdot\\z&\cdot&\cdot \end{bmatrix}$ & 
\makecell[l]{$X_2^2$ \\ $X_1X_2$}
& \\
\vspace{2pt}
$\Omega_3$ & 
$\begin{bmatrix} x&y&z\\y&\cdot&t\\z&t&\cdot \end{bmatrix}$ & 
\makecell[l]{$X_1^2$ \\ $X_2^2$}
& \\
\vspace{2pt}
$\Omega_4$ & 
$\begin{bmatrix} x&\cdot&y\\\cdot&z&\cdot\\y&\cdot&t \end{bmatrix}$ & 
\makecell[l]{$X_0X_1$ \\ $X_1X_2$}
& \\
\vspace{2pt}
$\Omega_5$ & 
$\begin{bmatrix} \cdot&x&y\\x&z&t\\y&t&x \end{bmatrix}$ & 
\makecell[l]{$X_0X_1+X_2^2$ \\ $X_0^2$}
& \\
\vspace{2pt}
$\Omega_6$ & 
$\begin{bmatrix} x&\cdot&y\\\cdot&z&t\\y&t&\cdot \end{bmatrix}$ & 
\makecell[l]{$X_0X_1+X_2^2$ \\ $X_2^2$}
& \\
\vspace{2pt}
$\Omega_7$ & 
$\begin{bmatrix} x&y&z\\ y&x+\gamma y&t\\z&t&y \end{bmatrix}$ & 
\makecell[l]{$X_0X_1+X_2^2$ \\ $(X_0+X_1+\gamma X_2)^2$}
& $\operatorname{Tr}(\gamma^{-1})=1$ \\
\vspace{2pt}
$\Omega_8$ & 
$\begin{bmatrix}x&y&z\\y&t&z\\z&z&y\end{bmatrix}$ & 
\makecell[l]{$X_0X_1+X_2^2$ \\ $(X_0+X_2)(X_1+X_2)$}
& \\
\vspace{2pt}
$\Omega_9$ & 
$\begin{bmatrix}x&x&y\\x&z&t\\y&t&t\end{bmatrix}$ & 
\makecell[l]{$X_0(X_0+X_1)$ \\ $X_2(X_1+X_2)$}
& \\
\vspace{2pt}
$\Omega_{10}$ & 
$\begin{bmatrix}x&y&z\\y& y+\gamma t& t\\ z& t&y\end{bmatrix}$ & 
\makecell[l]{$X_0X_1+X_2^2$ \\ $X_1(X_0+X_1+\gamma X_2)$}
& $\operatorname{Tr}(\gamma^{-1})=1$ \\
\vspace{2pt}
$\Omega_{11}$ & 
$\begin{bmatrix} x&y&z\\y&t&\cdot\\z&\cdot&y \end{bmatrix}$ & 
\makecell[l]{$X_0X_1+X_2^2$ \\ $X_1X_2$}
& \\
\vspace{2pt}
$\Omega_{12}$ & 
$\begin{bmatrix}x&y&z\\y&t&\gamma y+z\\z&\gamma y+z&y\end{bmatrix}$ & 
\makecell[l]{$X_0X_1+X_2^2$ \\ $X_2(X_0+X_1+\gamma X_2)$}
& $\operatorname{Tr}(\gamma^{-1})=1$ \\
\vspace{2pt}
$\Omega_{13}$ & 
$\begin{bmatrix} x&y&z\\y&\gamma x+y&t\\z&t&\gamma x+z \end{bmatrix}$ & 
\makecell[l]{$\gamma X_0^2 + X_0X_1 + X_1^2$ \\ $\gamma X_0^2 + X_0X_2 + X_2^2$}
& $\operatorname{Tr}(\gamma)=1$ \\
\vspace{2pt}
$\Omega_{14}$ & 
$\begin{bmatrix} x &y&\gamma x+y+\gamma t\\y&\gamma x+y&z\\\gamma x+y+\gamma t&z&t \end{bmatrix}$ & 
\makecell[l]{$X_1^2 + X_0X_2 + \gamma X_2^2$ \\ $\gamma X_0^2 + X_0X_1 + X_1^2$}
& $\operatorname{Tr}(\gamma)=1$ \\
\vspace{2pt}
$\Omega_{15}$ & 
$\begin{bmatrix}x&y&bz+cy\\y&z&t\\bz+cy&t&y\end{bmatrix}$ & 
\makecell[l]{$X_0X_1+X_2^2$ \\ $X_0X_2 + bX_1^2 + cX_2^2$}
& $b\lambda^3 + c\lambda + 1$ irreducible over $\bF_q$ \\
\bottomrule
\end{tabular}
\caption{The $\PGL(3,q)$-orbits of solids in $\PG(5,q)$ and pencils of conics in $\PG(2,q)$, $q$ even. Notation is as defined in Section~\ref{pre}; in particular, notation in the second column is as in \eqref{egSolid}.}
\label{mainTableNew}
\end{table}

%%%

\begin{table}
\center
\small
\begin{tabular}{lllll}
\toprule
Orbit & Point OD & Hyperplane OD & Stabiliser & Orbit size \\
\midrule
$\Omega_1$ & $[1,q+1,2q^2-1,q^3-q^2]$ & $[1,q/2,q/2,0]$ & $E_q^2 : (E_q \times C_{q-1})$ & $(q^3-1)(q+1)$ \\
$\Omega_2$ & $[q+1,q+1,2q^2-q-1,q^3-q^2]$ & $[1,q,0,0]$ & $E_q^{1+2} : C_{q-1}^2$ & $(q^2+q+1)(q+1)$ \\
$\Omega_3$ & $[1,q^2+q+1,q^2-1,q^3-q^2]$ & $[q+1,0,0,0]$ & $E_q^2 : \GL(2,q)$ & $q^2+q+1$ \\
$\Omega_4$ & $[q+2,1,2q^2-2,q^3-q^2]$ & $[0,q+1,0,0]$ & $\GL(2,q)$ & $q^2(q^2+q+1)$ \\
$\Omega_5$ & $[1,q+1,q^2-1,q^3]$ & $[1,0,0,q]$ & $E_q^2 : C_{q-1}$ & $q(q^3-1)(q+1)$ \\
$\Omega_6$ & $[2,q+1,q^2+q-2,q^3-q]$ & $[1,1,0,q-1]$ & $C_{q-1}^2 : C_2$ & $\tfrac{1}{2}q^3(q^2+q+1)(q+1)$ \\
$\Omega_7$ & $[0,q+1,q^2+q,q^3-q]$ & $[1,0,1,q-1]$ & $D_{2(q+1)} \times C_{q-1}$ & $\tfrac{1}{2}q^3(q^3-1)$ \\
$\Omega_8$ & $[3,1,q^2+2q-3,q^3-q]$ & $[0,2,0,q-1]$ & $C_{q-1} \times C_2$ & $\tfrac{1}{2}q^3(q^3-1)(q+1)$ \\
$\Omega_9$ & $[4,1,q^2+3q-4,q^3-2q]$ & $[0,3,0,q-2]$ & $\operatorname{Sym}_4$ & $\tfrac{1}{24}q^3(q^3-1)(q^2-1)$ \\
$\Omega_{10}$ & $[1,1,q^2+2q-1,q^3-q]$ & $[0,1,1,q-1]$ & $C_{q-1} \times C_2$ & $\tfrac{1}{2}q^3(q^3-1)(q+1)$ \\
$\Omega_{11}$ & $[2,1,q^2+q-2,q^3]$ & $[0,1,0,q]$ & $E_q : C_{q-1}$ & $q^2(q^3-1)(q+1)$ \\
$\Omega_{12}$ & $[2,1,q^2+q-2,q^3]$ & $[0,1,0,q]$ & $C_2^2$ & $\tfrac{1}{4}q^3(q^3-1)(q^2-1)$ \\
$\Omega_{13}$ & $[0,1,q^2+3q,q^3-2q]$ & $[0,1,2,q-2]$ & $C_2^2 : C_2$ & $\tfrac{1}{8}q^3(q^3-1)(q^2-1)$ \\
$\Omega_{14}$ & $[0,1,q^2+q,q^3]$ & $[0,0,1,q]$ & $C_4$ & $\tfrac{1}{4}q^3(q^3-1)(q^2-1)$ \\
$\Omega_{15}$ & $[1,1,q^2-1,q^3+q]$ & $[0,0,0,q+1]$ & $C_3$ & $\tfrac{1}{3}q^3(q^3-1)(q^2-1)$ \\
\bottomrule
\end{tabular}
\caption{Invariants of $\PGL(3,q)$-orbits of solids in $\PG(5,q)$, $q$ even. 
Point- and hyperplane-orbit distributions (OD) are defined in  Section~\ref{pre}. 
In the fourth column, $E_q$ denotes an elementary abelian group of order~$q$, and $E_q^{1+2}$ is a group with centre $Z \cong E_q$ such that $E_q^{1+2}/Z \cong E_q^2$ (e.g. the group of upper-unitriangular $3 \times 3$ matrices over $\bF_q$); $C_k$, $D_k$ and $\operatorname{Sym}_k$ are cyclic, dihedral and symmetric groups, respectively.}
\label{invariantsTable}
\end{table}

%%%

\section{Preliminaries}\label{pre}

In this section, we review some definitions and theory needed in our proofs, most of which can be found in \cite{harris,lines}. 
We also refer the reader to \cite{Havlicek} for an overview of the interesting properties of the Veronese surface over finite fields. 
The {\it Veronese surface} $\cV(\bF_q)$ is a 2-dimensional algebraic variety in $\PG(5,q)$ which can be defined as the image of the Veronese map
\[
\nu: \PG(2,q)\rightarrow \PG(5,q) \quad \text{given by} \quad
(u_0,u_1,u_2) \mapsto (u_0^2,u_0u_1,u_0u_2,u_1^2,u_1u_2, u_2^2).
\]
A point $P=(y_0,y_1,y_2,y_3,y_4,y_5)$ of $\PG(5,q)$ can also be represented by a symmetric $3 \times 3$ matrix
\[
M_P=\begin{bmatrix} y_0&y_1&y_2\\
y_1&y_3&y_4\\
y_2&y_4&y_5  \end{bmatrix}.
\]
This representation can be extended to any subspace of $\PG(5,q)$. 
For example, the solid spanned by the first four points of the standard frame of $\PG(5,q)$ is represented by 
\begin{equation} \label{egSolid}
\begin{bmatrix} x&y&z\\y&t&\cdot\\z&\cdot&\cdot \end{bmatrix}:=\left\{
\begin{bmatrix} x&y&z\\y&t&0\\z&0&0\end{bmatrix}:(x,y,z,t)\in \mathbb{F}_q^4 \setminus \{(0,0,0,0)\}
\right\},
\end{equation}
where the notation on the left is introduced for convenience (that is, $\cdot$ represents $0$, and the $4$-tuple $(x,y,z,t)$ is understood to range over all non-zero elements of $\mathbb{F}_q^4$). 
This notation is used throughout the paper.  

The {\it rank} of a point $P$ of $\PG(5,q)$ is defined to be the rank of the matrix $M_P$. 
The points of rank $1$ are (therefore precisely) those belonging to $\mathcal{V}(\Fq)$. 
Points of $\PG(5,q)$ of rank at most $2$ are the points of the {\em secant variety} of $\mathcal{V}(\Fq)$, which we denote by $\mathcal{V}(\Fq)^2$.
Lines of $\PG(2,q)$ are mapped by $\nu$ to conics lying on $\cV(\bF_q)$, and if $q>2$ then each conic lying on $\cV(\bF_q)$ is the image of a line of $\PG(2,q)$. 
Consequently, each two points $P$, $Q$ of $\mathcal{V}(\Fq)$ lie on a unique conic contained in $\cV(\bF_q)$, given by $\mathcal{C}_{P,Q}:= \nu(\langle\nu^{-1}(P),\nu^{-1}(Q)\rangle)$. 
If $q$ is even, then all tangent lines to a nonsingular conic $\mathcal{C}$ are concurrent, meeting at a point called the {\em nucleus} of $\cC$. 
The set of all nuclei of conics contained in $\mathcal{V}(\Fq)$ coincides with the set of points of a plane in $\PG(5,q)$ known as the {\em nucleus plane} of $\cV(\bF_q)$. 
In the above representation, points contained in the nucleus plane correspond to symmetric $3\times 3$ matrices with zeros on the main diagonal. 
Each rank-$2$ point $R$ of $\PG(5,q)$ defines a unique conic $\mathcal{C}_{R}$ lying on $\cV(\bF_q)$. 
If $R$ lies on the secant $\langle P,Q \rangle$ with $P,Q \in \mathcal{V}(\Fq)$ then $\cC_R=\cC_{P,Q}$. 
If $q$ is even and $R$ is contained in the nucleus plane then $R$ is the nucleus of $\cC_R$.

Given a solid $S$ of $\PG(5,q)$, setting the determinant of the matrix representing $S$ to zero defines a cubic surface, which we denote by $\Psi(S)$. 
For example, for the solid given in \eqref{egSolid}, $\Psi(S)$ is the cubic surface comprising points as in \eqref{egSolid} with $zt=0$. 
In particular, we see that $S$ has {\em rank distribution} $[q+1,2q^2,q^3-q^2]$, meaning that it contains $q+1$ points of rank~$1$, $2q^2$ points of rank~$2$, and $q^3-q^2$ points of rank~$3$. 
(The points of rank~$1$ comprise the nonsingular conic given by $z=0$ and $xt=y^2$.)
In general, we define the rank distribution of a subspace $U$ of $\PG(5,q)$ to be the $3$-tuple $[r_1,r_2,r_3]$, where $r_i$ is the number of rank~$i$ points in $U$. 
The rank distribution is related to a particular case of what we call an {\it orbit distribution}. 

\begin{definition}\label{orbitdist}
\textnormal{
Let $G \leqslant {\mathrm{P\Gamma L}}(n+1,q)$ and let $U_1,U_2,\dots,U_m$ denote (a chosen ordering of) the distinct $G$-orbits of $r$-spaces in $\PG(n,q)$. 
The \textit{$r$-space $G$-orbit distribution} of a subspace $U$ of $\PG(n,q)$ is the list 
\[
\operatorname{OD}_{G,r}(W):=[u_1,u_2,\ldots,u_m],
\]
where $u_i$ is the number elements of $U_i$ incident with $U$.
}
\end{definition}

We are interested in the action on subspaces of $\PG(5,q)$ of the group $K\leqslant \PGL(6,q)$ defined as the lift of $\PGL(3,q)$ through the Veronese map. 
Explicitly, if $\phi_A\in \PGL(3,q)$ is represented by the matrix $A \in \GL(3,q)$ then we define the corresponding projectivity $\alpha(\phi_A)\in \PGL(6,q)$ through its action on the points of $\PG(5,q)$ by 
\[
\alpha(\phi_A): P\mapsto Q \quad \text{where} \quad M_Q=AM_PA^T.
\] 
Then $K:=\alpha(\PGL(3,q))$ is isomorphic to $\PGL(3,q)$ and leaves $\cV(\bF_q)$ invariant. 

The rank distribution of a subspace of $\PG(5,q)$ is related to its $0$-space $K$-orbit distribution as follows. 
There are four $K$-orbits of $0$-spaces, i.e. points, in $\PG(5,q)$: the orbit $\cV(\bF_q)$ of rank-$1$ points, which has size $q^2+q+1$, the orbit of rank-$3$ points, which has size $q^5-q^2$, and two orbits of rank-$2$ points. 
For $q$ even, the orbits of rank-$2$ points comprise the $q^2+q+1$ points of the nucleus plane $\pi_n$, and the $(q^2-1)(q^2+q+1)$ points contained in conic planes but not in $\pi_n$. 
Therefore, the orbit distribution $\operatorname{OD}_{K,0}(U)$ of a subspace $U$ of $\PG(5,q)$, $q$ even, is the $4$-tuple $[r_1,r_{2n},r_{2s},r_3]$, where $r_i$, $i\in \{1,3\}$, is the number of rank-$i$ points in $U$, $r_{2n}$ is the number of rank-$2$ points in $U \cap \pi_n$, and $r_{2s}$ is the number of rank-$2$ points in $U \setminus \pi_n$. 

For brevity, we also call $\operatorname{OD}_{K,i}(U)$ with $i=0$ the {\em point-orbit distribution} of a subspace $U$ of $\PG(5,q)$. 
Similarly, we obtain the {\it line-}, {\it plane-}, {\it solid-}, and {\it hyperplane-orbit distributions} of $U$ for $i=1,2,3,4$ respectively. 
These data serve as useful invariants for studying $K$-orbits of subspaces of $\PG(5,q)$. 
For example, if $q$ is odd and $U$ is a plane containing at least one point of $\cV(\bF_q)$, then the line-orbit distribution of $U$ completely determines its $K$-orbit \cite{LaPoSh2021}.
The line orbits themselves were determined (for all $q$) in \cite{lines}. 

\begin{theorem}\label{lines}
There are 15 $K$-orbits of lines in $\PG(5,q)$, as listed in \cite[Table~2]{lines}.
\end{theorem}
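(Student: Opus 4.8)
The statement is precisely the main classification obtained in \cite{lines}; here I outline the strategy by which such a classification may be carried out. A line $\ell \subset \PG(5,q)$ is the span of two points, represented by symmetric $3\times 3$ matrices $M_1, M_2$ over $\Fq$, and the action of $K$ carries the pencil $\langle M_1, M_2\rangle$ to $\langle A M_1 A^T, A M_2 A^T\rangle$ with $A \in \GL(3,q)$. Classifying $K$-orbits of lines is therefore equivalent to classifying pencils of symmetric $3\times 3$ matrices up to the congruence action $M \mapsto A M A^T$. The plan is to attach to each pencil a family of congruence invariants, stratify accordingly, and then produce a canonical representative in each stratum.

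The first step is to record the invariants that are manifestly preserved by congruence: the binary cubic $D(\lambda,\mu) := \det(\lambda M_1 + \mu M_2) \in \Fq[\lambda,\mu]$ together with its factorisation type (well defined up to the $\GL(2,q)$ action on $(\lambda,\mu)$ that reparametrises the pencil); the rank distribution of the $q+1$ points of $\ell$ and, more finely, the point-orbit distribution $\operatorname{OD}_{K,0}(\ell)$ of Definition~\ref{orbitdist}; and the incidence of $\ell$ with the distinguished subvarieties $\cV(\Fq)$, its secant variety $\cV(\Fq)^2$, and the nucleus plane $\pi_n$. I would then stratify the pencils according to the factorisation type of $D$: the cases in which $D$ splits into three distinct linear factors, a linear factor times an irreducible quadratic, or a single irreducible cubic; the degenerate cases in which $D$ acquires a repeated factor; and the singular case $D \equiv 0$. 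Within each stratum, the Kronecker--Weierstrass theory of matrix pencils (organised by Segre symbols) supplies a finite list of candidate canonical forms, and the transitivity of $\PGL(3,q)$ on configurations of points, lines, and conics in $\PG(2,q)$ allows each pencil to be normalised --- for instance by fixing a common base point or the base locus of the associated conics --- and reduced to one of these forms.

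It then remains to establish \emph{distinctness} and \emph{completeness}. Distinctness follows by checking that the invariants above separate the representatives; in the few cases where the factorisation type and rank distribution coincide, the intersection pattern with $\pi_n$ and $\cV(\Fq)^2$, or ultimately the order of the stabiliser in $K$, provides the required discrimination. Completeness is verified stratum by stratum, confirming that the normalisation procedure always terminates at a listed form, so that exactly the $15$ representatives of \cite[Table~2]{lines} are obtained.

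The main obstacle is the adaptation to characteristic $2$. Over such fields the classical reduction of a symmetric pencil by simultaneous diagonalisation is unavailable: the zero-diagonal (alternating) symmetric matrices form the distinguished subspace corresponding to $\pi_n$, and each nonsingular conic carries the extra datum of its nucleus, so that pencils with identical determinant cubics and identical rank distributions may nonetheless be inequivalent. The delicate strata are therefore those in which $D$ has a repeated root or vanishes identically, where the interaction between a quadratic form and its alternating polarisation must be analysed directly; resolving these cases, and cross-checking them against the invariants, is where the bulk of the work lies.
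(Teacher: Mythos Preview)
The paper does not prove Theorem~\ref{lines}: it is quoted verbatim from \cite{lines} as a background result, with no argument given beyond the citation. There is therefore no ``paper's own proof'' to compare against, and you correctly flag this at the outset by saying the statement ``is precisely the main classification obtained in \cite{lines}''.

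As a sketch of how such a classification might go, your outline is broadly sensible but has one misleading step. You invoke ``Kronecker--Weierstrass theory (organised by Segre symbols)'' to supply candidate canonical forms within each determinantal stratum. That theory classifies matrix pencils under the \emph{two-sided} action $M \mapsto AMB$, not under congruence $M \mapsto AMA^{T}$; the classical bridge from one to the other for symmetric pencils goes through simultaneous reduction of quadratic forms and is exactly what fails in characteristic~$2$, as you yourself note in the final paragraph. So Segre symbols cannot be used as an organising device here, and the strata with $D$ having a repeated root or $D\equiv 0$ --- which you identify as the delicate ones --- are not a residual difficulty but essentially the whole problem. In practice the classification in \cite{lines} proceeds geometrically, case-splitting on the point-orbit distribution $[r_1,r_{2n},r_{2s},r_3]$ and the incidence of the line with $\cV(\Fq)$, the conic planes, and the nucleus plane, rather than on the factorisation type of the determinant cubic; your list of invariants is the right one, but the weight you place on Kronecker--Weierstrass should be shifted onto those geometric invariants from the start.
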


Hyperplanes of $\PG(5,q)$ correspond to conics of $\PG(2,q)$ through the Veronese map $\nu$. We make this correspondence explicit via the following map $\delta$ between conics of $\PG(2,q)$ and hyperplanes of $\PG(5,q)$:
\[
\delta: \cZ\Big (\sum_{0\leqslant i \leqslant j \leqslant 2}a_{ij}X_iX_j\Big)\mapsto \mathcal{Z}(a_{00}Y_0+a_{01}Y_1+a_{02}Y_2+a_{11}Y_3+a_{12}Y_4+a_{22}Y_5).
\]
Here, and throughout the paper, the homogeneous coordinates in the domain $\PG(2,q)$ of $\nu$ are denoted by $(X_0,X_1,X_2)$, the homogeneous coordinates in $\PG(5,q)$ are denoted by $(Y_0,\ldots,Y_5)$, and $\cZ(f)$ denotes the zero locus of a form $f$. 
Note that a point $P$ in $\PG(2,q)$ lies on a (given) conic $\cC$ if and only if $\nu(P)$ lies in the hyperplane $\delta(\cC)$. 
The definition of $\delta$ extends to a set $\mathcal{S}$ of conics in the obvious way:
\[
\delta(\mathcal{S}) = \cap_{\cC \in \mathcal{S}} \delta(\cC).
\]

Up to projective equivalence, there is a unique nonsingular conic in $\PG(2,q)$, and three classes of singular conics, namely (i) double lines, (ii) pairs of real lines, and (iii) pairs of (conjugate) imaginary lines. 
We denote the corresponding $K$-orbits of hyperplanes (obtained via $\delta$) as follows: $\cH_{1}$, $\cH_{2r}$ and $\cH_{2i}$ denote the $K$-orbits of hyperplanes corresponding to the $\PGL(3,q)$-orbits of singular conics of types (i), (ii) and (iii) respectively, and $\cH_3$ denotes the $K$-orbit of hyperplanes corresponding to the $\PGL(3,q)$-orbit of nonsingular conics. 
The following criterion determines when a conic is nonsingular. 

\begin{lemma}\label{pencils}
A conic $\cZ(f)$ defined by a quadratic form $f=\sum_{0\leqslant i \leqslant j \leqslant 2}a_{ij}X_iX_j$ in $\PG(2,q)$, $q$ even, is absolutely irreducible (or, equivalently, nonsingular) if and only if $a_{00}a_{12}^2+a_{11}a_{02}^2+a_{22}a_{01}^2+a_{01}a_{02}a_{12}\neq 0$. 
\end{lemma}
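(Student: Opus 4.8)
The plan is to pass from $f$ to its associated polar form, locate the \emph{nucleus} of the conic, and then recognise the displayed quantity as the value of $f$ at the nucleus. I would first recall the standard fact that a plane conic over a field is absolutely irreducible precisely when it is nonsingular: a singular conic has a point of multiplicity two and therefore splits into a pair of (possibly coincident, possibly conjugate) lines over the algebraic closure, whereas a smooth conic is irreducible over every extension. Thus it suffices to decide nonsingularity of $\cZ(f)$. To this end I associate to $f$ the polar bilinear form $b(u,v) = f(u+v) + f(u) + f(v)$. Since $q$ is even, the squared terms $a_{ii}X_i^2$ contribute nothing to $b$, so $b$ is alternating with Gram matrix
\[
B = \begin{bmatrix} 0 & a_{01} & a_{02} \\ a_{01} & 0 & a_{12} \\ a_{02} & a_{12} & 0 \end{bmatrix}.
\]

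A point $P$ is a singular point of $\cZ(f)$ exactly when $f(P) = 0$ and $P$ lies in the radical of $b$, i.e. $BP = 0$ (in characteristic $2$ the partial derivatives of $f$ are precisely the coordinates of $BX$, and Euler's relation gives no information, so both conditions are genuinely required). Because $B$ is an odd-sized alternating matrix its rank is $0$ or $2$, so the radical is never trivial; this is the feature that distinguishes the even-characteristic case from the classical discriminant criterion, and it is the crux of the argument. When $B \neq 0$ the radical is the single point
\[
N = (a_{12}, a_{02}, a_{01}),
\]
which one checks directly satisfies $BN = 0$; this $N$ is the nucleus of $\cZ(f)$. The key structural identity is $f(P + \lambda N) = f(P) + \lambda\, b(P,N) + \lambda^2 f(N) = f(P) + \lambda^2 f(N)$, valid for all $P$ since $N$ lies in the radical.

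I would then simply evaluate $f$ at the nucleus. Substituting $N = (a_{12}, a_{02}, a_{01})$ into $f$ gives, after collecting the three cross terms and using $3 = 1$ in $\Fq$,
\[
f(N) = a_{00}a_{12}^2 + a_{11}a_{02}^2 + a_{22}a_{01}^2 + a_{01}a_{02}a_{12} =: \Delta,
\]
which is exactly the displayed quantity. If $\Delta = f(N) \neq 0$ then no radical point lies on the conic, so $\cZ(f)$ is nonsingular; if $\Delta = f(N) = 0$ then $N$ is a singular point (indeed the identity above shows $f$ is constant on each line through $N$, exhibiting $\cZ(f)$ as a cone), so $\cZ(f)$ is singular. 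The remaining degenerate case $B = 0$ means $a_{01} = a_{02} = a_{12} = 0$, whence $\Delta = 0$ and, since every element of $\Fq$ is a square, $f = (\sqrt{a_{00}}\,X_0 + \sqrt{a_{11}}\,X_1 + \sqrt{a_{22}}\,X_2)^2$ is a double line and hence singular, consistently with $\Delta = 0$. Combining the cases yields that $\cZ(f)$ is nonsingular if and only if $\Delta \neq 0$.

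The main obstacle is precisely the characteristic-two phenomenon noted above: unlike the odd case, one cannot detect singularity via the determinant of a symmetric matrix, because the polar form of a ternary quadratic form is always degenerate in characteristic $2$. The resolution is to identify the (always-present) nucleus $N$ and test whether it lies on the conic, and the content of the lemma is the clean closed-form evaluation $f(N) = \Delta$.
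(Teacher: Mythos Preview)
Your argument is correct and complete: the identification of the nucleus $N=(a_{12},a_{02},a_{01})$ as the radical of the polar form, the evaluation $f(N)=\Delta$, and the case split according to whether $B=0$ or $B\neq 0$ are all sound. Note that the paper states this lemma as a standard fact without proof, so there is no argument in the paper to compare against; your nucleus-based approach is the natural one in even characteristic and would serve perfectly well as the missing justification.
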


Subspaces of $\PG(5,q)$ correspond to linear systems of conics in $\PG(2,q)$ via $\nu$: lines correspond to {\it webs}, planes to {\it nets}, and solids to {\it pencils} of conics in $\PG(2,q)$. 
In particular, the classification of webs of conics over finite fields is equivalent to Theorem~\ref{lines}. 
The {\it base} (or set of {\it base points}) of a linear system of conics is the intersection of the conics in the system. 
We make the following observation. 

\begin{lemma} \label{baseLemma}
Let $Q$ be a point and $\cP$ a pencil of conics in $\PG(2,q)$. 
Then $Q$ is a base point of $\cP$ if and only if $\nu(Q)$ lies in the solid $S = \delta(\cP)$ of $\PG(5,q)$.
\end{lemma}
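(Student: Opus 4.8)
The plan is to reduce everything to the pointwise incidence fact recorded immediately after the definition of $\delta$: a point $P \in \PG(2,q)$ lies on a conic $\cC$ if and only if $\nu(P)$ lies in the hyperplane $\delta(\cC)$. First I would make this fact explicit, since it is the crux of the argument. Writing $Q=(u_0,u_1,u_2)$, so that $\nu(Q)=(u_0^2,u_0u_1,u_0u_2,u_1^2,u_1u_2,u_2^2)$, and writing a conic as $\cC=\cZ(\sum_{0\leqslant i\leqslant j\leqslant 2}a_{ij}X_iX_j)$, the defining linear form of $\delta(\cC)$ evaluated at $\nu(Q)$ is precisely $\sum_{0\leqslant i\leqslant j\leqslant 2}a_{ij}u_iu_j$. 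Hence $\nu(Q)\in\delta(\cC)$ if and only if $Q\in\cC$, for each individual conic $\cC$.

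With this in hand, the lemma follows by chaining equivalences over all conics of the pencil. By definition, $Q$ is a base point of $\cP$ exactly when $Q$ lies on every conic $\cC\in\cP$. By the incidence fact above, this holds if and only if $\nu(Q)\in\delta(\cC)$ for every $\cC\in\cP$, which by the definition $\delta(\cP)=\cap_{\cC\in\cP}\delta(\cC)$ is in turn equivalent to $\nu(Q)\in\delta(\cP)=S$. This is exactly the asserted equivalence, so no further work is needed for the statement itself.

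The only auxiliary point worth addressing is the implicit claim that $S=\delta(\cP)$ really is a solid. Here I would use that $\delta$ is linear in the coefficients $a_{ij}$: it carries a linear combination $\lambda\cC_1+\mu\cC_2$ of conics to the hyperplane whose defining form is the corresponding combination $\lambda\ell_1+\mu\ell_2$ of the defining forms $\ell_1,\ell_2$ of $\delta(\cC_1),\delta(\cC_2)$. Thus $\delta$ maps a pencil of conics, spanned by two independent conics $\cC_1,\cC_2$, to a pencil of hyperplanes whose common intersection is $\delta(\cC_1)\cap\delta(\cC_2)$, the meet of two distinct hyperplanes, i.e.\ a $3$-dimensional subspace; in particular the intersection over all conics of $\cP$ coincides with that of the two generators. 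I expect essentially no genuine obstacle in this lemma: it is a direct bookkeeping consequence of the definition of $\delta$ on a set of conics together with the pointwise incidence fact, and the only steps warranting care are making that incidence fact explicit and confirming that $S$ has the claimed dimension.
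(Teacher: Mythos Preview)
Your proposal is correct and matches the paper's approach: the paper does not give a formal proof of this lemma, treating it as an immediate observation following the pointwise incidence fact (stated just before the lemma) together with the definition $\delta(\cP)=\cap_{\cC\in\cP}\delta(\cC)$. Your write-up simply spells out these routine details, including the dimension check for $S$, which the paper leaves implicit.
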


In other words, the points of rank~$1$ in $S$ are precisely the images under the Veronese map  of the base points of $\cP$.
The following lemma about quadratic polynomials over finite fields of characteristic $2$ is well known \cite{roots}. 
Here, and throughout the paper, $\operatorname{Tr}$ denotes the trace map from $\bF_{2^n}$ to $\bF_2$. 

\begin{lemma}\label{quadratic}
The polynomial $\alpha X^2 + \beta X + \gamma \in \mathbb{F}_{2^n}[X]$ with $\alpha \neq 0$ has exactly one root in $\mathbb{F}_{2^n}$ if and only if $\beta = 0$, two distinct roots in $\mathbb{F}_{2^n}$ if and only if $\beta \neq 0$ and $\operatorname{Tr}(\frac{\alpha \gamma}{\beta^2})=0$, and no roots in $\mathbb{F}_{2^n}$ otherwise.
\end{lemma}

We conclude this section with a lemma concerning the hyperplane-orbit distribution $\operatorname{OD}_{K,4}(S)=[a_1,a_{2r},a_{2i},a_3]$ of a solid in $\PG(5,q)$, $q$ even. 
As per above, here $a_j$ denotes the number of hyperplanes of type $\cH_j$ intersecting $U$ for each of the symbols $j \in \{1,2r,2i,3\}$. 

\begin{lemma}\label{prop}
Let $S$ be a solid of $\PG(5,q)$, where $q=2^h$ with $h>1$, and let $b$ denote the number of points of $S$ contained in $\cV(\bF_q)$. 
Then the hyperplane-orbit distribution $\operatorname{OD}_{K,4}(S)=[a_1,a_{2r},a_{2i},a_3]$ of $S$ satisfies \textnormal{(i)} $a_1+2 a_{2r}+a_3=q+b$ and \textnormal{(ii)} $a_{2r}-a_{2i}+1=b$. 
\end{lemma}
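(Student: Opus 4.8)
The plan is to pass from the solid $S$ to the pencil $\cP$ with $S=\delta(\cP)$, and to deduce both identities from a single double count of point--conic incidences. The basic dictionary is that the hyperplanes of $\PG(5,q)$ containing $S$ are exactly the images $\delta(\cC)$ of the conics $\cC\in\cP$, and that $\delta$ preserves the type of the conic. Since a pencil consists of the conics $\lambda f+\mu g$ with $(\lambda:\mu)\in\PG(1,q)$ for any two generators $f,g$, there are exactly $q+1$ of them, whence
\[
a_1+a_{2r}+a_{2i}+a_3=q+1.
\]
Furthermore, by Lemma~\ref{baseLemma} the rank-$1$ points of $S$ are the Veronese images of the base points of $\cP$, so $b$ equals the number of base points of $\cP$.

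Next I would count the incidences $(P,\cC)$ with $P\in\PG(2,q)$, $\cC\in\cP$ and $P\in\cC$, in two ways. Fixing $P$, membership of $P$ in the conic $\lambda f+\mu g$ is the single linear condition $\lambda f(P)+\mu g(P)=0$ on $(\lambda:\mu)$; hence a base point lies on all $q+1$ conics of $\cP$, while every non-base point lies on exactly one. As $\PG(2,q)$ has $q^2+q+1$ points, of which $b$ are base points, the number of incidences is
\[
b(q+1)+\bigl(q^2+q+1-b\bigr)=q^2+q+1+bq.
\]
Fixing $\cC$ instead, I use the standard counts of $\bF_q$-rational points on the four conic types: $q+1$ on a double line and on a nonsingular conic, $2q+1$ on a pair of distinct real lines, and a single point on a pair of conjugate imaginary lines. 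Summing over $\cP$ gives the number of incidences as $(q+1)(a_1+a_3)+(2q+1)a_{2r}+a_{2i}$.

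Equating the two totals yields identity (ii) after a routine simplification: writing $2q+1=(q+1)+q$ and $1=(q+1)-q$ and using the relation $a_1+a_{2r}+a_{2i}+a_3=q+1$ collapses the conic-side expression to $(q+1)^2+q(a_{2r}-a_{2i})$, and comparing with $q^2+q+1+bq$ gives $a_{2r}-a_{2i}+1=b$. Identity (i) is then immediate: eliminating $a_3=q+1-a_1-a_{2r}-a_{2i}$ turns $a_1+2a_{2r}+a_3$ into $a_{2r}-a_{2i}+q+1$, which equals $q+b$ by (ii). The only genuine content is the incidence count, and the step most in need of care is the bookkeeping of rational points per conic type together with the pencil property that a non-base point lies on precisely one member; once these are in place the two identities fall out algebraically.
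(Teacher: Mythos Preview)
Your proof is correct and is essentially the same as the paper's: both set up the identical double count of incidences between points and conics of the pencil (equivalently, between points of $\cV(\bF_q)$ and hyperplanes through $S$), obtaining $(q+1)a_1+(2q+1)a_{2r}+a_{2i}+(q+1)a_3=q^2+q+1+bq$, and both combine this with $a_1+a_{2r}+a_{2i}+a_3=q+1$. The only differences are cosmetic---you phrase the count in $\PG(2,q)$ rather than $\PG(5,q)$ and derive (ii) before (i), whereas the paper does the reverse---but the underlying argument is the same.
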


\begin{proof}
First note that $(q+1)a_1+(2q+1)a_{2r}+a_{2i}+(q+1)a_3-bq=q^2+q+1$. 
This follows from the fact that each point on $\cV(\bF_q)$ either lies in $S$ and belongs to $q+1$ hyperplanes through $S$, or belongs to exactly one hyperplane of $\PG(5,q)$ through $S$, and the fact that the hyperplanes in the orbits $\cH_1$, $\cH_{2r}$, $\cH_{2i}$, $\cH_3$ intersect $\cV(\bF_q)$ in $q+1$, $2q+1$, $1$, $q+1$ points respectively. 
Now use the fact that $a_1+a_{2r}+a_{2i}+a_3=q+1$ and divide by $q$ to get (i). 
Substitution of $a_1+a_{2r}+a_3$ by $q+1-a_{2i}$ gives (ii).
\end{proof}

The following three sections constitute the proof of Theorem~\ref{mainTheorem} and of the data in Table~\ref{invariantsTable} for $q > 2$. 
The case $q=2$ requires special treatment, and is handled in Section~\ref{F2}.

\section{Solids not contained in any hyperplane of type $\mathcal{H}_3$}\label{section1}

We begin by classifying the $K$-orbits of solids that are not contained in any hyperplane of type $\cH_3$, namely, those for which the corresponding pencil of conics contains no nonsingular conics. 
It is straightforward to list the possible configurations of pairs of conics that can occur. However, since we are interested in $K$-orbits of solids, i.e. pencils of conics as opposed to pairs of conics, we need to understand when two different types of pairs of conics give rise to the same pencil up to projective equivalence.

Here, and in subsequent sections, homogeneous coordinates in a solid $S$ of $\PG(5,q)$ are generally denoted by $(X,Y,Z,T)$, where the solid is represented as in \eqref{egSolid}. 
The pencil of conics in $\PG(2,q)$ corresponding to $S$ is denoted by $\cP(S)$, and the cubic surface obtained as the intersection of $S$ with the secant variety $\cV(\bF_q)^2$ of $\cV(\bF_q)$ is denoted by $\Psi(S)$.
As before, the homogeneous coordinates in the domain $\PG(2,q)$ of the Veronese map $\nu$ are denoted by $(X_0,X_1,X_2)$, and those in $\PG(5,q)$ are denoted by $(Y_0,\ldots,Y_5)$.

\subsection{Solids contained in a hyperplane of type $\mathcal{H}_1$}

We first treat the $K$-orbits of solids $S$ corresponding to pencils $\cP(S)$ that contain at least one double line, namely those whose hyperplane-orbit distribution $\operatorname{OD}_{K,4}(S)=[a_1,a_{2r},a_{2i},a_3]$ has $a_1 > 0$. 
If $\cP(S)$ contains exactly one double line (i.e. $a_1=1$) and no pair of (distinct) real lines ($a_{2r}=0$), then the orbit of $S$ will arise later in our analysis, since any such pencil contains a nonsingular conic, by the following lemma.

\begin{lemma}
If $\operatorname{OD}_{K,4}(S)=[1,0,a_{2i},a_3]$ with $a_{2i}>0$, then $a_3>0$.
\end{lemma}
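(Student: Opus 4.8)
The plan is to reduce the statement directly to the two counting identities established in Lemma~\ref{prop}, which apply to any solid $S$ of $\PG(5,q)$ with $q=2^h$, $h>1$ (the regime in force here, since $q>2$ means $q\geqslant 4$). Writing $\operatorname{OD}_{K,4}(S)=[a_1,a_{2r},a_{2i},a_3]=[1,0,a_{2i},a_3]$ and letting $b$ denote the number of points of $S$ lying on $\cV(\bF_q)$, I would simply substitute $a_1=1$ and $a_{2r}=0$ into relations (i) and (ii) and read off the consequences. The point to keep in mind is that $b$ is the number of base points of the pencil $\cP(S)$, by Lemma~\ref{baseLemma}, and hence is a nonnegative integer; this nonnegativity, rather than any geometric input, is what drives the argument.

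Concretely, relation (ii), namely $a_{2r}-a_{2i}+1=b$, becomes $b=1-a_{2i}$ once $a_{2r}=0$ is imposed. Since $b\geqslant 0$ and the hypothesis gives $a_{2i}>0$ (so $a_{2i}\geqslant 1$), the only possibility is $a_{2i}=1$ and $b=0$. Feeding $a_1=1$, $a_{2r}=0$ and $b=0$ into relation (i), namely $a_1+2a_{2r}+a_3=q+b$, then yields $1+a_3=q$, so that $a_3=q-1$. Because $q\geqslant 4$ in the case under consideration, this gives $a_3=q-1\geqslant 3>0$, which is exactly the desired conclusion.

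The argument is essentially immediate once Lemma~\ref{prop} is available, so there is no genuine obstacle to overcome; all of the combinatorial content has already been absorbed into the counting identities proved there. If anything, the only step requiring a moment's care is the passage from $b=1-a_{2i}$ to $a_{2i}=1$, where one must invoke the integrality and nonnegativity of $b$ together with the hypothesis $a_{2i}>0$. It is perhaps worth noting in the write-up that the same computation also pins down $a_{2i}=1$ and $b=0$, information which may be reused when the corresponding orbit is identified later in the classification.
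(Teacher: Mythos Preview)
Your proof is correct and follows essentially the same route as the paper: both substitute $a_1=1$, $a_{2r}=0$ into Lemma~\ref{prop}(ii) to force $a_{2i}=1$ (and $b=0$), and then conclude $a_3=q-1>0$. The only cosmetic difference is that the paper reads off $a_3=(q+1)-2$ directly from the total count $a_1+a_{2r}+a_{2i}+a_3=q+1$, whereas you route through relation~(i); these are equivalent.
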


\begin{proof}
Putting $a_1=1$ and $a_{2r}=0$ into Lemma~\ref{prop}(ii) gives $b=1-a_{2i}$, which implies that $a_{2i} \leqslant 1$ since $b \geqslant 0$. 
Therefore, $a_{2i}=1$ and so $a_3 = (q+1)-2 > 0$.
\end{proof}

We may therefore assume that if $\cP(S)$ contains exactly one double line, say $\cL_1^2$, then it contains at least one pair of distinct real lines, say $\cL_2\cL_3$. 
We then have the following possibilities: (i) the three lines are distinct and concurrent, (ii) $\cL_1$ coincides with one of $\cL_2$ or $\cL_3$, or (iii) the three lines are distinct and not concurrent.
Since $\PGL(3,q)$ acts transitively on each of these configurations of lines, two solids corresponding to the same configuration belong to the same $K$-orbit. 
In case (iii), $\cP(S)$ has exactly two base points, so the following lemma implies, together with Lemma~\ref{baseLemma}, that $S$ is contained in a hyperplane of type $\cH_3$.

\begin{lemma}
If $\operatorname{OD}_{K,4}(S)=[1,a_{2r},a_{2i},a_3]$ and $S$ meets $\cV(\bF_q)$ in two points, then $a_{2i}=0$ and $a_3>0$.
\end{lemma}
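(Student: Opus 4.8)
The plan is to reduce the claim to a counting statement about the singular members of the pencil $\cP(S)$ and then extract the two conclusions from the two identities of Lemma~\ref{prop}. Write $\operatorname{OD}_{K,4}(S)=[1,a_{2r},a_{2i},a_3]$ and $b=2$. First I would feed these into Lemma~\ref{prop}(ii) to obtain $a_{2r}=a_{2i}+1$; in particular $a_{2r}\geqslant 1$, so $\cP(S)$ contains a double line (accounting for $a_1=1$) and at least one pair of distinct real lines. By Lemma~\ref{baseLemma} the two points of $S\cap\cV(\bF_q)$ are $\nu(P_1),\nu(P_2)$ where $P_1,P_2$ are the two base points of $\cP(S)$, and the unique double line $\ell^2$ must have $\ell$ passing through both $P_1$ and $P_2$, so $\ell$ is the line $P_1P_2$. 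The goal is then to show that $\cP(S)$ has \emph{exactly} two singular members, for then $a_1+a_{2r}+a_{2i}=2$, hence $a_{2r}+a_{2i}=1$, and combined with $a_{2r}=a_{2i}+1$ this forces $a_{2i}=0$ and $a_{2r}=1$; finally $a_3=(q+1)-a_1-a_{2r}-a_{2i}=q-1>0$.

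To count singular members I would encode them as roots of a binary cubic. The expression $\Delta=a_{00}a_{12}^2+a_{11}a_{02}^2+a_{22}a_{01}^2+a_{01}a_{02}a_{12}$ of Lemma~\ref{pencils} is a cubic form in the six conic coefficients whose vanishing detects singularity. Choosing two generators of the pencil, say the double line $\ell^2$ (coefficient vector $\bar p$) and the real pair $\cL_2\cL_3$ (coefficient vector $\bar c$), the restriction $g(\lambda,\mu)=\Delta(\lambda\bar p+\mu\bar c)$ is a binary cubic whose distinct $\bF_q$-roots in $\PG(1,q)$ are exactly the singular members of $\cP(S)$. The key claim is that the double line is a root of $g$ of multiplicity at least $2$. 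By $K$-transitivity on double lines I may normalise $\ell^2$ to $X_0^2$, i.e. $\bar p=(1,0,0,0,0,0)$; a direct computation of the six partial derivatives of $\Delta$ at $\bar p$, using that we are in characteristic $2$ (so the terms $2a_{00}a_{12}$, $2a_{11}a_{02}$, $2a_{22}a_{01}$ vanish identically), shows that every first partial vanishes at $\bar p$. Since the coefficient of $\lambda^3$ in $g$ is $\Delta(\bar p)=0$ and the coefficient of $\lambda^2\mu$ is the first polar $\sum_i c_i\,(\partial\Delta/\partial a_i)(\bar p)=0$, the form $g$ is divisible by $\mu^2$; thus $(\lambda:\mu)=(1:0)=\bar p$ is a root of multiplicity at least $2$.

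It remains to turn this into the exact count. I would first note $g\not\equiv 0$: otherwise every member of $\cP(S)$ is singular, so $a_3=0$, and Lemma~\ref{prop} then gives $a_{2i}=(q-1)/2$, which is not an integer for $q$ even. Hence $g\neq 0$, and writing $g=L^2 m$ with $L$ the linear factor vanishing at $\bar p$ and $m$ a necessarily nonzero linear form, I see that $g$ has at most two distinct roots in $\PG(1,q)$ (the root of $L$, namely the double line, and the single root of $m$). Since the double line and the real pair $\cL_2\cL_3$ are two distinct singular members, $g$ has exactly two distinct roots, giving $a_1+a_{2r}+a_{2i}=2$ and completing the argument as above. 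The hard part will be the characteristic-$2$ verification that the double line is a singular point of the discriminant cubic and that the polar computation legitimately forces divisibility of $g$ by $\mu^2$ in this characteristic; once that is in hand, the parity remark ruling out $g\equiv 0$ and the factorisation $g=L^2m$ are routine.
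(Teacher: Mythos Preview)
Your proof is correct but takes a considerably more elaborate route than the paper's. The paper disposes of the lemma in two lines: since any hyperplane containing $S$ must contain both points of $S\cap\cV(\bF_q)$, and a hyperplane of type $\cH_{2i}$ meets $\cV(\bF_q)$ in a single point, one gets $a_{2i}=0$ immediately; then plugging $a_1=1$ and $b=2$ into Lemma~\ref{prop}(i) yields $a_3=q+1-2a_{2r}$, which is odd and hence at least~$1$.

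Your approach instead goes after the full count of singular members of $\cP(S)$ via the discriminant cubic, exploiting the (correct) characteristic-$2$ observation that a double line is a singular point of the cubic $\Delta$, so that the restriction of $\Delta$ to the pencil has a repeated root at the double line. This actually yields more: you recover $a_{2r}=1$ and $a_3=q-1$ exactly, not merely $a_3>0$, and the singularity observation is a pleasant structural fact about $\Delta$ in even characteristic that resurfaces implicitly in the hyperplane-orbit computations of Section~\ref{ss4.1}. For the purposes of this particular lemma, however, the paper's geometric observation about $\cH_{2i}$-hyperplanes together with the parity trick is far quicker, and avoids the discriminant analysis entirely. (Incidentally, your remark that characteristic~$2$ is needed for the vanishing of the first partials at $\bar p=X_0^2$ is slightly overstated: every partial of $\Delta$ vanishes at $\bar p$ simply because each monomial of $\Delta$ involves at least two of $a_{01},a_{02},a_{11},a_{12},a_{22}$, all of which are zero at $\bar p$.)
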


\begin{proof}
Since a hyperplane of type $\cH_{2i}$ meets $\cV(\bF_q)$ in one point, it follows that $a_{2i}=0$. 
Putting $a_1=1$ and $b=2$ into Lemma~\ref{prop}(i) gives $a_3 = (q+1) - 2a_{2r}$. 
Since $q$ is even, $(q+1) - 2a_{2r}$ is odd, so $a_3 \geqslant 1$.
\end{proof}

If follows that we are left with at most two $K$-orbits, corresponding to the cases (i) and (ii). 
We label these orbits as $\Omega_1$ and $\Omega_2$ respectively and choose representatives for them as 
\begin{equation} \label{S1S2reps}
\Omega_1:\begin{bmatrix} x&y&z\\y&t&\cdot\\z&\cdot&t \end{bmatrix}, 
\quad
\Omega_2:\begin{bmatrix} x&y&z\\y&t&\cdot\\z&\cdot&\cdot \end{bmatrix},
\end{equation}
obtained by taking $\cL_2 = \cZ(X_1)$ and $\cL_3 = \cZ(X_2)$ in both cases, $\cL_1 = \cZ((X_1+X_2)^2)$ for $\Omega_1$ and $\cL_1 = \cZ(X_2^2)$ for $\Omega_2$. 
We now calculate the point-orbit distributions, hyperplane-orbit distributions, and stabilisers of the solids in these $K$-orbits. 
We may use the representatives given in \eqref{S1S2reps} for these calculations, since all of the aforementioned data are $K$-invariant. 
We begin with the hyperplane-orbit distributions, verifying in particular the desired condition that each solid lies in a unique hyperplane of type $\cH_1$, and that the orbits $\Omega_1$ and $\Omega_2$ are indeed distinct (since their hyperplane-orbit distributions are distinct). 

\begin{lemma} \label{12dists}
The hyperplane-orbit distribution of a solid of type $\Omega_1$ is $[1,q/2,q/2,0]$. 
The hyperplane-orbit distribution of a solid of type $\Omega_2$ is $[1,q,0,0]$. 
In particular, $\Omega_1 \neq \Omega_2$. 
\end{lemma}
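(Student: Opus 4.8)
The plan is to exploit the correspondence between hyperplanes of $\PG(5,q)$ through $S$ and conics of the pencil $\cP(S)$. Since $\delta$ is a projective duality between conics and hyperplanes, the $q+1$ hyperplanes containing $S=\delta(\cP(S))$ are exactly the images $\delta(\cC)$ of the $q+1$ members $\cC$ of the pencil. Computing $\operatorname{OD}_{K,4}(S)$ therefore reduces to classifying each conic in $\cP(S)$ as a double line, a pair of real lines, a pair of imaginary lines, or a nonsingular conic, and then tallying; for this I would use the representatives in \eqref{S1S2reps}, since the distribution is $K$-invariant.

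For $\Omega_1$ I would write a general member of the pencil as $\lambda(X_1+X_2)^2+\mu X_1X_2=\lambda X_1^2+\mu X_1X_2+\lambda X_2^2$ (using characteristic $2$), parameterised by $(\lambda:\mu)\in\PG(1,q)$. The criterion of Lemma~\ref{pencils} shows at once that every member is singular, so $a_3=0$. The member $(\lambda:\mu)=(0:1)$ is $X_1X_2$, a pair of distinct real lines, while $(\lambda:\mu)=(1:0)$ is $(X_1+X_2)^2$, the unique double line, giving $a_1=1$. For the remaining $q-1$ members I normalise $\lambda=1$ with $\mu\neq 0$, dehomogenise $X_1^2+\mu X_1X_2+X_2^2$ at $X_2=1$, and apply Lemma~\ref{quadratic} with $\alpha=\gamma=1$, $\beta=\mu$: the form splits into distinct real factors or is irreducible (an imaginary pair) according as $\operatorname{Tr}(\mu^{-2})=\operatorname{Tr}(\mu^{-1})$ equals $0$ or $1$, the equality of the two traces being Frobenius-invariance of $\operatorname{Tr}$. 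To finish the count I use that $\operatorname{Tr}$ is $\bF_2$-linear with kernel of size $q/2$; since $0$ has trace $0$, there are $q/2-1$ nonzero elements of trace $0$ and $q/2$ of trace $1$. As $\mu$ ranges over $\bF_q^*$ so does $\mu^{-1}$, contributing $q/2-1$ further real pairs and $q/2$ imaginary pairs; together with the real pair at $\lambda=0$ this yields $a_{2r}=q/2$ and $a_{2i}=q/2$, so the distribution is $[1,q/2,q/2,0]$.

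For $\Omega_2$ the argument is shorter: every member $\lambda X_2^2+\mu X_1X_2=X_2(\lambda X_2+\mu X_1)$ carries the common linear factor $X_2$ over $\bF_q$, so it is never nonsingular and never an imaginary pair. The member $\mu=0$ is the double line $X_2^2$, while the remaining $q$ members ($\mu\neq 0$, normalise $\mu=1$, $\lambda\in\bF_q$) are pairs of distinct real lines, giving $[1,q,0,0]$. Comparing $[1,q/2,q/2,0]$ with $[1,q,0,0]$ (distinct for all even $q$) establishes $\Omega_1\neq\Omega_2$.

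I expect the only genuinely delicate point to be the counting for $\Omega_1$: pinning down the trace condition $\operatorname{Tr}(\mu^{-1})$ that governs the real/imaginary dichotomy, and accounting for the fact that $0$ lies in the trace-zero class, so that the two classes of nonzero elements have sizes $q/2-1$ and $q/2$ rather than both $q/2$. Everything else is a direct application of Lemmas~\ref{pencils} and~\ref{quadratic} together with the $\delta$-correspondence.
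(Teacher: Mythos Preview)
Your argument is correct. The one difference worth noting is that the paper's own proof avoids the explicit trace enumeration by invoking Lemma~\ref{prop}: having checked via Lemma~\ref{pencils} that $a_1=1$ and $a_3=0$, it reads off the number $b$ of base points of each pencil (one for $\cP(S_1)$, namely the point of concurrency of $\cL_1,\cL_2,\cL_3$; and $q+1$ for $\cP(S_2)$, namely the line $\cL_1$), and then the two identities in Lemma~\ref{prop} solve for $a_{2r}$ and $a_{2i}$ directly. Your approach instead classifies each binary form $X_1^2+\mu X_1X_2+X_2^2$ by the trace criterion of Lemma~\ref{quadratic} and counts the nonzero elements of each trace value. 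Both are perfectly valid; the paper's route is slightly slicker in that it bypasses the trace-kernel count, while yours is more self-contained and does not depend on the double-counting lemma. Your handling of the delicate point you flag (the $q/2-1$ versus $q/2$ split among nonzero field elements, compensated by the extra real pair at $\lambda=0$) is accurate.
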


\begin{proof}
Let $S_i$ denote the representative of $\Omega_i$ defined in \eqref{S1S2reps}, for $i \in \{1,2\}$. 
Lemma~\ref{pencils} implies that each of the pencils $\cP(S_i)$ does indeed contain a unique double line (namely $\cL_1^2$) and no nonsingular conics. 
Hence, in the notation of Lemma~\ref{prop}, the hyperplane-orbit distribution of $S_i$ has the form $[1,a_{2r},a_{2i},0]$ in both cases, i.e. $a_1=1$ and $a_3=0$. 
The pencil $\cP(S_1)$ has a unique base point (the unique point of concurrency of the three lines $\cL_1$, $\cL_2$ and $\cL_3$), so putting $b=1$ into Lemma~\ref{prop} yields $a_{2r}=a_{2i}=q/2$.
On the other hand, $\cP(S_2)$ has $q+1$ base points (those on the line $\cL_1$), so $a_{2r}=q$ and $a_{2i}=0$.
\end{proof}

\begin{lemma}
The point-orbit distribution of a solid of type $\Omega_1$ is $[1,q+1,2q^2-1,q^3-q^2]$. 
The point-orbit distribution of a solid of type $\Omega_2$ is $[q+1,q+1,2q^2-q-1,q^3-q^2]$. 
\end{lemma}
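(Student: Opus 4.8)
Since each entry of the point-orbit distribution $[r_1,r_{2n},r_{2s},r_3]$ is a $K$-invariant, the plan is to compute it using the explicit representatives $S_1,S_2$ from \eqref{S1S2reps}. A solid contains $q^3+q^2+q+1$ points in total, so it suffices to pin down three of the four entries and recover the last by subtraction. Concretely, I would obtain $r_1$ from the base-point count, $r_3$ from the cubic $\cZ(\det M)$, and $r_{2n}$ by intersecting the solid with the nucleus plane; then $r_{2s}$ follows.

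For $r_1$: by Lemma~\ref{baseLemma} the rank-$1$ points of $S_i$ are the images of the base points of $\cP(S_i)$, so $r_1=b$. The proof of Lemma~\ref{12dists} already records $b=1$ for $\Omega_1$ and $b=q+1$ for $\Omega_2$, which gives the leading entries $1$ and $q+1$. For $r_3$: I would expand the determinant of the representing matrix, obtaining $\det M = t(xt+y^2+z^2)$ for $\Omega_1$ and $\det M = tz^2$ for $\Omega_2$ (char $2$). Since the rank-$\leqslant 2$ locus is $\cZ(\det M)$, one has $r_3=(q^3+q^2+q+1)-|\cZ(\det M)|$. For $\Omega_2$ this is routine: $\cZ(tz^2)$ is the union of the planes $t=0$ and $z=0$, meeting in a line, giving $2(q^2+q+1)-(q+1)$ points by inclusion--exclusion. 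For $\Omega_1$ the cubic splits as the plane $t=0$ together with the quadric $Q=\cZ(xt+y^2+z^2)$, and here the main work is counting $Q$.

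To count $Q$ I would use the characteristic-$2$ identity $y^2+z^2=(y+z)^2$ and count affine zeros of $xt+(y+z)^2$ directly: fixing $y,z$ and solving $xt=(y+z)^2$ yields $q-1$ pairs $(x,t)$ when $y\neq z$ and $2q-1$ pairs when $y=z$, for a total of $q^3$ affine zeros, hence $q^2+q+1$ projective points. Thus $Q$ is a quadric cone rather than a nonsingular quadric. The plane $t=0$ and the cone $Q$ meet where $y=z,\ t=0$, a line of $q+1$ points, so inclusion--exclusion again gives the rank-$\leqslant 2$ count $2(q^2+q+1)-(q+1)=2q^2+q+1$, whence $r_3=q^3-q^2$ in both cases.

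For $r_{2n}$: the nucleus plane consists of the zero-diagonal symmetric matrices, so $S_i\cap\pi_n$ is cut out by $x=t=0$; in both cases this leaves the $q+1$ points $(0:y:z:0)$ with $(y,z)\neq(0,0)$. Each such matrix $\left[\begin{smallmatrix}0&y&z\\ y&0&0\\ z&0&0\end{smallmatrix}\right]$ has a nonvanishing $2\times 2$ minor (either $y^2$ or $z^2$) but lies on $\cZ(\det M)$, so it has rank exactly $2$; hence $r_{2n}=q+1$ for both orbits. Finally $r_{2s}=(q^3+q^2+q+1)-r_1-r_{2n}-r_3$ gives $2q^2-1$ for $\Omega_1$ and $2q^2-q-1$ for $\Omega_2$, completing the two distributions. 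I expect the quadric-cone count for $\Omega_1$ to be the only genuinely delicate step: the identity $y^2+z^2=(y+z)^2$ is precisely what makes the relevant quadric degenerate, so it must be counted by hand rather than by invoking a generic nonsingular-quadric point count.
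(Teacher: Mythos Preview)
Your proposal is correct and follows essentially the same approach as the paper: compute the cubic surface $\Psi(S_i)=\cZ(\det M)$, count its points to obtain the number of rank $\leqslant 2$ points, and intersect the solid with $\cV(\bF_q)$ and with the nucleus plane to split off $r_1$ and $r_{2n}$. The paper simply asserts that $\Psi(S_1)=\cZ(XT^2+Y^2T+Z^2T)$ has $2q^2+q+1$ points, whereas you justify this count explicitly via the decomposition into the plane $T=0$ and the quadric cone $\cZ(XT+(Y+Z)^2)$; this extra detail is helpful but does not constitute a different argument.
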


\begin{proof}
Consider again the representatives $S_1$ and $S_2$ in \eqref{S1S2reps}. 
Points of rank at most $2$ in $S_1$ correspond to points on the cubic surface $\Psi(S_1)=\cZ(XT^2+Y^2T+Z^2T)$. 
There are $2q^2+q+1$ such points, exactly one of which has rank $1$, namely the point with homogeneous coordinates $(X,Y,Z,T) = (1,0,0,0)$, which is the image under $\nu$ of the unique base point $(X_0,X_1,X_2) = (1,0,0)$ of the pencil $\cP(S_1)$ (cf. Lemma~\ref{baseLemma}). 
Hence, the rank distribution of $S_1$ is $[1,2q^2+q,q^3-q^2]$.
The points of $S_1$ contained in the nucleus plane are those on the line $\cZ(X,T)$, so the point-orbit distribution of $S_1$ is $[1,q+1,2q^2-1,q^3-q^2]$. 
The cubic surface $\Psi(S_2)$ is $\cZ(Z^2T)$, which contains $2q^2+q+1$ points, being the union of two planes meeting in a line. 
It intersects $\cV(\bF_q)$ in the conic $\cZ(Z, XT+Y^2)$, and the nucleus plane in the line $\cZ(X,T)$.
\end{proof}

In the following lemma (and elsewhere in the paper), $E_q$ denotes an elementary abelian group of order~$q$, and $E_q^{1+2}$ is a group with centre $Z \cong E_q$ such that $E_q^{1+2}/Z \cong E_q^2$. 

\begin{lemma}
If $S_1 \in \Omega_1$ then $K_{S_1} \cong E_q^2 : (E_q \times C_{q-1})$. 
If $S_2 \in \Omega_2$ then $K_{S_2} \cong E_q^{1+2} : C_{q-1}^2$. 
\end{lemma}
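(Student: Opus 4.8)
The plan is to compute each stabiliser by passing to the pencil-of-conics picture, exploiting the transitivity of $\PGL(3,q)$ on $\PG(2,q)$ as flagged in the introduction, and then to confirm the abstract structure by a short matrix check with the representatives in \eqref{S1S2reps}. Throughout I identify $K_{S_i}$ with the subgroup of $\PGL(3,q)$ stabilising the pencil $\cP(S_i)$ setwise, equivalently (via the correspondence $\alpha(\phi_A):M\mapsto AMA^T$) with the matrices $A\in\GL(3,q)$ satisfying $A\,S_i\,A^T=S_i$ as a space of symmetric matrices, taken modulo scalars.

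For $\Omega_2$ I would first isolate the canonical sub-objects of the pencil $\cP(S_2)=\langle X_2^2,X_1X_2\rangle$. Every member factors as $X_2(\lambda X_2+\mu X_1)$, so the line $\cZ(X_2)$ is a common component and the residual factors range over all lines through the point $P_0=(1,0,0)$; hence $\cZ(X_2)$ and the incident point $P_0\in\cZ(X_2)$ are both fixed by any stabilising projectivity. This forces $K_{S_2}$ into the stabiliser of the flag $P_0\subset\cZ(X_2)$, a Borel subgroup of $\PGL(3,q)$ of order $q^3(q-1)^2$. Conversely, a direct substitution shows that every upper-triangular $A$ preserves $\cP(S_2)$, so equality holds and $K_{S_2}$ is exactly this Borel subgroup, namely $U\rtimes T$ with unipotent radical $U\cong E_q^{1+2}$ (the group of upper-unitriangular matrices) and maximal torus $T\cong C_{q-1}^2$, giving $K_{S_2}\cong E_q^{1+2}:C_{q-1}^2$.

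For $\Omega_1$ the pencil $\cP(S_1)=\langle(X_1+X_2)^2,X_1X_2\rangle$ has unique base point $P_0=(1,0,0)$ (cf. Lemma~\ref{baseLemma}), so $K_{S_1}$ lies in the parabolic stabiliser of $P_0$; writing $A=\left[\begin{smallmatrix}a&u^T\\0&B\end{smallmatrix}\right]$ with $B\in\GL(2,q)$ acting on the coordinates $(X_1,X_2)$, this parabolic is $E_q^2\rtimes(\GL(1,q)\times\GL(2,q))$ modulo scalars. The residual constraint is that $B$ must preserve the induced pencil of binary quadratics $\langle X_1^2+X_2^2,\,X_1X_2\rangle$ on the line-pencil through $P_0$; substituting $X_1\mapsto pX_1+rX_2$, $X_2\mapsto rX_1+pX_2$ and comparing coefficients (in characteristic $2$) pins $B$ down to the symmetric matrices $\left[\begin{smallmatrix}p&r\\r&p\end{smallmatrix}\right]$ with $p\neq r$. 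I would then check that the unipotent radical $E_q^2$ (the translations mixing $X_0$ into $X_1,X_2$) and the scalar $a$ impose no further conditions, by verifying directly that these two families preserve $S_1$. Collecting the pieces gives $K_{S_1}\cong E_q^2:(E_q\times C_{q-1})$ of order $q^3(q-1)$.

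The main obstacle is identifying the abstract isomorphism types once the matrix groups are in hand. In particular, I expect the delicate point to be recognising the Levi factor $\{\left[\begin{smallmatrix}p&r\\r&p\end{smallmatrix}\right]:p\neq r\}$ as $E_q\times C_{q-1}$: in characteristic $2$ the relation $\omega^2=1$ is the same as $(\omega+1)^2=0$, so this group is the unit group of the dual numbers $\bF_q[\delta]/(\delta^2)$, which splits as $\bF_q^*\times(1+\delta\bF_q)\cong C_{q-1}\times E_q$. Verifying that the displayed semidirect-product decompositions are genuine (i.e.\ that the complements split off as claimed) is the remaining bookkeeping. As a consistency check, the resulting orders $q^3(q-1)$ and $q^3(q-1)^2$ reproduce, via orbit--stabiliser, the orbit sizes $(q^3-1)(q+1)$ and $(q^2+q+1)(q+1)$ recorded in Table~\ref{invariantsTable}.
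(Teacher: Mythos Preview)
Your argument is correct and arrives at the same matrix groups as the paper, but by a different route. The paper works in $\PG(5,q)$: for $\Omega_1$ it observes that $K_{S_1}$ fixes the unique rank-$1$ point $P$ and hence the tangent plane $\pi$ to $\mathcal{V}(\bF_q)$ at $P$, so that (writing $S_1=\langle\pi,Q\rangle$) the stabiliser is determined by the orbit of the residual point $Q$; the Levi factor is then read off geometrically from the fact that $Q$ is external to, and not the nucleus of, a conic in a complementary plane, whose stabiliser in $\GL(2,q)$ is $E_q\times C_{q-1}$. For $\Omega_2$ the paper uses the conic $S_2\cap\mathcal{V}(\bF_q)$ and the line $S_2\cap\pi_n$ in the nucleus plane as the invariant sub-objects, arriving at the upper-triangular condition. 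You instead stay entirely in $\PG(2,q)$: for $\Omega_2$ you spot the invariant flag $(1,0,0)\subset\cZ(X_2)$ and recognise the Borel subgroup in one step; for $\Omega_1$ you reduce to preserving the pencil of binary quadratics $\langle X_1^2+X_2^2,\,X_1X_2\rangle$ and then identify the resulting group $\{\left[\begin{smallmatrix}p&r\\r&p\end{smallmatrix}\right]:p\neq r\}$ with the units of $\bF_q[\delta]/(\delta^2)$, which is an elegant algebraic substitute for the paper's external-point argument. Both approaches are complete; the paper's illustrates the $\PG(5,q)$ viewpoint emphasised in the introduction, while yours is more self-contained and makes the Borel structure for $\Omega_2$ especially transparent. (One cosmetic slip: your ``translations mixing $X_0$ into $X_1,X_2$'' should read the other way round, but the computation is unaffected.)
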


\begin{proof}
If $S_1$ is the representative of $\Omega_1$ given in \eqref{S1S2reps} then $K_{S_1} \leqslant K_P$, where $P=(1,0,0,0)$ is the unique point of rank~$1$ in $S_1$. 
Notice that $K_P$ is equal to the stabiliser of the plane $\pi = \mathcal{Z}(T)$, because $\pi$ is the tangent plane to $\mathcal{V}(\mathbb{F}_q)$ at $P$. 
An element of $K_P$ therefore fixes $S_1$ if and only if it maps the point $Q=(0,0,0,1)$ into $S_1$, since $S_1 = \langle \pi,Q \rangle$. 
Elements of $K_P \cong E_q^2 : \GL(2,q)$ are represented by matrices $g=(g_{ij}) \in \GL(3,q)$ with $g_{21}=g_{31}=0$. 
The subgroup $H \cong \GL(2,q)$ of $K_P$ obtained by setting $g_{12}=g_{13}=0$ fixes the conic $\mathcal{Z}(Y_3^2+Y_4Y_5)$ in the plane $\pi' = \cZ(Y_0,Y_1,Y_2)$. 
Since $Q$ is a point external to this conic and distinct from its nucleus, it follows by considering the quotient space of $\pi'$ that $K_{S_1} \cong E_q^2 : (E_q \times C_{q-1})$.
The solid $S_2 \in \Omega_2$ given in \eqref{S1S2reps} meets $\mathcal{V}(\mathbb{F}_q)$ in a conic which spans the plane $\pi : \cZ(Z)$, so $K_{S_2} \leqslant K_\pi$. 
An element of $K$ represented by a matrix $(g_{ij}) \in \GL(3,q)$ belongs to $K_\pi$ if and only $g_{31}=g_{32}=0$. 
It fixes $S_2$ if and only if it also fixes the line $\cZ(X,T)$ in which $S_2$ intersects the nucleus plane. 
This occurs if and only if $g_{21}$ is also $0$. 
Upon factoring out scalars we therefore obtain $K_{S_2} \cong E_q^{1+2} : C_{q-1}^2$. 
\end{proof}

If $\cP(S)$ contains more than one double line, then it is a pencil of lines. 
There is one $K$-orbit of such solids, which we call $\Omega_3$. 
Generating $\cP(S)$ by the double lines $\cZ(X_1^2)$ and $\cZ(X_2^2)$ gives the representative
\[
\Omega_3:\begin{bmatrix} x&y&z\\y&\cdot&t\\z&t&\cdot \end{bmatrix}.
\]

\begin{lemma} \label{3data}
A solid $S_3 \in \Omega_3$ has point-orbit distribution $[1,q^2+q+1,q^2-1,q^3-q^2]$, hyperplane-orbit distribution $[q+1,0,0,0]$, and stabiliser $K_{S_3} \cong E_q^2 : \GL(2,q)$. 
In particular, $\Omega_3 \not \in \{\Omega_1,\Omega_2\}$. 
\end{lemma}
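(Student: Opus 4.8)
The plan is to compute the three data for the representative
\[
S_3:\begin{bmatrix} x&y&z\\y&\cdot&t\\z&t&\cdot \end{bmatrix}
\]
in turn, exploiting both the solid and pencil points of view exactly as in the preceding lemmas. Since all three quantities are $K$-invariant, working with this single representative suffices.

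\medskip

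\noindent\textbf{Hyperplane-orbit distribution.} I would start here, as it is the easiest. The pencil $\cP(S_3)$ is generated by the two double lines $\cZ(X_1^2)$ and $\cZ(X_2^2)$, so it is precisely the pencil of (double) lines through the point $(X_0,X_1,X_2)=(1,0,0)$; a general member is $\cZ((\lambda X_1 + \mu X_2)^2)$, which is a double line for every $(\lambda,\mu)$. Hence every one of the $q+1$ conics in the pencil is a double line, giving hyperplane-orbit distribution $[q+1,0,0,0]$. I would confirm this via Lemma~\ref{pencils}: for the form $\lambda^2 X_1^2 + \mu^2 X_2^2$ (with $a_{11}=\lambda^2$, $a_{22}=\mu^2$ and all other coefficients zero) the invariant $a_{00}a_{12}^2+a_{11}a_{02}^2+a_{22}a_{01}^2+a_{01}a_{02}a_{12}$ vanishes identically, so no member is nonsingular, and one checks directly that each is a double line rather than a pair of distinct or imaginary lines. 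This immediately yields $\Omega_3\notin\{\Omega_1,\Omega_2\}$, since the hyperplane-orbit distributions in Lemma~\ref{12dists} are $[1,q/2,q/2,0]$ and $[1,q,0,0]$.

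\medskip

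\noindent\textbf{Point-orbit distribution.} The cubic surface $\Psi(S_3)$ is the zero locus of the determinant of the generic matrix, namely $\cZ(XT^2)$ (up to sign, irrelevant in characteristic $2$). Thus $\Psi(S_3)$ is the union of the plane $\cZ(X)$ and the repeated plane $\cZ(T)$, and I would count its points as $2q^2+q+1$, being two planes meeting in the line $\cZ(X,T)$. Among these, the rank-$1$ points form $S_3\cap\cV(\bF_q)$, which by Lemma~\ref{baseLemma} is the image under $\nu$ of the base of $\cP(S_3)$; since the base consists of the single point $(1,0,0)$, there is exactly one rank-$1$ point, namely $(X,Y,Z,T)=(1,0,0,0)$. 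This gives rank distribution $[1,2q^2+q,q^3-q^2]$. To split the rank-$2$ count into nucleus and non-nucleus parts, I would intersect $S_3$ with the nucleus plane, i.e. impose $X=T=0$ (zero diagonal), obtaining the line $\cZ(X,T)$ of $q+1$ points, all of rank $2$. Subtracting and reassembling then yields point-orbit distribution $[1,(q+1)+(q^2-q),q^2-1,q^3-q^2]=[1,q^2+q+1,q^2-1,q^3-q^2]$, after carefully checking which of the $q+1$ nucleus-plane points are counted (all of them have rank $2$) and confirming the remaining rank-$2$ points lie in conic planes off $\pi_n$.

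\medskip

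\noindent\textbf{Stabiliser.} I expect this to be the main obstacle, as in the earlier lemmas. The natural approach is to observe that $S_3$ meets $\cV(\bF_q)$ in the single point $P=(1,0,0,0)$, so $K_{S_3}\leqslant K_P$, and $K_P$ equals the stabiliser of the tangent plane $\pi=\cZ(T)$ at $P$. I would represent $K_P\cong E_q^2:\GL(2,q)$ by matrices $(g_{ij})\in\GL(3,q)$ with $g_{21}=g_{31}=0$, and then impose the condition that the induced projectivity preserve $S_3$. Geometrically, $S_3=\langle\pi,\ell\rangle$ where $\ell=\cZ(X,T)$ is the nucleus line lying in $\pi$; alternatively, working on the pencil side, $K_{S_3}$ is the image under $\alpha$ of the stabiliser in $\PGL(3,q)$ of the pencil of double lines through $(1,0,0)$, which is the full stabiliser of that point together with the action on the line-pencil. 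Since $\PGL(3,q)$ fixing $(1,0,0)$ has the parabolic structure $E_q^2:\GL(2,q)$ and the entire pencil is determined by this point, I anticipate the stabiliser condition imposes no further constraint, giving $K_{S_3}\cong E_q^2:\GL(2,q)$ of order $q^2\cdot|\GL(2,q)|$. I would verify this by a direct computation of $AM_PA^T$ on the generators of $S_3$, checking that every element of $K_P$ fixes $S_3$ setwise, and cross-check against the orbit size in Table~\ref{invariantsTable}, namely $q^2+q+1=|K|/|K_{S_3}|$, which confirms the stabiliser order.
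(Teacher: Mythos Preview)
Your hyperplane-orbit distribution and stabiliser arguments are correct and match the paper's approach (the paper is even terser on the stabiliser, simply noting it is ``immediate from the hyperplane-orbit distribution'', i.e.\ a pencil consisting entirely of double lines through a point is fixed exactly by the point stabiliser $E_q^2:\GL(2,q)$).

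There is, however, a genuine error in your point-orbit computation. In the representative
\[
S_3:\begin{bmatrix} x&y&z\\y&\cdot&t\\z&t&\cdot \end{bmatrix}
\]
the diagonal entries are $(x,0,0)$, not $(x,\ldots,t)$: the parameter $t$ sits in the off-diagonal position $(2,3)$. Hence the condition ``zero diagonal'' is $x=0$ alone, so $S_3\cap\pi_n$ is the entire plane $\cZ(X)$ with $q^2+q+1$ points, not the line $\cZ(X,T)$. (Indeed, the solid $S_3$ \emph{contains} the full nucleus plane of $\PG(5,q)$; this is exactly how the paper phrases it: ``$\Psi(S_3)$ is the union of the nucleus plane $\cZ(X)$ and the plane $\cZ(T)$''.) You seem to have pattern-matched from $S_1$ and $S_2$, where $t$ does appear on the diagonal and the nucleus intersection is $\cZ(X,T)$.

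This error propagates: your line ``$[1,(q+1)+(q^2-q),q^2-1,q^3-q^2]=[1,q^2+q+1,q^2-1,q^3-q^2]$'' is arithmetically false, since $(q+1)+(q^2-q)=q^2+1$. The correct split is $r_{2n}=q^2+q+1$ (the whole plane $\cZ(X)$, none of whose points have rank~$1$) and $r_{2s}=(2q^2+q)-(q^2+q+1)=q^2-1$, which does give the stated distribution. So the target is right, but your route to it needs fixing.
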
 
 
\begin{proof}
Let $S_3$ denote the above representative of $\Omega_3$. 
Since all conics in the pencil $\cP(S_3)$ are double lines, the hyperplane-orbit distribution of $S_3$ is $[q+1,0,0,0]$. 
This implies that $\Omega_3 \not \in \{\Omega_1,\Omega_2\}$ (upon comparing with Lemma~\ref{12dists}). 
The cubic surface $\Psi(S_3)$ is the union of the nucleus plane $\cZ(X)$ and the plane $\cZ(T)$. 
It contains exactly one point of rank~$1$, namely the point $P=(1,0,0,0)$. 
Therefore, we obtain the asserted point-orbit distribution. 
The stabiliser is immediate from the hyperplane-orbit distribution. 
\end{proof}

\subsection{Solids not contained in a hyperplane of type $\mathcal{H}_1$}

Next we classify the solids contained neither in hyperplanes of type $\mathcal{H}_3$, nor in hyperplanes of type of $\mathcal{H}_1$.
Let $S$ be such a solid, namely one with with $\operatorname{OD}_{K,4}(S)=[0,a_{2r},a_{2i},0]$. 
Since we are assuming that $q>2$, it follows from Lemma~\ref{prop}(i) that $a_{2r} \geqslant 2$.
Hence, there exist two pairs $\cL_1\cL_2$ and $\cL_3\cL_4$ of distinct real lines generating $\cP(S)$.
There a number of possible configurations of the lines $\cL_1,\ldots,\cL_4$, but it turns out that only one of these gives a $K$-orbit with the assumed hyperplane-orbit distribution.

\begin{lemma}
There is a unique $K$-orbit of solids with hyperplane-orbit distribution $[0,a_{2r},a_{2i},0]$.
\end{lemma}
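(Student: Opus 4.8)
The plan is to extract a parity constraint from Lemma~\ref{prop} and then run a short configuration analysis on the four lines $\cL_1,\dots,\cL_4$. Writing $b$ for the number of base points of $\cP(S)$ (equivalently, by Lemma~\ref{baseLemma}, the number of rank-$1$ points of $S$), Lemma~\ref{prop}(i) with $a_1=a_3=0$ reads $2a_{2r}=q+b$; since $q$ is even, this forces $b$ to be \emph{even}. This single observation will do most of the work. Since $a_{2r}\geqslant 2$ we have the two generating pairs $\cL_1\cL_2$ and $\cL_3\cL_4$ of distinct real lines, and the condition $a_3=0$ says that \emph{every} member of $\cP(S)$ is singular; the strategy is to decide which configurations of $\cL_1,\dots,\cL_4$ are simultaneously compatible with $a_3=0$ and with $b$ even.

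First I would dispose of the configurations in which the two pairs share \emph{no} common line. Then the base locus $(\cL_1\cup\cL_2)\cap(\cL_3\cup\cL_4)$ is the finite set of intersection points $\cL_i\cap\cL_j$ with $i\in\{1,2\}$, $j\in\{3,4\}$, and a short incidence count shows $b\in\{1,3,4\}$: the value $b=4$ occurs precisely when these four points are distinct, and then they are automatically in general position (each $\cL_k$ contains exactly two of them, so no three are collinear), while coincidences force three or four of the lines to be concurrent, giving $b=3$ or $b=1$. When $b=4$ the pencil through four points in general position has exactly three singular members, so $a_3=q-2>0$, contradicting $a_3=0$; and the values $b\in\{1,3\}$ are odd, contradicting the parity of $b$. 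Hence $\cL_1\cL_2$ and $\cL_3\cL_4$ must share a common line, say $\cL$.

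With the common line $\cL$ in hand, I would write the two generators as $\cL\,\cL_2$ and $\cL\,\cL_4$, so that every member of $\cP(S)$ has the form $\cL\cup\cM$, where $\cM$ ranges over the pencil of lines through the point $R=\cL_2\cap\cL_4$; the base locus is then $\cL\cup\{R\}$. If $R\in\cL$ then the base locus is $\cL$, so $b=q+1$ is odd (and moreover $\cL$ itself occurs as a member, a double line, forcing $a_1\geqslant 1$), again a contradiction. Therefore $R\notin\cL$, which gives $b=q+2$ (even, and consistent with $a_{2r}=q+1$, $a_{2i}=0$); in this case $\cM\neq\cL$ for every member, so each $\cL\cup\cM$ is a genuine pair of distinct real lines, consistent with $a_1=a_3=0$. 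Thus $S$ is completely determined by the anti-flag $(\cL,R)$ with $R\notin\cL$, and conversely each such anti-flag produces a solid of this type. Since $\PGL(3,q)$ acts transitively on anti-flags (non-incident point--line pairs) of $\PG(2,q)$, all such solids lie in a single $K$-orbit, which proves the lemma.

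The main obstacle I anticipate is making the no-common-line step genuinely exhaustive: one must check that two line-pairs in that situation yield $b\in\{1,3,4\}$ and no other value (in particular ruling out $b=2$, which would survive the parity test), and that $b=4$ really does produce nonsingular members. The parity constraint $b\equiv 0\pmod 2$ eliminates the odd cases cleanly, so the delicate part is simply verifying that the four intersection points degenerate only in the ways described and that the $b=4$ case has three, not $q+1$, singular members.
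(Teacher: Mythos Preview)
Your argument is correct and reaches the same antiflag conclusion as the paper, but by a different and arguably cleaner route. The paper runs a direct configuration analysis: all four lines concurrent forces $S\in\Omega_1$ (so $a_1\neq 0$); one of the lines being the full base forces $S\in\Omega_2$ (again $a_1\neq 0$); and if the two pairs meet in three or four points, the paper appeals (forward) to the treatment of $\Omega_8$ and $\Omega_9$ to see that a nonsingular conic appears. What remains is precisely the shared-line, non-concurrent situation, i.e.\ the antiflag. Your approach instead extracts the parity constraint $2a_{2r}=q+b$ from Lemma~\ref{prop}(i), which kills the odd values $b\in\{1,3\}$ and $b=q+1$ in one stroke and leaves only $b=4$ to be dispatched by the standard fact that the pencil through a frame has exactly three singular members. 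This avoids the forward reference to $\Omega_8$, $\Omega_9$ and makes the proof self-contained at this point; the trade-off is that you must verify the small incidence claim $b\in\{1,3,4\}$ in the no-common-line case, which you do (and your justification that $b=4$ implies general position is sound: any three of the $P_{ij}$ collinear would force two of them to coincide). Both proofs rely on the hypothesis $q>2$ only to ensure $a_{2r}\geqslant 2$ and, in the $b=4$ step, that $q-2>0$.
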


\begin{proof}
If the four lines $\cL_1,\ldots,\cL_4$ are concurrent, then $S\in \Omega_1$. 
If $\cP(S)$ has one of the lines as its base, then $S\in\Omega_2$ since $\cP(S)$ then also contains that base as a double line.
If the two pairs $\cL_1\cL_2$ and $\cL_3\cL_4$ meet in either three or four points, then $\cP(S)$ contains at least one nonsingular conic (and so $a_3 \neq 0$): this can be verified by a direct computation, and also follows from the treatment of the orbits $\Omega_8$ and $\Omega_9$ in Section~\ref{kk=22}.
The only remaining possibility is that the two pairs share a line and do not meet in the same point, in which case the base of $\cP(S)$ is an antiflag, consisting of the shared line and one extra point. 
Since $\PGL(3,q)$ acts transitively on antiflags, there is one such $K$-orbit of solids.
\end{proof}

The $K$-orbit of solids arising as above is denoted $\Omega_4$. 
Taking $\cP(S)$ generated by the pairs of real lines $\cZ(X_0X_1)$ and $\cZ(X_1X_2)$ gives the representative
\[
\Omega_4: \begin{bmatrix} x&\cdot&y\\\cdot&z&\cdot\\y&\cdot&t \end{bmatrix}.
\]

\begin{lemma}\label{s4}
A solid $S_4 \in \Omega_4$ has point-orbit distribution $[q+2,1,2q^2-2,q^3-q^2]$, hyperplane-orbit distribution $[0,q+1,0,0]$, and stabiliser $K_{S_4} \cong \GL(2,q)$. 
In particular, $\Omega_4 \not \in \{\Omega_1,\Omega_2,\Omega_3\}$. 
\end{lemma}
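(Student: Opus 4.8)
The plan is to compute the three invariants of the representative $S_4$ directly, exploiting the fact that they are $K$-invariant so that the specific representative suffices. First I would compute the hyperplane-orbit distribution. Since $\cP(S_4)$ is generated by the two pairs of real lines $\cZ(X_0X_1)$ and $\cZ(X_1X_2)$, a general member of the pencil is $\cZ(\lambda X_0X_1 + \mu X_1X_2) = \cZ(X_1(\lambda X_0 + \mu X_2))$, which is always a pair of real lines (with $X_1$ a common component). Applying Lemma~\ref{pencils} to confirm that no member is nonsingular and checking that each member genuinely is a pair of distinct real lines (never a double line, using that $X_1$ and $\lambda X_0+\mu X_2$ are distinct) yields $\operatorname{OD}_{K,4}(S_4)=[0,q+1,0,0]$. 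This immediately distinguishes $\Omega_4$ from $\Omega_1,\Omega_2,\Omega_3$ by comparison with Lemmas~\ref{12dists} and~\ref{3data}.

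Next I would compute the point-orbit distribution. The cubic surface $\Psi(S_4)$ is obtained by setting the determinant of the representing matrix to zero; for $S_4$ the determinant is $z(xt - y^2)$, so $\Psi(S_4)=\cZ(Z(XT-Y^2))$, the union of the plane $\cZ(Z)$ and the quadric cone $\cZ(XT-Y^2)$. I would count the rank-$\leqslant 2$ points on this surface and separate out the rank-$1$ points. By Lemma~\ref{baseLemma}, the rank-$1$ points are the images of the base points of $\cP(S_4)$; since the base is an antiflag (the line $\cZ(X_1)$ together with one extra point), I expect $q+2$ base points and hence $q+2$ rank-$1$ points. To split the rank-$2$ points between the nucleus plane and its complement, I would intersect $\Psi(S_4)$ with the nucleus plane condition (zeros on the diagonal, i.e. $x=t=0$ in the representation), expecting a single such point, giving $r_{2n}=1$ and the remaining rank-$2$ points as $r_{2s}$. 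Assembling these counts should produce $[q+2,1,2q^2-2,q^3-q^2]$.

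For the stabiliser, I would work on the pencil side, as the paper suggests, appealing to transitivity properties of $\PGL(3,q)$ on $\PG(2,q)$. The base of $\cP(S_4)$ is the antiflag consisting of the line $\cL=\cZ(X_1)$ and the point $R=\cZ(X_0)\cap\cZ(X_2)=(0,1,0)$ off $\cL$; the stabiliser $K_{S_4}$ must preserve this base, hence stabilise the antiflag $(\cL,R)$. The stabiliser in $\PGL(3,q)$ of a fixed antiflag is isomorphic to $\GL(2,q)$ (acting faithfully on the line $\cL$ while the complementary point is pinned down), so I would argue $K_{S_4}\leqslant$ this $\GL(2,q)$ and then check the reverse inclusion, namely that every projectivity fixing the antiflag actually preserves the pencil $\cP(S_4)$ and not merely its base. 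The main obstacle is this reverse inclusion: fixing the base is necessary but a priori not sufficient, so I must verify that the antiflag stabiliser acts on the pencil of conics through the base in a way that fixes $\cP(S_4)$ setwise. I would resolve this by showing that among the pencils of conics with the given antiflag as base, $\cP(S_4)$ is the distinguished one (those whose members all split off the common line $\cL$), which the full antiflag stabiliser must preserve, yielding $K_{S_4}\cong\GL(2,q)$ exactly.
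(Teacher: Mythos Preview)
Your proposal is correct and matches the paper's approach: the hyperplane-orbit distribution via the factored form $X_1(\lambda X_0+\mu X_2)$ of each pencil member, the point-orbit distribution via $\Psi(S_4)=\cZ(Z(XT+Y^2))$ together with the base-point correspondence of Lemma~\ref{baseLemma}, and the stabiliser via the antiflag. One small correction: the nucleus-plane condition is $x=z=t=0$ (all three diagonal entries of the representing matrix must vanish), not just $x=t=0$; this still yields the single point you expect. Your extra care about the reverse inclusion in the stabiliser argument actually goes beyond the paper, which simply asserts the equivalence; your resolution is sound, and indeed $\cP(S_4)$ is the \emph{unique} pencil with this antiflag as base, since any conic through the $q+1\geqslant 3$ collinear base points must contain $\cL$ as a component.
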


\begin{proof}
Let $S_4$ be the solid defined above. 
Every conic in the pencil $\cP(S_4)$ has the form $\cZ(X_1(\lambda X_0 + \mu X_2))$ for some $\lambda$, $\mu$, namely a pair of real lines, so the hyperplane-orbit distribution is $[0,q+1,0,0]$, and this implies that $\Omega_4 \not \in \{\Omega_1,\Omega_2,\Omega_3\}$. 
The cubic surface $\Psi(S_4)=\cZ(Z(XT+Y^2))$ is the union of a plane and a quadratic cone with vertex $P=(0,0,1,0)$, meeting in a conic $\mathcal C = \cZ(Y_0Y_5+Y_2^2)$. 
It intersects $S_4$ in $P \cup \cC$, so $S_4$ contains $q+2$ points of rank~$1$. 
The nucleus of $\cC$ is the unique point of $S_4$ in the nucleus plane.
The pencil $\cP(S_4)$ is fixed by an element of $\PGL(3,q)$ if and only if the antiflag comprising its base is fixed, so $K_{S_4}$ is isomorphic to the stabiliser of an antiflag, i.e. $K_{S_4} \cong \GL(2,q)$.
\end{proof}

This completes the classification of solids contained in no hyperplane of type $\cH_3$, or equivalently, of pencils of conics containing no nonsingular conics. 
We make the following observation for reference. 

\begin{corollary}
There is no pencil of conics in $\PG(2,q)$, $q$ even, with $q+1$ singular conics and empty base.
\end{corollary}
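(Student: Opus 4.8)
The plan is to show that a pencil in which \emph{every} conic is singular must always have a nonempty base. I would argue this by a direct application of Lemma~\ref{prop}, exploiting the hypothesis that the full complement of $q+1$ conics is singular, which forces $a_3=0$.

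First I would translate the hypotheses into the notation of the hyperplane-orbit distribution. A pencil with $q+1$ singular conics and no nonsingular conics corresponds to a solid $S$ with $\operatorname{OD}_{K,4}(S)=[a_1,a_{2r},a_{2i},a_3]$ satisfying $a_3=0$ and $a_1+a_{2r}+a_{2i}=q+1$. An empty base means $S$ meets $\cV(\bF_q)$ in no points, i.e. $b=0$ in the notation of Lemma~\ref{prop}. I would then assume for contradiction that such a pencil exists and derive a numerical impossibility. Putting $b=0$ into Lemma~\ref{prop}(ii) gives $a_{2r}-a_{2i}+1=0$, so $a_{2i}=a_{2r}+1$. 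Substituting $b=0$ and $a_3=0$ into Lemma~\ref{prop}(i) gives $a_1+2a_{2r}=q$. Since $q$ is even this forces $a_1$ to be even; combined with $a_1+a_{2r}+a_{2i}=q+1$ and $a_{2i}=a_{2r}+1$, I would substitute to obtain $a_1+2a_{2r}+1=q+1$, i.e. $a_1+2a_{2r}=q$, which is consistent, so the parity count alone does not immediately close the argument. I would therefore look more carefully: the two relations $a_1+2a_{2r}=q$ and $a_1+2a_{2r}+1=q+1$ are in fact the \emph{same} equation, so no contradiction arises purely from (i) and (ii) with $b=0$.

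The correct route is instead to combine the emptiness of the base with the earlier lemmas classifying the relevant solids. Since $a_3=0$, the solid is contained in no hyperplane of type $\cH_3$, so it falls under the classification just completed in this section, namely it belongs to one of $\Omega_1,\Omega_2,\Omega_3,\Omega_4$. I would then read off the point-orbit distributions $[r_1,r_{2n},r_{2s},r_3]$ computed for these four orbits in the preceding lemmas and observe that in every case $r_1\geqslant 1$: indeed $\Omega_1$ and $\Omega_4$ have $r_1=1$ and $q+2$ respectively, while $\Omega_2$ has $r_1=q+1$ and $\Omega_3$ has $r_1=1$. By Lemma~\ref{baseLemma}, the quantity $r_1$ equals the number of base points of $\cP(S)$ (as images under $\nu$), so in each orbit the base is nonempty. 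Hence no solid with $a_3=0$ has empty base, which is precisely the claim.

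The main obstacle is recognising that the naive attempt to close the argument using only the two identities of Lemma~\ref{prop} with $b=0$ fails, because those identities become redundant in this regime; one must instead invoke the full classification of the four orbits $\Omega_1$--$\Omega_4$ together with Lemma~\ref{baseLemma} to conclude that $b=r_1>0$ in every case. Once that structural observation is made, the verification is immediate from the already-tabulated data, so I expect the proof itself to be only a sentence or two.
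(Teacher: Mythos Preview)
Your final argument is essentially identical to the paper's own proof: invoke the classification of solids with $a_3=0$ as $\Omega_1\cup\Omega_2\cup\Omega_3\cup\Omega_4$, read off that $r_1\geqslant 1$ in each case, and apply Lemma~\ref{baseLemma}. The only omission is that the classification in Section~\ref{section1} is carried out under the standing assumption $q>2$ (see the remark immediately preceding that section), so, like the paper, you should add a one-line reference to Section~\ref{F2} to cover $q=2$.
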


\begin{proof}
If $q>2$ then a pencil $\cP$ with $q+1$ singular conics corresponds to a solid $S \in \Omega_1 \cup \ldots \cup \Omega_4$. 
By the point-orbit distributions calculated above, $S$ meets $\cV(\bF_q)$ in at least one point, so Lemma~\ref{baseLemma} implies that $\cP$ has at least one base point. 
By Section~\ref{F2}, the result holds also for $q=2$.
\end{proof}

\section{Solids contained in at least one and at most $q$ hyperplanes of type $\mathcal{H}_3$} \label{section2}

In this section we classify the $K$-orbits of solids contained in at least one hyperplane of type $\cH_3$ and at most $q$ such hyperplanes.
That is, we treat the solids $S$ with hyperplane-orbit distribution $\operatorname{OD}_{K,4}(S)=[a_1,a_{2r},a_{2i},a_3]$ where $1 \leqslant a_3 \leqslant q$. 
The cases (i) $a_1 \neq 0$, (ii) $a_1=0$ and $a_{2r} \neq 0$, and (iii) $a_1=a_{2r}=0$ and $a_{2i} \neq 0$ are analysed separately in Sections \ref{ss4.1}, \ref{ss4.2} and \ref{ss4.3}, respectively. 
The following observation implies that $a_1+a_{2r}+a_{2i} \leqslant 3$ (and hence $a_3 \geqslant q-2$) in all cases.

\begin{lemma}\label{3singular}
A pencil containing a nonsingular conic contains at most three singular conics.
\end{lemma}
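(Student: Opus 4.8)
The plan is to parametrise the pencil by $\PG(1,q)$ and reduce the count of singular conics to counting the roots of a single binary form of degree three. Concretely, fix two quadratic forms $f_1, f_2$ generating $\cP$, so that every member of $\cP$ is $\lambda f_1 + \mu f_2$ for a uniquely determined point $(\lambda:\mu) \in \PG(1,q)$. Writing $f_i = \sum_{0 \leqslant j \leqslant k \leqslant 2} a_{jk}^{(i)} X_j X_k$, each coefficient of the general member is the linear form $a_{jk}(\lambda,\mu) = \lambda a_{jk}^{(1)} + \mu a_{jk}^{(2)}$ in the pencil parameters.

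Next I would feed these coefficients into the singularity criterion of Lemma~\ref{pencils}. That criterion says the conic $\cZ(\lambda f_1 + \mu f_2)$ is nonsingular precisely when
\[
\Delta(\lambda,\mu) := a_{00}a_{12}^2 + a_{11}a_{02}^2 + a_{22}a_{01}^2 + a_{01}a_{02}a_{12} \neq 0,
\]
where the $a_{jk}$ are evaluated at $(\lambda,\mu)$. Since each $a_{jk}(\lambda,\mu)$ is homogeneous of degree $1$ in $(\lambda,\mu)$, every one of the four terms above is homogeneous of degree $3$, so $\Delta$ is a homogeneous polynomial of degree (at most) three in $\lambda$ and $\mu$. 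A conic of the pencil is singular if and only if its parameter $(\lambda:\mu)$ is a zero of $\Delta$.

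The final step uses the hypothesis that $\cP$ contains a nonsingular conic: this means $\Delta$ does not vanish at some $(\lambda:\mu)$, so $\Delta$ is not the zero polynomial. A nonzero binary form of degree at most three has at most three distinct zeros in $\PG(1,q)$, and distinct points of $\PG(1,q)$ give distinct conics of the pencil; hence $\cP$ contains at most three singular conics, as claimed. I do not anticipate a genuine obstacle here: the only point requiring a moment's care is the bookkeeping that confirms $\Delta$ is indeed a binary form of degree three (rather than something of higher degree or identically zero), and the conclusion $\emph{at most}$ three survives even if leading coefficients happen to cancel and lower the degree.
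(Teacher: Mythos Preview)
Your argument is correct and essentially identical to the paper's: both feed the pencil parameter into the singularity criterion of Lemma~\ref{pencils} to obtain a cubic form whose zeros parametrise the singular members, and observe that the nonsingular conic forces this form to be nonzero. The only cosmetic difference is that the paper takes the nonsingular conic $\cZ(g)$ as one generator and uses the affine parametrisation $f+\lambda g$, so the discriminant is a genuine cubic in $\lambda$ (its leading coefficient being the nonzero discriminant of $g$), whereas you use projective coordinates on $\PG(1,q)$ and allow for possible degree drop.
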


\begin{proof}
A pencil generated by $\cZ(f)$ and $\cZ(g)$, with $\cZ(g)$ nonsingular, contains a singular conic $\cZ(f+\lambda g)$ if and only if $\lambda$ is a root of a (certain) cubic in $\bF_q[X]$ (cf. Lemma~\ref{pencils}).
\end{proof}

\subsection{Solids contained in a hyperplane of type $\cH_1$} \label{ss4.1}

The stabiliser of a nonsingular conic $\cC$ in $\PG(2,q)$ has three orbits on lines, namely tangents to $\cC$, secants to $\cC$, and lines external to $\cC$. 
Hence, there are at most three $K$-orbits of solids contained both in a hyperplane of type $\cH_3$ (which corresponds to a nonsingular conic) and a hyperplane of type $\cH_1$ (which corrersponds to a double line).
Since the corresponding types of pencils have different numbers of base points, there are exactly three $K$-orbits. 
The following representatives are obtained using the nonsingular conic $\cC = \cZ(X_0X_1+X_2^2)$ and the double lines corresponding to the tangent $\cZ(X_0)$, the secant $\cZ(X_2)$ and the external line $\cZ(X_0+X_1+\sqrt{\gamma} X_2)$, where $\gamma$ is some fixed element of $\bF_q$ with $\operatorname{Tr}(\gamma^{-1})=1$ (cf. Lemma~\ref{quadratic}):
\begin{equation} \label{567reps}
\Omega_5: \begin{bmatrix} \cdot&x&y\\x&z&t\\y&t&x \end{bmatrix}, \quad
\Omega_6: \begin{bmatrix} x&\cdot&y\\\cdot&z&t\\y&t&\cdot \end{bmatrix}, \quad
\Omega_7: \begin{bmatrix} x&y&z\\ y&x+\gamma y&t\\z&t&y \end{bmatrix} \text{ where } \operatorname{Tr}(\gamma^{-1})=1.
\end{equation}

\begin{table}[t!]
\center
\begin{tabular}{llll}
\toprule
Orbit & Point-orbit distribution & Hyperplane-orbit distribution & Stabiliser \\
\midrule
$\Omega_5$ & $[1,q+1,q^2-1,q^3]$ & $[1,0,0,q]$ & $E_q^2 : C_{q-1}$ \\
$\Omega_6$ & $[2,q+1,q^2+q-2,q^3-q]$ & $[1,1,0,q-1]$ & $C_{q-1}^2 : C_2$ \\
$\Omega_7$ & $[0,q+1,q^2+q,q^3-q]$ & $[1,0,1,q-1]$ & $D_{2(q+1)} \times C_{q-1}$ \\
\bottomrule
\end{tabular}
\caption{Data for Lemma~\ref{567hod}.}
\label{567table}
\end{table}

\begin{lemma} \label{567hod}
The point-orbit distributions, hyperlane-orbit distributions, and stabilisers of solids of types $\Omega_5$, $\Omega_6$ and $\Omega_7$ are as in Table~\ref{567table}. 
In particular, these orbits are distinct from each other and from $\Omega_1,\dots,\Omega_4$.
\end{lemma}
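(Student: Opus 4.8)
The plan is to verify each column of Table~\ref{567table} directly from the explicit representatives in \eqref{567reps}, switching freely between the pencil-of-conics and the matrix (solid) pictures as convenient. I will treat $\Omega_5$, $\Omega_6$ and $\Omega_7$ in parallel, since all three pencils are generated by the common nonsingular conic $\cC=\cZ(X_0X_1+X_2^2)$ together with a double line $\cL^2$, and differ only in whether $\cL$ is tangent, secant or external to $\cC$. Throughout I will use that $K_S$ is identified with the stabiliser of the pencil $\cP(S)$ in $\PGL(3,q)$, and that the point-, hyperplane-orbit distributions and stabiliser are $K$-invariant, so the chosen representatives suffice.

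First I will compute the hyperplane-orbit distributions, which are cleanest from the pencil side. For each representative I will write the general pencil member $\cC+\lambda\cL^2$ and apply the singularity criterion of Lemma~\ref{pencils} to locate its singular members. In the tangent case ($\Omega_5$) the criterion evaluates to a nonzero constant for every finite $\lambda$, so $\cL^2$ is the unique singular member and the distribution is $[1,0,0,q]$. In the secant case ($\Omega_6$) there is exactly one further singular member, a pair of real lines, giving $[1,1,0,q-1]$. In the external case ($\Omega_7$) the single further singular member is $\gamma^{-1}(X_0^2+\gamma X_0X_1+X_1^2)$, which by Lemma~\ref{quadratic} together with the identity $\operatorname{Tr}(\gamma^{-2})=\operatorname{Tr}(\gamma^{-1})=1$ is a pair of conjugate imaginary lines, giving $[1,0,1,q-1]$. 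I will cross-check each distribution against Lemma~\ref{prop}, using that the base of the pencil, hence the number $b$ of rank-$1$ points of $S$ (via Lemma~\ref{baseLemma}), is $\cL\cap\cC$, namely $1$, $2$ and $0$ points respectively. In particular the three distributions are pairwise distinct and all have $a_3>0$, whereas $\Omega_1,\dots,\Omega_4$ all have $a_3=0$ by Lemmas~\ref{12dists}, \ref{3data} and~\ref{s4}; this establishes the asserted distinctness.

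Next I will compute the point-orbit distributions by working on the solid side. For each representative I will expand $\det M$ to obtain the defining cubic of $\Psi(S)$ and count its $\bF_q$-points by a short case analysis, obtaining $q^2+q+1$ points for $\Omega_5$ and $(q+1)^2$ points for $\Omega_6$ and $\Omega_7$. Among these, the rank-$1$ points are the $\nu$-images of the base points (Lemma~\ref{baseLemma}), so $r_1=1,2,0$ respectively. Intersecting $S$ with the nucleus plane (the zero-diagonal locus) gives a line in each case, all of whose points have rank exactly $2$ since the nucleus plane consists entirely of rank-$2$ points, so $r_{2n}=q+1$ throughout. The remaining entries then follow from $r_{2s}=|\Psi(S)|-r_1-r_{2n}$ and $r_3=(q^3+q^2+q+1)-|\Psi(S)|$, reproducing the first column of Table~\ref{567table}.

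Finally, the stabilisers, which I expect to be the main obstacle. In each case I will pin down $K_S$ using that it must fix the base configuration and the set of singular conics (the members of different singularity type being fixed individually), and then exploit transitivity of $\PGL(3,q)$ on the relevant configuration to normalise and compute the linear maps $A$ with $AMA^T$ preserving $S$. For $\Omega_5$, $K_S$ fixes the unique base point and the unique singular line $\cL$, hence a flag, and cutting the flag stabiliser down to the subgroup preserving the pencil should yield $E_q^2:C_{q-1}$ (as in the treatment of $\Omega_1$). For $\Omega_6$, $K_S$ fixes the unordered pair of base points, with a $C_2$ interchanging them and a torus $C_{q-1}^2$ fixing each, giving $C_{q-1}^2:C_2$. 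The delicate case is $\Omega_7$: here there are no base points, and the factor $D_{2(q+1)}$ arises from the action on the pair of conjugate imaginary lines, equivalently the stabiliser in $\PGL(2,q)$ of a conjugate point-pair of $\PG(1,q^2)$, so I anticipate passing to $\bF_{q^2}$ to identify it cleanly, with the commuting $C_{q-1}$ realised as the subgroup fixing every conic of the pencil. In every case I will confirm the resulting group order against $|\PGL(3,q)|$ divided by the orbit size as a consistency check, which matches the orders $q^2(q-1)$, $2(q-1)^2$ and $2(q^2-1)$ recorded in Table~\ref{567table}.
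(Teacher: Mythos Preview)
Your proposal is correct, and for the hyperplane- and point-orbit distributions it follows the paper's argument essentially verbatim. The difference lies in the stabiliser computations: you work throughout on the pencil side in $\PG(2,q)$, whereas the paper switches to the solid side for $\Omega_5$ and $\Omega_7$. For $\Omega_5$, the paper fixes the unique rank-$1$ point $P\in S_5$ and the line $\ell=S_5\cap\pi_n$, then determines which elements of $K_P\cap K_\ell$ send a spanning point of $S_5$ back into $S_5$. For $\Omega_6$, the paper notes that the three singular lines of $\cP(S_6)$ form a triangle with setwise stabiliser $C_{q-1}^2:\operatorname{Sym}_3$, and that $K_{S_6}$ is the subgroup fixing the distinguished (double) line---slightly cleaner than your base-point description, which as stated gives only containment. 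For $\Omega_7$, the paper avoids the quadratic extension altogether: the unique $\cH_1$-hyperplane meets $\cV(\bF_q)$ in a conic $\cC$ and the unique $\cH_{2i}$-hyperplane meets it in a point $P\notin\langle\cC\rangle$, so $K_{S_7}\leqslant K_\cC\cap K_P\cong\GL(2,q)$; then $S_7\cap\langle\cC\rangle$ is a line external to $\cC$, whose stabiliser inside $K_\cC\cap K_P$ is $D_{2(q+1)}\times C_{q-1}$, with equality coming from a bijection between such external lines and $\cH_{2i}$-hyperplanes through $P$. Both routes are valid; just note that for $\Omega_5$ the flag stabiliser you start from is the full Borel of order $q^3(q-1)^2$, so the pencil condition must genuinely cut this down by a factor of $q(q-1)$---you will need to impose preservation of a second pencil member, not merely of the flag.
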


\begin{proof}
Let $S_i \in \Omega_i$, $i \in \{5,6,7\}$, be the representatives given in \eqref{567reps}. 
The hyperplane-orbit distribution of $S_5$ is an immediate consequence of Lemma~\ref{pencils}, which implies that a conic $\cZ(\lambda X_0^2+ X_0X_1+X_2^2)$ in the pencil $\cP(S_5)$ cannot be singular. 
Similarly, a conic $\cZ(\lambda X_2^2+ X_0X_1+X_2^2)$ in $\cP(S_6)$ is singular if and only if $\lambda=1$, in which case one obtains the pair of real lines $\cZ(X_0X_1)$, both of which are tangents to the conic $\cZ(X_0X_1+X_2^2)$.
Finally, a conic $\cZ(\lambda( X_0^2+X_1^2+\gamma X_2^2)+X_0X_1+X_2^2)$ in $\cP(S_7)$ is singular if and only if $\lambda =\gamma^{-1}$, in which case one obtains the pair of conjugate imaginary lines $\cZ(X_0^2+\gamma X_0X_1+X_1^2)$. 
The hyperplane-orbit distributions imply that $\Omega_5$, $\Omega_6$ and $\Omega_7$ are distinct and do not belong to $\{\Omega_1,\dots,\Omega_4\}$. 

Next, we calculate the point-orbit distributions. 
The cubic surface $\Psi(S_5)=\cZ(X^3+Y^2Z)$ consists of $q^2+q+1$ points, being a cone with vertex a point and base a planar rational cubic curve. 
It meets the nucleus plane $\pi_n$ in the line $\ell : \cZ(X,Z)$, and $\cV(\bF_q)$ in its unique singular point $P = (0,0,1,0)$, i.e. the image of the base point of $\cP(S_5)$ under~$\nu$. 
The cubic surface $\Psi(S_6)=\cZ(XT^2+Y^2Z)$ consists of $q^2+2q+1$ points, since its point set is in one-to-one correspondence with the points on the hyperbolic quadric $\cZ(XT+YZ)$. 
It meets $\pi_n$ in the line $\cZ(X,Z)$, and $\cV(\bF_q)$ in the images of the two base points of $\cP(S_6)$.
Finally, $\Psi(S_7)=\cZ(\gamma X Y^2+\gamma Y Z^2 +X T^2+X^2Y +XZ^2+Y^3)$ consists of 
two lines in the plane $\cZ(Y)$ and $q^2$ additional points. 
It is disjoint from $\cV(\bF_q)$ and intersects $\pi_n$ in the line $\cZ(X,Y)$.

It remains to calculate the stabilisers. 
If an element of $K$ represented by a matrix $(g_{ij}) \in \GL(3,q)$ fixes $S_5$ then it must fix the point $P = S_5 \cap \cV(\bF_q)$ and the line $\ell = S_5 \cap \pi_n$ (both calculated above). 
This occurs if and only if $g_{12}=g_{13}=g_{23}=g_{32}=0$. 
An element of $K_P \cap K_\ell$ fixes $S_5$ if and only if it also maps the point $Q=(1,0,0,0)$ into $S_5$, since $S_5 = \langle P,Q,\ell \rangle$. 
This occurs if and only if also $g_{33}^2 = g_{11}g_{22}$. 
Factoring out scalars therefore gives $K_{S_5} \cong E_q^2 : C_{q-1}$. 
Since $\mathcal{P}(S_6)$ contains a unique double line $\mathcal{L}_1^2$ and a unique pair of real lines $\mathcal{L}_2\mathcal{L}_3$, its stabiliser in $\PGL(3,q)$ is equal to the stabiliser of $\mathcal{L}_1$ inside the stabiliser $C_{q-1}^2 : \operatorname{Sym}_3$ of $\{ \mathcal{L}_1, \mathcal{L}_2, \mathcal{L}_3 \}$. 
Hence, $K_{S_6} \cong C_{q-1}^2 : C_2$. 
Finally, a solid $S_7 \in \Omega_7$ is contained in a unique hyperplane $H_1$ of type $\cH_1$, which meets $\cV(\bF_q)$ in a conic $\cC$, and in a unique hyperplane $H_2$ of type $\cH_{2i}$, which meets $\cV(\bF_q)$ in a point $P \not \in H_1$. 
Therefore, $K_{S_7}$ is a subgroup of $K_\cC \cap K_P \cong \GL(2,q)$. 
Since $S_7$ is disjoint from $\cV(\bF_q)$, it meets the conic plane $\pi = \langle\cC\rangle$ in a line $\ell$ external to $\cC$. 
By considering the action of $K_{S_7}$ on $\pi$, we therefore deduce that $K_{S_7}$ is a subgroup of the stabiliser of $\ell$ in $K_\cC \cap K_P$, which has shape $D_{2(q+1)} \times C_{q-1}$. 
The fact that $K_{S_7}$ is equal to this group follows from the one-to-one correspondence between the hyperplanes of type $\cH_{2i}$ through $P$ and the lines external to $\cC$ in $\pi$. 
(Over the quadratic extension of $\PG(5,q)$, $\ell$ meets $\cC$ in a pair of conjugate points, and $H_2$ meets the Veronese surface in two conjugate conics which pass through $P$ and meet $\cC$ in those points, so $H_2$ is uniquely determined by $\ell$.)
\end{proof}

\begin{remark}\label{rem:Omega6}
\normalfont 
It follows from the first part of the proof of Lemma~\ref{567hod} that $\Omega_6$ can also be obtained by considering either (i) a pencil spanned by a nonsingular conic $\cC$ and a pair of two real lines tangent to $\cC$, or (ii) a pencil spanned by a pair of real lines and a double line meeting the pair in two distinct points.
\end{remark}

\subsection{Solids contained in a hyperplane of type $\cH_{2r}$ and no hyperplane of type $\cH_1$} \label{ss4.2}

If $S$ is a solid with hyperplane-orbit distribution $[0,a_{2r},a_{2i},a_3]$ where $a_{2r}>0$ and $1 \leqslant a_3 \leqslant q$, then we may assume without loss of generality that $\cP(S)$ is generated by a nonsingular conic $\cC$ and a pair of real lines $\cL_1\cL_2$. 
Let us encode the configuration $({\mathcal{C}},\cL_1,\cL_2)$ by the pair of integers $(k_1,k_2)$ where $k_i$ denotes the number of points in $\cL_i\cap \mathcal C$. 
The possible configurations are $(k_1,k_2) = (2,2)$, $(2,1)$, $(2,0)$, $(1,1)$, $(1,0)$ or $(0,0)$. 
By Remark~\ref{rem:Omega6}, we may ignore the case $(k_1,k_2)=(1,1)$.

\subsubsection{$(k_1,k_2)=(2,2)$} \label{kk=22} 

If $(k_1,k_2)=(2,2)$ then $\cP(S)$ has either three or four base points. 
Exactly one $K$-orbit arises from each of these two cases. 
In the case of three base points, this follows from the fact that the stabiliser of a nonsingular conic acts $3$-transitively on its points; in the case of four base points, it follows from the fact that the image of a frame of $\PG(2,q)$ under $\nu$ spans a solid.
The resulting orbits are
\begin{equation} \label{89reps}
\Omega_{8}: \begin{bmatrix}x&y&z\\y&t&z\\z&z&y\end{bmatrix}, \quad
\Omega_{9}: \begin{bmatrix}x&x&y\\x&z&t\\y&t&t\end{bmatrix}.
\end{equation}
Here the representative for $\Omega_8$ is obtained from the pencil generated by $\cC = \cZ(X_0X_1+X_2^2)$ and the pair of real lines $\cL_1=\cZ(X_0+X_2)$ and $\cL_2 = \cZ(X_1+X_2)$, which meet in the point $(1,1,1)$ on $\cC$. 
To obtain the representative for $\Omega_9$, note that the conic $\cC = \cZ(X_0(X_0+X_1) + \lambda X_2(X_1+X_2))$ is nonsingular for all $\lambda \not \in \{0,1\}$, by Lemma~\ref{pencils}. 
Fix $\cC$ by choosing such a $\lambda$, and then take the pair of real lines $\cL_1\cL_2 = \cZ(X_0(X_0+X_1))$, which meets $\cC$ in the four points 
\begin{equation} \label{S9base}
P_1 = (0,1,0), \; P_2 = (1,1,0), \; P_3 = (0,1,1), \; P_4 = (1,1,1).
\end{equation} 

\begin{lemma} \label{89hod}
The hyperplane-orbit distribution of a solid of type $\Omega_8$, respectively $\Omega_9$, is $[0,2,0,q-1]$, respectively $[0,3,0,q-2]$.
In particular, these orbits are distinct and do not belong to $\{\Omega_1,\dots,\Omega_7\}$.
\end{lemma}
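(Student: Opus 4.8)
The plan is to exploit the fact that the $q+1$ hyperplanes of $\PG(5,q)$ through a solid $S$ correspond, via the map $\delta$, precisely to the $q+1$ conics comprising the pencil $\cP(S)$; hence the hyperplane-orbit distribution $[a_1,a_{2r},a_{2i},a_3]$ simply records how many of these conics are double lines, pairs of real lines, pairs of imaginary lines, and nonsingular conics, respectively. So I would parametrise each pencil as $\cZ(f_\lambda)$, $\lambda\in\bF_q\cup\{\infty\}$, and sort the conics into these four types.

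First I would count the nonsingular conics using the criterion of Lemma~\ref{pencils}. For the representative of $\Omega_8$ in \eqref{89reps}, expanding $f_\lambda = (X_0X_1+X_2^2)+\lambda(X_0+X_2)(X_1+X_2)$ and feeding its coefficients into the invariant of Lemma~\ref{pencils} yields an expression equal to $1+\lambda$ (using that squaring is additive in characteristic~$2$); thus the only finite singular member occurs at $\lambda=1$, which together with the generating pair of real lines at $\lambda=\infty$ gives exactly two singular conics, so $a_3=q-1$. For $\Omega_9$, the analogous computation with $f_\lambda = X_0(X_0+X_1)+\lambda X_2(X_1+X_2)$ gives the invariant $\lambda(\lambda+1)$, singular precisely at $\lambda\in\{0,1\}$, which with the $\lambda=\infty$ generator gives three singular conics, so $a_3=q-2$.

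Next I would rule out double lines, i.e. show $a_1=0$. In characteristic~$2$ a conic is a double line exactly when all three mixed coefficients $a_{01},a_{02},a_{12}$ vanish; inspection of $f_\lambda$ shows this is impossible for either pencil (for $\Omega_8$ the mixed coefficients are $1+\lambda,\lambda,\lambda$, which cannot simultaneously vanish, while the $\lambda=\infty$ member has nonzero mixed part; for $\Omega_9$ one has $a_{01}=1$ for all finite $\lambda$, and the $\lambda=\infty$ member has $a_{12}=1$). To finish, I would invoke the base-point counts already recorded in the construction, namely $b=3$ for $\Omega_8$ and $b=4$ for $\Omega_9$, together with Lemma~\ref{prop}(ii), which gives $a_{2r}-a_{2i}=b-1$. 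Combining this with $a_{2r}+a_{2i}=(q+1)-a_3$ (the total number of singular conics) and $a_1=0$ forces $(a_{2r},a_{2i})=(2,0)$ for $\Omega_8$ and $(3,0)$ for $\Omega_9$, yielding the claimed distributions $[0,2,0,q-1]$ and $[0,3,0,q-2]$. The ``in particular'' clause is then immediate: the two distributions differ in the $a_{2r}$ entry, so $\Omega_8\neq\Omega_9$, and each has $a_1=0$ together with $a_3>0$, whereas every orbit among $\Omega_1,\dots,\Omega_7$ has either $a_1>0$ or $a_3=0$ (the unique one with $a_1=0$, namely $\Omega_4$, has $a_3=0$).

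I expect the only genuine subtlety to be distinguishing the real from the imaginary singular pairs, i.e. pinning down $a_{2r}$ versus $a_{2i}$ once the singular conics have been located. The route above sidesteps any explicit factorisation by feeding the known base-point count into Lemma~\ref{prop}; alternatively one could factor each extra singular conic directly (for $\Omega_8$ the $\lambda=1$ member is $X_2(X_0+X_1)$, and for $\Omega_9$ the $\lambda=1$ member factors as $(X_0+X_2)(X_0+X_1+X_2)$), confirming in each case that the factors are defined over $\bF_q$ and hence that the pair is real.
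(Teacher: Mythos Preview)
Your proof is correct and follows essentially the same approach as the paper: both apply Lemma~\ref{pencils} to the parametrised pencil to locate the singular conics and then identify each as a pair of real lines. The only difference is that the paper confirms the real-line-pair type by direct inspection of the factorisations (which you give as your alternative route), whereas your primary route feeds the base-point count into Lemma~\ref{prop}; both are valid and equally short.
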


\begin{proof}
Let $S_8$ and $S_9$ denote the representatives in \eqref{89reps}. 
A conic $\cZ(X_0X_1+X_2^2 + \lambda((X_0+X_2)(X_1+X_2)))$ in the pencil $\cP(S_8)$ is singular if and only if $\lambda=1$, by Lemma~\ref{pencils}, and setting $\lambda=1$ yields a pair of real lines. 
As noted above, a conic $\cZ(X_0(X_0+X_1) + \lambda X_2(X_1+X_2))$ in $\cP(S_9)$ is singular if and only if $\lambda \in \{0,1\}$, and both values produce pairs of real lines distinct from the chosen generator $\cZ(X_0(X_0+X_1))$. 
\end{proof}

\begin{remark}\label{rem:O8}
\normalfont 
If $S_8 \in \Omega_8$ then the second pair of real lines in $\cP(S_8)$ has $(k_1,k_2) = (2,1)$: it comprises the secant $\cZ(X_2)$ and the tangent $\cZ(X_0+X_1)$ to the generating nonsingular conic $\cZ(X_0X_1+X_2^2)$. 
Since the stabiliser of a nonsingular conic $\cC$ acts $3$-transitively on the points of $\cC$, this implies that $\Omega_8$ is the only $K$-orbit obtained from a pencil generated by a nonsingular conic $\cC$ and a real line pair consisting of a secant and a tangent to $\cC$ meeting at a point not on $\cC$. 
(Note that the above lines meet in the point $(1,1,0)$, which is not on $\cZ(X_0X_1+X_2^2)$.)
On the other hand, the three pairs of real lines in $\cP(S_9)$ all have $(k_1,k_2) = (2,2)$. 
\end{remark}

\begin{lemma} \label{89pod}
The point-orbit distribution of a solid of type $\Omega_{8}$ is $[3,1,q^2+2q-3,q^3-q]$. 
The point-orbit distribution of a solid of type $\Omega_{9}$ is $[4,1, q^2+3q-4,q^3-2q]$.
\end{lemma}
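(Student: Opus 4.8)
The plan is to compute the point-orbit distribution $[r_1, r_{2n}, r_{2s}, r_3]$ of each representative directly from its matrix form in \eqref{89reps}, exploiting the same machinery used for the earlier orbits. For each solid $S_i$ ($i \in \{8,9\}$), I would first determine the number of rank-$1$ points $r_1$ via Lemma~\ref{baseLemma}: this equals the number of base points of the pencil $\cP(S_i)$. For $\Omega_8$ the base is $\{(1,1,1)\}$ together with the second intersection points of $\cC$ with $\cL_1, \cL_2$, and the analysis in Remark~\ref{rem:O8} shows there are exactly three base points; for $\Omega_9$ the four points in \eqref{S9base} give $r_1 = 4$. These values are consistent with the asserted distributions.

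Next I would compute the cubic surface $\Psi(S_i) = \cZ(\det M_{S_i})$ and count its points, which gives $r_1 + r_{2n} + r_{2s}$ (the total number of points of rank at most $2$). Expanding the determinant of the matrix for $S_8$ and of the matrix for $S_9$ yields explicit cubic forms in $(X,Y,Z,T)$; I would identify the geometric nature of each surface (e.g. reducible components, cones, or a smooth cubic) in order to count its $\Fq$-points cleanly rather than by brute force. Then $r_{2n}$, the number of rank-$2$ points lying in the nucleus plane $\pi_n$, is obtained by intersecting $S_i$ with $\pi_n$ (symmetric matrices with zero diagonal) and discarding any rank-$1$ contributions; this is a short linear computation. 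Subtracting gives $r_{2s} = (\text{points on }\Psi(S_i)) - r_1 - r_{2n}$, and finally $r_3 = (q^3 + q^2 + q + 1) - (r_1 + r_{2n} + r_{2s})$ since a solid has $q^3 + q^2 + q + 1$ points total.

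The main obstacle I anticipate is the accurate point count on the cubic surface $\Psi(S_i)$. For $\Omega_8$ the determinant should factor or describe a surface whose point count is routine, but for $\Omega_9$ the base has four points in general position, so $\Psi(S_9)$ is a cubic surface passing through the four rank-$1$ points and meeting $\pi_n$ in a controlled configuration; getting the count $r_1 + r_{2n} + r_{2s} = 4 + 1 + (q^2 + 3q - 4) = q^2 + 3q + 1$ right requires care, particularly in correctly separating the nucleus-plane points from the remaining rank-$2$ points and in avoiding double-counting at intersections of components. I would cross-check the final $r_3$ values ($q^3 - q$ for $\Omega_8$ and $q^3 - 2q$ for $\Omega_9$) against the hyperplane-orbit distributions from Lemma~\ref{89hod} and against Lemma~\ref{prop}, which relates $b = r_1$ to the hyperplane data, as an internal consistency test. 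Since every quantity in the claimed distributions is an explicit polynomial in $q$, the computation is finite and deterministic once the surfaces are correctly identified, so the difficulty is bookkeeping rather than conceptual.
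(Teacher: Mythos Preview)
Your proposal is correct and follows essentially the same route as the paper: compute the cubic surface $\Psi(S_i)$ from the determinant, count its $\Fq$-points by slicing with a convenient hyperplane (the paper uses $\cZ(X)$) and parameterising the remainder, then read off $r_1$ from the base of the pencil and $r_{2n}$ from the intersection with the nucleus plane. The only difference is cosmetic: the paper carries out the explicit parameterisations rather than appealing to the geometric type of $\Psi(S_i)$, and it does not bother with the cross-check against Lemma~\ref{prop}.
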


\begin{proof}
Consider again the solids $S_8$ and $S_9$ in \eqref{89reps}. 
The cubic surface $\Psi(S_8) = \cZ(XYT+XZ^2+Y^3+Z^2T)$ intersects the plane $\cZ(X)$ in a rational cubic curve with $q+1$ points, and the points of $\Psi(S_8)\setminus\cZ(X)$ comprise the set $\{(1,0,0,t): t \in \Fq \} \cup \{(1,1,1,t): t\in \Fq\} \cup \{(1,y,z,f(y,z)): y,z \in \Fq; \; y\neq z^2\}$, where $f(y,z) = (z^2+y^3)/(y+z^2)$, which has size $q^2+q$. 
It meets $\cV(\bF_q)$ in the image of the base of $\cP(S_8)$, and the nucleus plane in a unique point. 
The cubic surface $\Psi(S_9)= \cZ(Z(XT+Y^2)+XT^2+X^2T)$ meets the plane $\cZ(X)$ in two lines and contains $q^2+q$ additional points, namely those comprising the set $\{(1,0,z,0): z \in \Fq\} \cup \{(1,1,z,1): z \in \Fq\} \cup \{(1,y,g(y,t),t):y,t \in \Fq; t \neq y^2\}$ where $g(y,t) = (t+t^2)/(t+y^2)$. 
It meets $\cV(\bF_q)$ in the image of the base of $\cP(S_9)$, and the nucleus plane in a point. 
\end{proof}

\begin{lemma} \label{89stabs}
If $S_8 \in \Omega_8$ then $K_{S_8} \cong C_{q-1} \times C_2$. 
If $S_9 \in \Omega_9$ then $K_{S_9} \cong \operatorname{Sym}_4$. 
\end{lemma}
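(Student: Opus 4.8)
The plan is to compute each stabiliser through the associated pencil, using that $K_{S_i}$ is isomorphic to the stabiliser of $\cP(S_i)$ in $\PGL(3,q)$, and to exploit the fact (Lemma~\ref{baseLemma}) that the base of a pencil is an intrinsic invariant, hence preserved setwise by this stabiliser. I would begin with $\Omega_9$, which is the cleaner case. Here $\cP(S_9)$ has the four base points $P_1,\dots,P_4$ of \eqref{S9base}, and a short determinant check shows that no three of them are collinear, so they form a frame of $\PG(2,q)$. Since four points in general position impose four independent conditions on conics, the conics through $P_1,\dots,P_4$ form a pencil, which must therefore equal $\cP(S_9)$ (this is the dual of the observation, used in Section~\ref{kk=22}, that the image of a frame under $\nu$ spans a solid). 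Consequently a projectivity preserves $\cP(S_9)$ if and only if it preserves the set $\{P_1,\dots,P_4\}$, so $K_{S_9}$ is the setwise stabiliser of the frame. As $\PGL(3,q)$ acts sharply transitively on ordered frames, this action on the four points is faithful and realises every permutation, giving $K_{S_9}\cong \operatorname{Sym}_4$.

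For $\Omega_8$ I would again pass to the base, which now consists of three points. Two of them, $B=(0,1,0)$ and $C=(1,0,0)$, are the transversal intersections of the generating nonsingular conic $\cC=\cZ(X_0X_1+X_2^2)$ with the secant $\cZ(X_2)$; the third is the vertex $V=(1,1,1)$ of the real line pair $\cL_1\cL_2=\cZ((X_0+X_2)(X_1+X_2))$, which lies on $\cC$ and at which every conic of the pencil is tangent to the common line $t=\cZ(X_0+X_1)$. The key point is that $V$ is intrinsically distinguished as the unique tangential base point (equivalently, the vertex lying on the conics, as opposed to the vertex $(1,1,0)$ of the secant--tangent pair identified in Remark~\ref{rem:O8}). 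Hence $K_{S_8}$ fixes $V$ and the tangent $t$, and permutes the pair $\{B,C\}$; conversely, since a pencil through $B$ and $C$ tangent to $t$ at $V$ is uniquely determined by this data, $K_{S_8}$ is exactly the stabiliser in $\PGL(3,q)$ of the configuration $(V,t,\{B,C\})$.

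It then remains to determine this configuration stabiliser explicitly. I would first compute the subgroup $N$ fixing $B$ and $C$ individually while preserving $V$ and $t$: parametrising the matrices $(g_{ij})\in\GL(3,q)$ that fix the points $B$, $C$ and $V$, the extra requirement that $t$ be preserved forces the two relevant diagonal entries to coincide, and a direct calculation then shows that $N$ is a one-dimensional torus isomorphic to $C_{q-1}$. The coordinate swap $\sigma\colon X_0\leftrightarrow X_1$ fixes $\cC$, interchanges $B$ and $C$, and fixes $V$ and $t$, so it is an involution in $K_{S_8}\setminus N$; since its matrix commutes with those of $N$ (the relevant block being symmetric in the first two coordinates), we obtain $K_{S_8}=N\times\langle\sigma\rangle\cong C_{q-1}\times C_2$. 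The main obstacle is precisely this $\Omega_8$ computation: one must correctly single out the tangential base point, and then carry out the matrix bookkeeping confirming both that $N\cong C_{q-1}$ and that $\sigma$ centralises $N$, so that the extension is a genuine direct product rather than a nonsplit or non-abelian one. The $\Omega_9$ case, by contrast, needs no matrix computation once the frame structure of the base is recognised.
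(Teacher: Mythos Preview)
Your argument is correct. For $\Omega_9$ you do exactly what the paper does: observe that the base is a frame and invoke the sharp transitivity of $\PGL(3,q)$ on ordered frames.

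For $\Omega_8$ your route differs slightly from the paper's. The paper distinguishes $V=(1,1,1)$ by analysing the two real line pairs in $\cP(S_8)$ (the vertex of one pair lies on the other pair, but not conversely), and then reduces to the stabiliser of the four points $P=V$, $P'=(1,1,0)$ and $\{Q,R\}=\{B,C\}$, noting that $P',Q,R$ are collinear. You instead distinguish $V$ as the tangential base point and carry the common tangent line $t=\cZ(X_0+X_1)$ rather than the point $P'$. Since $t=\langle V,P'\rangle$ and $P'=t\cap\langle B,C\rangle$, the two configurations are interchangeable and produce the same explicit generators: your torus $N$ and involution $\sigma$ are precisely the matrices the paper writes down. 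Your version has the minor advantage that the converse inclusion (any projectivity preserving $(V,t,\{B,C\})$ preserves the pencil) is immediate from the dimension count, whereas the paper leaves this step implicit.
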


\begin{proof}
The solid $S_8 \in \Omega_8$ given in \eqref{89reps} contains exactly two pairs of real lines, namely $\mathcal{L}_1\mathcal{L}_2$ and $\mathcal{L}_1'\mathcal{L}_2'$ where $\mathcal{L}_1 = \mathcal{Z}(X_1+X_2)$, $\mathcal{L}_2 = \mathcal{Z}(X_0+X_2)$, $\mathcal{L}_1' = \mathcal{Z}(X_2)$ and $\mathcal{L}_2' = \mathcal{Z}(X_0+X_1)$. 
Note that $\mathcal{L}_1$ and $\mathcal{L}_2$ meet in a point $P = (1,1,1)$ which also lies on $\mathcal{L}_2'$, while $\mathcal{L}_1'$ and $\mathcal{L}_2'$ meet in a point $P' = (1,1,0)$ disjoint from $\mathcal{L}_1\mathcal{L}_2$. 
The stabiliser $G \leqslant \PGL(3,q)$ of $\mathcal{P}(S_8)$ therefore fixes both of $\mathcal{L}_1'$ and $\mathcal{L}_2'$, because $\mathcal{L}_1'$ meets $\mathcal{L}_1\mathcal{L}_2$ in the unique point $P$ while $\mathcal{L}_2'$ meets $\mathcal{L}_1\mathcal{L}_2$ in two points, $Q=(1,0,0)$ and $R=(0,1,0)$. 
Hence, it also fixes $\mathcal{L}_1\mathcal{L}_2$ and therefore $P$. 
That is, $G$ is equal to the stabiliser of $P$, $P'$ and $\{Q,R\}$. 
Since $P'$, $Q$ and $R$ are collinear, $G \cong C_{q-1} \times C_2$. 
Explicitly, $K_{S_8} \cong G$ is generated by the elements of $K$ represented by the matrices
\begin{equation} \label{Sigma8stab}
\left[
\begin{matrix}
0 & 1 & 0 \\
1 & 0 & 0 \\
0 & 0 & 1
\end{matrix}
\right]
\quad \text{and} \quad
\left[
\begin{matrix}
1 & 0 & \omega+1 \\
0 & 1 & \omega+1 \\
0 & 0 & \omega
\end{matrix}
\right], 
\text{ where } \langle \omega \rangle = \mathbb{F}_q^\times.
\end{equation}
If $S_9 \in \Omega_9$ then the base of $\mathcal{P}(S_9)$ is the frame of $\PG(2,q)$ given in  \eqref{S9base}, so $K_{S_4} \cong \operatorname{Sym}_4$. 
\end{proof}

\subsubsection{$(k_1,k_2)=(1,0)$} \label{sss422}

To prove that the configuration $(k_1,k_2)=(1,0)$ leads to a unique $K$-orbit, we consider extending the nonsingular conic $\cC$ to a conic in the quadratic extension $\PG(2,q^2)$ of $\PG(2,q)$. 
For clarity, we write $\overline{\cC}$ for the extension of $\cC$ to $\PG(2,q^2)$, and use the same `bar' notation for the corresponding extensions of other objects, in particular $\overline{\cL}_1$ and $\overline{\cL}_2$ for the pair of real lines $\cL_1$ and $\cL_2$. 
Let $\sigma \in \mathrm{P}\Gamma\mathrm{L}(3,q^2)$ be the Frobenius collineation of $\PG(2,q^2)$ induced by the automorphism $a \mapsto a^q$ of $\bF_{q^2}$. 
Since $\cL_2$ is external to $\cC$ (i.e. $k_2=0$), $\overline{\mathcal{L}_2}$ intersects $\overline{\mathcal{C}}$ in a pair of conjugate points $(\overline{P_2},\overline{P_2^\sigma})$. 
Let $P_1$ denote the unique point in which $\cL_1$ meets $\mathcal{C}$, and let $G_{\overline{\mathcal{C}}}\cong \PGL(2,q^2)$ denote the stabiliser of $\overline{\mathcal{C}}$ in $\PGL(3,q^2)$.
Consider another real point $R_1$ and pair of conjugate points $\overline{R_2}$ and $\overline{R_2^\sigma}$, associated with a second pair of real lines $\cL_1'\cL_2'$ with $(k_1,k_2)=(1,0)$. 
Let $\alpha$ denote the unique projectivity in $G_{\overline{\mathcal{C}}}$ mapping the triple $(P_1,\overline{P}_2,\overline{P_2^\sigma})$ to $(R_1,\overline{R}_2,\overline{R_2^\sigma})$.
Since $G_{\overline{\mathcal{C}}}$ acts sharply $3$-transitively on the points of $\overline{\mathcal{C}}$ and $\alpha\sigma\alpha^{-1}\sigma$ fixes the triple
$(P_1,\overline{P}_2,\overline{P_2^\sigma})$ pointwise, $\alpha$ commutes with $\sigma$ and therefore belongs to $\PGL(3,q)$. 
In other words, the stabiliser of $\cC$ in $\PGL(3,q)$ acts transitively on pairs of real lines meeting $\cC$ in the configuration $(k_1,k_2)=(1,0)$, so there is a unique $K$-orbit of solids arising from this configuration.
We denote this orbit by $\Omega_{10}$ and choose the representative
\[
\Omega_{10} : \begin{bmatrix}x&y&z\\y& y+\gamma t& t\\ z& t&y\end{bmatrix}, 
\quad \text{where} \quad \operatorname{Tr}(\gamma^{-1})=1, 
\]
obtained by taking $\cC = \cZ(X_0X_1+X_2^2)$, $\cL_1 = \cZ(X_1)$ and $\cL_2 = \cZ(X_0+X_1+\gamma X_2)$. 

\begin{lemma} \label{10dists}
A solid $S_{10} \in \Omega_{10}$ has point-orbit distribution $[1,1,q^2+2q-1,q^3-q]$, hyperplane-orbit distribution $[0,1,1,q-1]$, and stabiliser $K_{S_{10}} \cong C_{q-1} \times C_2$. 
In particular, $\Omega_{10} \not \in \{\Omega_1,\dots,\Omega_9\}$.
\end{lemma}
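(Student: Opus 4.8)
The plan is to compute each of the three invariants directly from the explicit representative $S_{10}$, following the template established for $\Omega_5,\dots,\Omega_9$. The structural input is the geometric configuration: $\cP(S_{10})$ is generated by the nonsingular conic $\cC = \cZ(X_0X_1+X_2^2)$ together with a pair of real lines $\cL_1 = \cZ(X_1)$ (a tangent, $k_1=1$) and $\cL_2 = \cZ(X_0+X_1+\gamma X_2)$ (external, $k_2=0$). First I would establish the hyperplane-orbit distribution $[0,1,1,q-1]$: applying Lemma~\ref{pencils} to the generic member $\cZ(X_0X_1+X_2^2 + \lambda X_1(X_0+X_1+\gamma X_2))$ of the pencil, I expect exactly one value of $\lambda$ to give a singular conic beyond the starting pair, and that the condition $\operatorname{Tr}(\gamma^{-1})=1$ forces this extra singular conic to be a pair of imaginary lines. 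Thus $a_{2r}=1$, $a_{2i}=1$, and since a pencil containing a nonsingular conic has at most three singular members (Lemma~\ref{3singular}), $a_3 = (q+1)-2 = q-1$, confirming $[0,1,1,q-1]$. This immediately distinguishes $\Omega_{10}$ from all of $\Omega_1,\dots,\Omega_9$ by comparison with the hyperplane-orbit distributions already computed.

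Next I would compute the point-orbit distribution $[1,1,q^2+2q-1,q^3-q]$. Writing out $\det M_P = 0$ for the general matrix in the representative gives the cubic surface $\Psi(S_{10})$ as an explicit ternary/quaternary cubic in $(X,Y,Z,T)$; I would count its points and separate them by rank. The rank-$1$ points are the images under $\nu$ of the base points of $\cP(S_{10})$ (Lemma~\ref{baseLemma}): since $\cC$ is tangent to $\cL_1$ at a single point and meets $\cL_2$ in no real points, the base consists of exactly one point, giving $r_1 = 1$. To split the rank-$2$ points between the nucleus plane and its complement, I would intersect $S_{10}$ with $\pi_n$ (the matrices with zero diagonal), which for this representative should be a single point, yielding $r_{2n}=1$. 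The remaining count $r_{2s}$ comes from subtracting $r_1$ and $r_{2n}$ from the total number of points of rank at most $2$ on $\Psi(S_{10})$, and $r_3$ is the complement in the $q^3+q^2+q+1$ points of the solid.

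For the stabiliser $K_{S_{10}} \cong C_{q-1}\times C_2$, I would argue geometrically rather than by brute-force matrix computation, mirroring the proofs of Lemmas~\ref{s4} and~\ref{89stabs}. Any element of $K$ fixing $S_{10}$ must fix the unique hyperplane of type $\cH_1$ containing it, hence the double line (equivalently the conic $\cC$), and must also fix the unique hyperplane of type $\cH_{2i}$, hence the unique point $P = S_{10}\cap\cV(\bF_q)$ off the corresponding conic. So $K_{S_{10}} \leqslant K_\cC \cap K_P \cong \GL(2,q)$. Within this group I would identify the further constraint coming from the pair of real lines $\cL_1\cL_2$: the stabiliser must fix the tangent line $\cL_1$ and the external line $\cL_2$ as a configuration, and appeal to transitivity properties of $\PGL(2,q)$ acting on $\cC$ to pin down the residual group as $C_{q-1}\times C_2$, with the $C_2$ swapping the conjugate pair of points cut out on $\overline{\cC}$ by $\overline{\cL_2}$. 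The main obstacle I anticipate is the stabiliser computation: establishing that the residual group is exactly $C_{q-1}\times C_2$ and not a proper subgroup or larger group requires care in tracking which projectivities of $\cC$ are compatible with fixing both the real tangency point and the conjugate pair on $\overline{\cL_2}$, and in verifying that the Frobenius-commuting argument from the preceding $\Omega_{10}$ uniqueness paragraph transfers cleanly to the stabiliser count.
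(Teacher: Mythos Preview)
Your treatment of the hyperplane- and point-orbit distributions is essentially the paper's approach and is fine. The issue is in your stabiliser argument.

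You write that any element of $K_{S_{10}}$ must fix ``the unique hyperplane of type $\cH_1$ containing it, hence the double line (equivalently the conic $\cC$)'', and conclude $K_{S_{10}} \leqslant K_\cC \cap K_P$. But the hyperplane-orbit distribution you just computed is $[0,1,1,q-1]$: there is \emph{no} hyperplane of type $\cH_1$ through $S_{10}$, and $\cC$ is a nonsingular conic (type $\cH_3$), of which there are $q-1$ in the pencil with none distinguished. So there is no reason for $K_{S_{10}}$ to fix $\cC$, and the containment $K_{S_{10}} \leqslant K_\cC$ is unjustified. What $K_{S_{10}}$ genuinely must fix are the unique $\cH_{2r}$-hyperplane (the real line pair $\cL_1\cL_2$) and the unique $\cH_{2i}$-hyperplane (the imaginary line pair). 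You could try to rebuild the argument from these two constraints, but pinning down the exact group this way is delicate.

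The paper takes a different and cleaner route for the stabiliser: it observes that the imaginary line pair $\cL_1'\cL_2' = \cZ(X_1^2+\gamma X_1X_2+X_2^2)$ meets $\cL_1$ in $(1,0,0)$ while $\cL_1\cap\cL_2 = (\gamma,0,1)$ is off $\cL_1'\cL_2'$, so upon extending to $\PG(2,q^2)$ the pencil $\overline{\cP(S_{10})}$ becomes a pencil of type $\Omega_8$. By Lemma~\ref{89stabs} its stabiliser in $\PGL(3,q^2)$ is $C_{q^2-1}\times C_2$, with the explicit generators of \eqref{Sigma8stab}; reading off which of these lie in $\PGL(3,q)$ gives $K_{S_{10}} \cong C_{q-1}\times C_2$. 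This ``extend and descend'' trick, recycling the $\Omega_8$ computation, is the key idea you are missing.
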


\begin{proof}
Let $S_{10}$ be the solid given above. 
The hyperplane-orbit distribution is calculated via Lemma~\ref{pencils}, and implies that $\Omega_{10}$ is distinct from all previously considered $K$-orbits. 
Explicitly, the only singular conic in $\cP(S_{10})$ other than $\cL_1\cL_2$ is the pair of imaginary lines $\cL_1'\cL_2' = \cZ(X_1^2+\gamma X_1X_2 + X_2^2)$. 
The cubic surface $\Psi(S_{10})= \cZ(T^2X+\gamma TXY+\gamma TZ^2+XY^2+T^3+TZ^2)$ meets the plane $\cZ(Y)$ in the union of the nonsingular conic $\cC' = \cZ(Y,TX+\gamma Z^2+T^2+Z^2)$ and the line $\cZ(Y,T)$, which is tangent $\cC'$. 
The remaining points of $\Psi(S_{10})$ comprise the set $\{(f(z,t),1,z, t): z,t \in \Fq\}$, $f(z,t) = (z^2(1+\gamma t)+1)/( t^2+\gamma t+1)$, which has size $q^2$. 
Moreover, $\Psi(S_{10})$ meets $\cV(\bF_q)$ in the (unique) point $(1,0,0,0)$, and the nucleus plane in the point $(0,0,1,0)$. 
To calculate the stabiliser, note that $\mathcal{L}_1'$ and $\mathcal{L}_2'$ meet in a point $P' = (1,0,0)$ which also lies on $\mathcal{L}_1$, while $\mathcal{L}_1$ and $\cL_2$ meet in a point $P = (\gamma,0,1)$ disjoint from $\mathcal{L}_1'\mathcal{L}_2'$. 
Extending to $\PG(2,q^2)$, we therefore obtain a pencil $\overline{\cP(S_{10})}$ of type $\Omega_8$. 
In particular, the stabiliser $G \leqslant \PGL(3,q^2)$ of $\overline{\mathcal{P}(S_{10})}$ is equal to the stabiliser of $\overline{P}$, $\overline{P}'$ and $\{\overline{Q},\overline{R}\} = \overline{\mathcal{L}}_2 \cap \overline{\mathcal{L}_1'\mathcal{L}_2'}$. 
Hence, $G \cong C_{q^2-1} \times C_2$ by Lemma~\ref{89stabs}, and comparing with \eqref{Sigma8stab} we see that over $\mathbb{F}_q$ we obtain $K_{S_{10}} \cong C_{q-1} \times C_2$.
\end{proof}

\subsubsection{$(k_1,k_2)=(1,2)$}

Next we consider the configuration $(k_1,k_2)=(1,2)$, namely the case in which $\mathcal{L}_1$ is a tangent to $\cC$ and $\mathcal{L}_2$ is a secant to $\mathcal{C}$. 
If the point $P = \cL_1 \cap \cL_2$ is not on $\cC$ then, by Remark \ref{rem:O8}, we obtain the $K$-orbit $\Omega_{8}$. 
Hence, we may assume that $P$ is on $\cC$, and since the stabiliser of a nonsingular conic acts $3$-transitively on the points of the conic, a unique $K$-orbit arises in this way. 
(Indeed, $2$-transitivity is sufficient to guarantee this.) 
We denote this $K$-orbit by $\Omega_{11}$ and choose the representative
\[
\Omega_{11}:\begin{bmatrix} x&y&z\\y&t&\cdot\\z&\cdot&y \end{bmatrix},
\] 
obtained by taking $\cC = \cZ(X_0X_1+X_2^2)$, $\cL_1 = \cZ(X_1)$ and $\cL_2 = \cZ(X_2)$. 

\begin{lemma} \label{s7}
A solid $S_{11} \in \Omega_{11}$ has point-orbit distribution $[2,1,q^2+q-2,q^3]$, hyperplane-orbit distribution $[0,1,0,q]$, and stabiliser $K_{S_{11}} \cong E_q : C_{q-1}$. 
In particular, $\Omega_{11} \not \in \{ \Omega_1,\dots,\Omega_{10} \}$.
\end{lemma}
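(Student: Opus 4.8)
The plan is to compute the three invariants of $S_{11}$ in turn, exploiting the explicit representative together with its associated pencil $\cP(S_{11})$, which is generated by the nonsingular conic $\cC=\cZ(X_0X_1+X_2^2)$ and the real line pair $\cZ(X_1X_2)=\cL_1\cL_2$ with $\cL_1=\cZ(X_1)$, $\cL_2=\cZ(X_2)$.

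First I would dispose of the hyperplane-orbit distribution, which is both the easiest invariant and the one that yields the final ``in particular'' assertion. Applying Lemma~\ref{pencils} to $\cZ(X_0X_1+X_2^2+\lambda X_1X_2)$, the singularity expression evaluates to the constant $1$, so every member of the pencil obtained by adding a multiple of $X_1X_2$ to $\cC$ is nonsingular; together with the single real line pair $\cZ(X_1X_2)$ this gives $\operatorname{OD}_{K,4}(S_{11})=[0,1,0,q]$. Comparing this $4$-tuple with the hyperplane-orbit distributions already recorded for $\Omega_1,\dots,\Omega_{10}$, none of which equals $[0,1,0,q]$, immediately gives $\Omega_{11}\notin\{\Omega_1,\dots,\Omega_{10}\}$.

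Next I would read off the point-orbit distribution from the cubic surface $\Psi(S_{11})=\cZ(xty+y^3+tz^2)$. Splitting the count according to whether $t=0$ (which forces $y=0$, giving a line of $q+1$ points) or $t\neq 0$ (where normalising $t=1$ leaves $q^2$ points) shows that $\Psi(S_{11})$ carries exactly $q^2+q+1$ points, i.e. $r_1+r_2=q^2+q+1$. By Lemma~\ref{baseLemma} the rank-$1$ points are the images under $\nu$ of the two base points $(1,0,0)$ and $(0,1,0)$ of $\cP(S_{11})$, namely $(1,0,0,0)$ and $(0,0,0,1)$, so $r_1=2$. Intersecting $S_{11}$ with the nucleus plane (the zero-diagonal condition $x=y=t=0$) leaves the single rank-$2$ point $(0,0,1,0)$, so $r_{2n}=1$. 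Subtracting then gives $r_{2s}=q^2+q-2$ and $r_3=(q^3+q^2+q+1)-(q^2+q+1)=q^3$, i.e. the point-orbit distribution $[2,1,q^2+q-2,q^3]$.

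The stabiliser is the substantive part. Working in $\PGL(3,q)$ with $\cP(S_{11})$, I would first argue that any $g\in K_{S_{11}}$ fixes the base points and generating lines individually: since $\cL_1\cL_2$ is the unique real line pair of the pencil it is preserved, and the two lines are distinguished by the fact that the tangent $\cL_1$ carries one base point while the secant $\cL_2$ carries both, so $g$ cannot interchange them; consequently $g$ fixes $P_1=\cL_1\cap\cL_2$ and the remaining base point $P_2\in\cL_2$. Writing $g$ in the upper-triangular form forced by fixing $P_1,P_2$, imposing that $g$ preserve the line pair (which kills the $(2,3)$-entry) and then that it map $\cC$ back into the pencil, I expect the single remaining condition to be that the product of the $(1,1)$- and $(2,2)$-entries equals the square of the $(3,3)$-entry (so, after normalising the latter to $1$, that $e=a^{-1}$), with the $(1,3)$-entry $c$ remaining free. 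The resulting group $\{(a,c):a\in\Fq^\times,\ c\in\Fq\}$ with multiplication $(a_1,c_1)(a_2,c_2)=(a_1a_2,\,c_1+a_1c_2)$ has a normal elementary abelian subgroup $\{(1,c)\}\cong E_q$ on which $\{(a,0)\}\cong C_{q-1}$ acts by scalar multiplication, giving $K_{S_{11}}\cong E_q:C_{q-1}$ and the order $q(q-1)$. The main obstacle is exactly this last computation, and in particular the observation that a nontrivial $g$ need \emph{not} fix $\cC$: it may carry $\cC$ to another nonsingular member $\cC_{c/k}$ of the pencil while still preserving the solid. This is why one must argue via the fixed base-point and line configuration rather than simply intersecting a conic stabiliser with a line stabiliser, and it is the step where care is needed both to ensure that no stabilising elements are overlooked and to correctly identify the semidirect-product (rather than direct-product) structure.
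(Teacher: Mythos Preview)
Your proof is correct and follows essentially the same route as the paper. The only cosmetic differences are that you slice the cubic surface $\Psi(S_{11})$ along $t=0$ whereas the paper slices along $y=0$, and you phrase the stabiliser computation entirely in $\PG(2,q)$ via the lines $\cL_1,\cL_2$ and base points, while the paper phrases it in $\PG(5,q)$ via the three distinguished points $P_1,P_2,Q$ and the spanning condition that $(0,1,0,0)$ be mapped into $S_{11}$; both routes arrive at the identical matrix condition $g_{11}g_{22}=g_{33}^2$ with $g_{13}$ free.
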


\begin{proof}
Let $S_{11}$ be the solid given above. 
Lemma~\ref{pencils} implies that the pair of real lines $\cL_1\cL_2$ is the only singular conic in the pencil $\cP(S_{11})$, so the hyperplane-orbit distribution of $S_{11}$ is $[0,1,0,q]$. 
In particular, $\Omega_{11}$ is distinct from all of $\Omega_1,\dots,\Omega_{10}$. 
The cubic surface $\Psi(\Omega_{11})=\cZ(XYT+Y^3+Z^2T)$ intersects the plane 
$\cZ(Y)$ in the two lines $\cZ(Y,Z)$ and $\cZ(Y,T)$ and contains $q^2-q$ additional points, comprising the set $\{(x,1,z,(x+z^2)^{-1}):x,z\in \bF_q;\;x\neq z^2\}$. 
There are two points in $S_{11} \cap \cV(\bF_q)$, namely $P_1 = (1,0,0,0)$ and $P_2 = (0,0,0,1)$, and one point $Q = (0,0,1,0)$ in which $S_{11}$ meets the nucleus plane. 
The stabiliser $K_{S_{11}}$ certainly fixes $Q$ and $\{P_1,P_2\}$. 
However, $P_1$ is the image under $\nu$ of the point of intersection of $\cL_1\cL_2$, so $K_{S_{11}}$ must fix $P_1$ and $P_2$ pointwise. 
An element of $K_{P_1} \cap K_{P_2} \cap K_Q$ is represented by a matrix $(g_{ij}) \in \GL(3,q)$ with $g_{12}=g_{21}=g_{23}=g_{31}=g_{32}=0$. 
It fixes $S_{11}$ if and only if it also maps the point $R = (0,1,0,0)$ into $S_{11}$. 
This occurs if and only if also $g_{11}g_{22} = g_{33}^2$, so $K_{S_{11}} \cong E_q : C_{q-1}$.
\end{proof}

\subsubsection{$(k_1,k_2)=(2,0)$}

We now show that the configuration $(k_1,k_2)=(2,0)$ also produces exactly one new $K$-orbit. 
As in the case $(k_1,k_2)=(1,0)$, consider the extension $\overline{\cC}$ of the nonsingular conic $\cC$ to $\PG(2,q^2)$. 
The extension $\overline{\cL}_1$ of the secant line $\cC$ meets $\overline{\cC}$ in two $\bF_q$-rational points, and the extension $\overline{\cL}_2$ of the external line $\cL_2$ meets $\cC$ in two $\bF_{q^2}$-rational points which are conjugate under the Frobenius collineation $\sigma$ induced by the automorphism $a \mapsto a^q$ of $\bF_{q^2}$. 
These four points form a frame of $\PG(2,q^2)$, since they lie on $\overline{\cC}$. 
Any two such configurations are therefore $\PGL(3,q^2)$-equivalent, via a unique $\alpha \in \PGL(3,q^2)$. 
Verifying that $\alpha \sigma\alpha^{-1}\sigma$ fixes the frame obtained from $\cL_1\cL_2$ implies that $\alpha\in \PGL(3,q)$, cf. the case $(k_1,k_2)=(1,0)$.  
Hence, we obtain at most one $K$-orbit from the configuration $(k_1,k_2)=(2,0)$. 
We verify below that this orbit is distinct from all previously considered orbits, and therefore label it $\Omega_{12}$ and choose the representative
\[
\Omega_{12} : \begin{bmatrix}x&y&z\\y&t&\gamma y+z\\z&\gamma y+z&y\end{bmatrix}, 
\quad \text{where} \quad \operatorname{Tr}(\gamma^{-1})=1,
\]
obtained by taking $\cC = \cZ(X_0X_1+X_2^2)$, $\cL_1 = \cZ(X_2)$ and $\cL_2 = \cZ(X_0+X_1+\gamma X_2)$. 

\begin{lemma}\label{s15}
A solid of type $\Omega_{12}$ has point-orbit distribution $[2,1,q^2+q-2,q^3]$, hyperplane-orbit distribution $[0,1,0,q]$, and stabiliser $K_{S_{12}} \cong C_2^2$. 
In particular, $\Omega_{12} \not \in \{\Omega_1,\dots,\Omega_{11}\}$. 
\end{lemma}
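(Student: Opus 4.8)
The plan is to follow the template already used for the earlier orbits, treating the three invariants in turn (all of which are $K$-invariant, so I may work throughout with the stated representative $S_{12}$) and then settling distinctness at the end. First I would compute the hyperplane-orbit distribution via Lemma~\ref{pencils}, applied to the generic member $\cZ(X_0X_1 + X_2^2 + \lambda X_2(X_0+X_1+\gamma X_2))$ of $\cP(S_{12})$. Substituting the coefficients into the criterion of Lemma~\ref{pencils} reduces the singularity condition to $\lambda^2 + \gamma\lambda + 1 = 0$. Since $\operatorname{Tr}(\gamma^{-1})=1$ and the trace is invariant under squaring, $\operatorname{Tr}(\gamma^{-2})=1$, so Lemma~\ref{quadratic} shows this quadratic has no root in $\bF_q$; hence every member with $\lambda \in \bF_q$ is nonsingular, and the only singular conic is the generating pair of real lines $\cL_1\cL_2$. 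This yields the distribution $[0,1,0,q]$.

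Next I would handle the point-orbit distribution. The base of $\cP(S_{12})$ is $\cC \cap (\cL_1 \cup \cL_2)$, which consists of the two points of the secant $\cL_1 \cap \cC$ together with $\cL_2 \cap \cC = \varnothing$ (the external line), so by Lemma~\ref{baseLemma} there are exactly two rank-$1$ points, i.e. $r_1=2$. I would then form the cubic surface $\Psi(S_{12}) = \cZ(\det M)$; expanding the determinant in characteristic $2$ gives $\Psi(S_{12}) = \cZ(XYT + \gamma^2 XY^2 + XZ^2 + Y^3 + TZ^2)$. A count of its points (by casework on whether $Y=0$, as in the preceding lemmas) should yield $q^2+q+1$ points, forcing $r_3 = q^3$ and $r_1 + r_2 = q^2+q+1$. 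Finally, the unique point of $S_{12}$ with zero main diagonal is $(0,0,1,0)$, so exactly one rank-$2$ point lies in the nucleus plane; this gives $r_{2n}=1$ and $r_{2s} = q^2+q-2$, hence the point-orbit distribution $[2,1,q^2+q-2,q^3]$.

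For the stabiliser I would use Galois descent, exactly as in the treatment of $\Omega_{10}$. Extending $\cC$, $\cL_1$, $\cL_2$ to $\PG(2,q^2)$, the external line $\cL_2$ becomes a secant meeting $\overline{\cC}$ in a conjugate pair $\{C, C^\sigma\}$, while $\cL_1$ still meets $\overline{\cC}$ in the two $\bF_q$-rational points $A,B$; these four points form a frame, so $\overline{\cP(S_{12})}$ is of type $\Omega_9$, with stabiliser $\operatorname{Sym}_4$ realised as the permutation group of $\{A,B,C,C^\sigma\}$ (Lemma~\ref{89stabs}). Since the Frobenius collineation $\sigma$ acts on this $\operatorname{Sym}_4$ by conjugating the induced permutation by the transposition $(C\;C^\sigma)$ that it induces on the frame, the elements defined over $\bF_q$ are precisely those lying in the centraliser of $(C\;C^\sigma)$, namely $\langle (A\;B),(C\;C^\sigma)\rangle \cong C_2^2$; each such permutation is realised by an element of $K$ over $\bF_q$ by the sharp $3$-transitivity of the stabiliser of $\overline{\cC}$. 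Hence $K_{S_{12}} \cong C_2^2$.

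Distinctness is then immediate: the point- and hyperplane-orbit distributions already separate $\Omega_{12}$ from $\Omega_1,\dots,\Omega_{10}$, the only orbit sharing both being $\Omega_{11}$, and there the stabiliser resolves the ambiguity, since $|K_{S_{11}}| = q(q-1) > 4 = |K_{S_{12}}|$ for $q>2$. The main obstacle I anticipate is the point-count on $\Psi(S_{12})$, which requires a careful parametrisation of its affine part; by contrast the stabiliser argument, though it needs the descent-and-centraliser bookkeeping, is essentially dictated by the $\Omega_9$ analysis and should go through cleanly.
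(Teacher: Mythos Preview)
Your computations of the hyperplane- and point-orbit distributions match the paper's approach and are correct (in particular your explicit determinant and the count $|\Psi(S_{12})|=q^2+q+1$ agree with the paper's ``two lines in $\cZ(Y)$ plus $q^2-q$ further points'').

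For the stabiliser, however, you take a genuinely different route. The paper argues directly over $\bF_q$: it observes that $K_{S_{12}}$ fixes the unique nucleus-plane point $Q=(0,0,1,0)$ and the pair $\{P_1,P_2\}$ of rank-$1$ points, exhibits the coordinate swap $X_0\leftrightarrow X_1$ to show the induced action on $\{P_1,P_2\}$ is nontrivial, and then computes the kernel by imposing the matrix conditions that send the point $(0,1,0,0)$ back into $S_{12}$, obtaining a group of order $2$ there as well. Your descent argument via $\overline{\cP(S_{12})}\in\Omega_9$ over $\bF_{q^2}$ is equally valid and mirrors what the paper itself does for $\Omega_{10}$, $\Omega_{13}$, $\Omega_{14}$, $\Omega_{15}$; it has the advantage of being entirely conceptual and matrix-free. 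One caveat: your final sentence invokes ``sharp $3$-transitivity of the stabiliser of $\overline{\cC}$'', but the elements of $\overline{G}\cong\operatorname{Sym}_4$ realising the centralising permutations need not fix $\overline{\cC}$, so this is not the right tool. The correct justification is that $\overline{G}$ acts \emph{faithfully} on the frame (the pointwise stabiliser of a frame in $\PGL(3,q^2)$ is trivial); hence if $\pi$ centralises $(C\;C^\sigma)$ then the unique $\alpha\in\overline{G}$ inducing $\pi$ satisfies $\sigma\alpha\sigma^{-1}=\alpha$ (both induce $\pi$), so $\alpha\in\PGL(3,q)$. With that correction your argument is complete.
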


\begin{proof}
The proof is similar to that of Lemma~\ref{s7} (for $\Omega_{11}$). 
Taking $S_{12}$ to be the solid defined above, Lemma~\ref{pencils} yields the hyperplane-orbit distribution. 
The cubic surface $\Psi(S_{12})$ meets the plane $\cZ(Y)$ in two lines and contains $q^2-q$ further points. 
It meets $\cV(\bF_q)$ in the two points $P_1=(1,0,0,0)$ and $P_2=(0,0,0,1)$, and the nucleus plane in the point $Q = (0,0,1,0)$. 
The stabiliser $K_{S_{12}}$ must fix $Q$ and $\{P_1,P_2\}$. 
It induces a permutation group of order $2$ on $\{P_1,P_2\}$ because e.g. the element of $K$ represented by the matrix obtained by swapping the first and second columns of the identity fixes $S_{12}$ and swaps $P_1$ and $P_2$. 
An element of $K_{P_1} \cap K_{P_2} \cap K_Q$ is represented by a matrix $(g_{ij}) \in \GL(3,q)$ with $g_{12}=g_{21}=g_{31}=g_{32}=0$, $g_{22}=g_{11}$ and $g_{23}=g_{13}$. 
It fixes $S_{12}$ if and only if it also maps the point $(0,1,0,0)$ into $S_{12}$, which occurs if and only if $g_{33}=g_{11}$ and $g_{13} \in \{0,\gamma g_{11}\}$. 
Factoring out scalars, we see that the kernel of the action of $K_{S_{12}}$ on $\{P_1,P_2\}$ also has order $2$. 
Therefore, $K_{S_{12}} \cong C_2^2$. 
The point- and hyperplane-orbit distributions of $S_{12}$ imply that $\Omega_{12}$ is distinct from all previously considered $K$-orbits, with the possible exception of $\Omega_{11}$. 
However, $K_{S_{12}} \cong C_2^2$ is not isomorphic to $K_{S_{11}} \cong E_q : C_{q-1}$ (for any $q$), so also $\Omega_{12} \neq \Omega_{11}$.
\end{proof}

\begin{remark} \label{o6Remark}
\textnormal{It is also possible to distinguish between the $K$-orbits $\Omega_{11}$ and $\Omega_{12}$ using their line-orbit distributions, rather than their stabilisers, as follows. 
As per \cite{lines}, a line of type $o_6$ is characterised by having point-orbit distribution $[1,1,q-1,0]$. 
Considering again the solids $S_i \in \Omega_i$, $i \in \{11,12\}$, used above, we therefore see that in each case the only candidates for lines of type $o_6$ are the two lines $\langle Q,P_1 \rangle$ and $\langle Q,P_2 \rangle$, where $Q=(0,0,1,0)$ is the unique point in which $S_i$ meets the nucleus plane, and $P_1 = (1,0,0,0)$ and $P_2 = (0,0,0,1)$ are the two points of rank $1$ in $S_i$. 
Only one of these four lines has type $o_6$, namely $\langle Q,P_1 \rangle$ in the case $i=11$. 
Therefore, $S_{11}$ and $S_{12}$ have different line-orbit distributions, and so $\Omega_{11} \neq \Omega_{12}$. 
(We note also that there is a typo in \cite[Table~4]{lines}: the fifth column should say that a line of type $o_6$ contains {\em one} point of the nucleus plane. 
This is, however, clear from the representative given in \cite[Table~2]{lines}.)}
\end{remark}

\subsubsection{$(k_1,k_2)=(0,0)$} \label{subsec:(0,0)}

Finally, we show that the configuration $(k_1,k_2)=(0,0)$ also produces a unique $K$-orbit. 
It suffices to use an argument similar to the one used in the case $(k_1,k_2)=(2,0)$. 
This time, both $\cL_1$ and $\cL_2$ are external to $\cC$ and so both give rise to pairs of conjugate points (with respect to the Frobenius collineation $\sigma$). 
The four points again form a frame, so the same argument as before shows that at most one $K$-orbit arises. 
We denote this orbit by $\Omega_{13}$ and choose the representative
\[
\Omega_{13} : \begin{bmatrix} x&y&z\\y&\gamma x+y&t\\z&t&\gamma x+z \end{bmatrix}, 
\quad \text{where} \quad \operatorname{Tr}(\gamma)=1,
\]
obtained as follows. 
Consider the two pairs of imaginary lines $\cC_i=\cZ(f_i)$ where $f_1=\gamma X_0^2+X_0X_i+X_i^2$, $i \in \{1,2\}$. 
Then the pencil $\cP(S_{13})$ corresponding to the solid $S_{13}$ defined above is generated by $\cC_1$ and $\cC_2$. 
We must show that $\cP(S_{13})$ contains a nonsingular conic $\cC$ and a pair of real lines external to $\cC$. 
By Lemma~\ref{pencils}, the conic $\cZ(\lambda_1 f_1 + \lambda_2 f_2)$ is singular if and only if $\lambda_1=0$, $\lambda_2=0$ or $\lambda_1=\lambda_2$. 
Setting $\lambda_1=\lambda_2$ yields the pair of real lines $\cL_1=\cZ(X_1+X_2)$ and $\cL_2 = \cZ(X_0+X_1+X_2)$, both of which are external to every nonsingular conic in the pencil, by Lemma~\ref{quadratic}.

\begin{lemma} \label{13dists}
A solid $S_{13} \in \Omega_{13}$ has point-orbit distribution $[0,1,q^2+3q,q^3-2q]$, hyperplane-orbit distribution $[0,1,2,q-2]$, and stabiliser $K_{S_{13}} \cong C_2^2 : C_2$. 
In particular, $\Omega_{13} \not \in \{ \Omega_1,\dots,\Omega_{12} \}$.
\end{lemma}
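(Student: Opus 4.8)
The plan is to follow the same three-pronged template used for the previous orbits $\Omega_5,\dots,\Omega_{12}$: compute the hyperplane-orbit distribution first, then the point-orbit distribution, and finally the stabiliser, verifying along the way that $\Omega_{13}$ is genuinely new.

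\medskip

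\noindent\textbf{Hyperplane-orbit distribution.} The singular conics in $\cP(S_{13})$ have already been identified in the paragraph preceding the statement: by Lemma~\ref{pencils}, the conic $\cZ(\lambda_1 f_1+\lambda_2 f_2)$ is singular precisely when $\lambda_1=0$, $\lambda_2=0$, or $\lambda_1=\lambda_2$, giving exactly three singular conics. Two of these are the generating pairs of imaginary lines $\cC_1,\cC_2$, and the third is the pair of real lines $\cL_1\cL_2$ obtained at $\lambda_1=\lambda_2$. This immediately yields $a_1=0$, $a_{2r}=1$, $a_{2i}=2$, and hence $a_3=(q+1)-3=q-2$, so the hyperplane-orbit distribution is $[0,1,2,q-2]$ as claimed. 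Since no previously treated orbit has this distribution (in particular $a_{2i}=2$ is new), this already separates $\Omega_{13}$ from $\Omega_1,\dots,\Omega_{12}$.

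\medskip

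\noindent\textbf{Point-orbit distribution.} I would compute $\det$ of the matrix representing $S_{13}$ to obtain the cubic surface $\Psi(S_{13})=S_{13}\cap\cV(\bF_q)^2$, then count its points by the same stratification used for $\Omega_{10}$ and $\Omega_{11}$: intersect with a convenient plane (here the nucleus-plane direction $\cZ(Y)$ is the natural choice) and parametrise the remaining affine piece. The rank-$1$ points are, by Lemma~\ref{baseLemma}, the images under $\nu$ of the base points of $\cP(S_{13})$; since the base is the intersection of two pairs of conjugate imaginary lines, it has no $\bF_q$-rational points, forcing $r_1=0$, consistent with the leading entry $[0,\dots]$. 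The single nucleus-plane point ($r_{2n}=1$) should fall out of the intersection with $\cZ(Y)$, and $r_{2s}$ and $r_3$ follow by subtracting from the total count of rank-$\le 2$ points on $\Psi(S_{13})$.

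\medskip

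\noindent\textbf{Stabiliser.} This is where I expect the main obstacle, and I would handle it exactly as in Lemma~\ref{10dists} for $\Omega_{10}$: pass to the quadratic extension $\PG(2,q^2)$. Over $\bF_{q^2}$ the two pairs of imaginary lines split into genuine line pairs, and the configuration of the base (the frame coming from the four conjugate points) should reveal that $\overline{\cP(S_{13})}$ becomes a pencil of one of the previously classified real types---plausibly $\Omega_9$, given that four base points arise---so that the $\bF_{q^2}$-stabiliser is a known group and the $\bF_q$-stabiliser is its subgroup commuting with the Frobenius $\sigma$. The asserted answer $K_{S_{13}}\cong C_2^2:C_2$ (order $8$) suggests instead that the relevant extension picture is that $\PGL(3,q)$ permutes the three singular conics $\{\cC_1,\cC_2,\cL_1\cL_2\}$ while fixing the nonsingular members setwise; the $C_2^2$ kernel fixing each singular conic (mirroring the diagonal/swap generators of the $\Omega_8$ and $\Omega_{12}$ stabilisers) is extended by a $C_2$ that swaps the two imaginary pairs $\cC_1\leftrightarrow\cC_2$. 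The hard part is pinning down this extension structure precisely---verifying that the swap of $\cC_1$ and $\cC_2$ is realised by an element of $K$ (e.g. the projectivity exchanging $X_1$ and $X_2$), checking it does \emph{not} centralise the $C_2^2$ so that the extension is the nonabelian $C_2^2:C_2\cong D_8$ rather than $C_2^3$, and confirming there is nothing larger by bounding $K_{S_{13}}$ inside the stabiliser of the base frame and the distinguished point and line found above.
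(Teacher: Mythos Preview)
Your proposal is correct and matches the paper's approach in all three parts: the hyperplane-orbit distribution is read off from the discussion preceding the lemma, the point-orbit distribution is obtained by counting points on $\Psi(S_{13})$ (the paper slices with the plane $\cZ(X)$ rather than $\cZ(Y)$, finding three concurrent lines there plus $q^2$ further points, but this is immaterial), and the stabiliser is computed by extending to $\PG(2,q^2)$, where the pencil indeed has type $\Omega_9$ with stabiliser $\operatorname{Sym}_4$ on the base frame $B=\{P_1,P_2,P_3,P_4\}$. For the upper bound on $K_{S_{13}}$ the paper's argument is slightly sharper than what you sketch: rather than computing the centraliser of Frobenius abstractly, it observes that the unique real line pair $\cL_1\cL_2$ partitions $B$ over $\bF_{q^2}$ into two pairs $\{P_1,P_2\}$ and $\{P_3,P_4\}$, so $K_{S_{13}}$ is contained in the setwise stabiliser of this partition inside $\operatorname{Sym}(B)$, namely $\langle (P_1,P_2),(P_3,P_4),(P_1,P_3)(P_2,P_4)\rangle \cong C_2\wr C_2 \cong D_8$; equality is then verified by exhibiting three explicit matrices (including the $X_1\leftrightarrow X_2$ swap you mention).
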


\begin{proof}
Let $S_{13}$ be the solid defined above. 
The preceding discussion gives the hyperplane-orbit distribution, which implies that $\Omega_{13} \not \in \{ \Omega_1,\dots,\Omega_{12} \}$. 
The cubic surface $\Psi(S_{13})$ intersects the plane $\cZ(X)$ in three concurrent lines $\cZ(X,Y)$, $\cZ(X,Z)$ and $\cZ(X,Y+Z)$, and contains a further $q^2$ points, parameterised as $(1,y,z,f(y,z))$ where 
$f(y,z)=(\gamma+\gamma y +\gamma z+\gamma y^2+\gamma z^2+yz+y^2z+yz^2)^{1/2}$.  
It is disjoint from $\cV(\bF_q)$ and meets the nucleus plane in a unique point, so the point-orbit distribution of $S_{13}$ is $[0,1,q^2+3q,q^3-2q]$. 
It remains to calculate the stabiliser. 
As per the discussion preceding the lemma, if we extend $\cP(S_{13})$ to $\PG(2,q^2)$ we obtain a pencil with four base points comprising a frame $B=\{P_1,P_2,P_3,P_4\}$, say.  
This pencil has type $\Omega_9$, so its stabiliser $\overline{G} \leqslant \PGL(3,q^2)$ is isomorphic to $\operatorname{Sym}_4$, by Lemma~\ref{89stabs}. 
The stabiliser $G = \overline{G} \cap \PGL(3,q)$ of $\mathcal{P}(S_{13})$ is therefore a subgroup of $\operatorname{Sym}_4$. 
Now, $\mathcal{P}(S_{13})$ also contains a unique pair of real lines $\mathcal{L}_1\mathcal{L}_2$, and over $\mathbb{F}_q^2$ each of these lines meets two points of $B$, say $\overline{\mathcal{L}}_1 = \langle P_1,P_2 \rangle$ and $\overline{\mathcal{L}}_2 = \langle P_3,P_4 \rangle$. 
Since $G$ fixes $\mathcal{L}_1\mathcal{L}_2$, it fixes $\{ \{P_1,P_2\}, \{P_3,P_4\} \}$ over $\mathbb{F}_q^2$, and therefore induces a subgroup of the permutation group $H = \langle (P_1,P_2), (P_3,P_4), (P_1,P_3)(P_2,P_4) \rangle \cong C_2^2:C_2$ on $B$. 
Conversely, a calculation shows that $K_{S_{13}}$ contains the group generated by the elements of $K$ represented by the matrices
\[
\left[
\begin{matrix}
1 & 0 & 0 \\
1 & 1 & 0 \\
0 & 0 & 1
\end{matrix}
\right], 
\quad
\left[
\begin{matrix}
1 & 0 & 0 \\
0 & 1 & 0 \\
1 & 0 & 1
\end{matrix}
\right] 
\quad \text{and} \quad
\left[
\begin{matrix}
1 & 0 & 0 \\
0 & 0 & 1 \\
0 & 1 & 0
\end{matrix}
\right],
\]
which is isomorphic to $H$. 
We therefore conclude that $K_{S_{13}} \cong C_2^2 : C_2$.
\end{proof}

\subsection{Solids contained in no hyperplanes of type $\cH_1$ or $\cH_{2r}$} \label{ss4.3}

Of the solids $S$ with hyperplane-orbit distribution $\operatorname{OD}_{K,4}(S) = [a_1,a_{2r},a_{2i},a_3]$ where $1 \leqslant a_3 \leqslant q$, we have now classified those for which at most one of $a_1$ and $a_{2r}$ is $0$. 
It therefore remains to consider the case in which $\operatorname{OD}_{K,4}(S) = [0,0,a_{2i},a_3]$. 
This assumption implies, of course, that $a_{2i} \geqslant 1$, since $a_3 \leqslant q$. 
On the other hand, Lemma~\ref{prop}(ii) implies that $a_{2i} \leqslant 1$, since $a_{2r}=0$ and $b \geqslant 0$. 
Therefore, we must have $\operatorname{OD}_{K,4}(S)=[0,0,1,q]$. 
Note that this then forces $b=0$ in Lemma~\ref{prop}, so that $\cP(S)$ must have empty base.  
We claim that the hyperplane-orbit distribution $[0,0,1,q]$ gives rise to a unique $K$-orbit, with representative
\begin{equation} \label{14rep}
\Omega_{14} : \begin{bmatrix} x &y&\gamma x+y+\gamma t\\y&\gamma x+y&z\\\gamma x+y+\gamma t&z&t \end{bmatrix}, 
\quad \text{where} \quad \operatorname{Tr}(\gamma)=1.
\end{equation}
This solid, call it $S_{14}$, is obtained from the pencil generated by the nonsingular conic $\cZ(X_1^2+X_0X_2+\gamma X_2^2)$ and the pair of imaginary lines $\cL_1\cL_2 = \cZ(\gamma X_0^2 + X_0X_1 + X_1^2)$. 
Lemma~\ref{pencils} confirms that $\cP(S_{14})$ contains no other singular conics, and so $S_{14}$ has the desired hyperplane-orbit distribution; it also has empty base, since the unique real point $(0,0,1)$ on $\cL_1\cL_2$ does not lie on any of the nonsingular conics.

\begin{lemma} \label{14dists}
A solid of type $\Omega_{14}$ has point-orbit distribution $[0,1,q^2+q,q^3]$ and hyperplane-orbit distribution $[0,0,1,q]$.  
In particular, $\Omega_{14} \not \in \{ \Omega_1,\dots,\Omega_{13} \}$.
\end{lemma}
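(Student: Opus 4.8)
The plan is to follow the template established in the preceding lemmas (e.g. Lemma~\ref{13dists}): compute the cubic surface $\Psi(S_{14}) = \cZ(\det M)$ for the representative in \eqref{14rep}, count its points to obtain the rank distribution, and then separate the rank-$2$ points according to whether they lie in the nucleus plane $\pi_n$. Two of the required facts are already in hand from the discussion preceding the lemma: the hyperplane-orbit distribution is $[0,0,1,q]$ (by Lemma~\ref{pencils}, the pencil $\cP(S_{14})$ contains exactly one pair of imaginary lines and $q$ nonsingular conics), and the base of $\cP(S_{14})$ is empty, so Lemma~\ref{baseLemma} gives $r_1 = 0$ immediately. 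Thus the substantive task is to compute the point-orbit distribution.

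First I would expand $\det M$ for the matrix in \eqref{14rep}. In characteristic $2$ the symmetric determinant collapses to $adf + ae^2 + b^2f + c^2d$ in terms of the entries, and the resulting cubic contains $z$ only through the single monomial $xz^2$. I would then count the points of $\Psi(S_{14})$ by splitting on whether $x=0$ or $x=1$. Setting $x = 0$ reduces $\det M$ to $y(y^2 + yt + \gamma^2 t^2)$; the quadratic factor is anisotropic over $\bF_q$, having no zero with $(y,t) \neq (0,0)$, because $\operatorname{Tr}(\gamma^2) = \operatorname{Tr}(\gamma) = 1$ forces the dehomogenised quadratic to have no root by Lemma~\ref{quadratic}. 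Hence the only points of $\Psi(S_{14})$ in the plane $\cZ(X)$ lie on the line $\cZ(X,Y)$, contributing $q+1$ points. For $x = 1$, since $z$ occurs only as $z^2$ and every element of $\bF_q$ has a unique square root in characteristic $2$, each pair $(y,t) \in \bF_q^2$ yields exactly one admissible $z$, giving $q^2$ further points.

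Summing gives $q^2 + q + 1$ points of rank at most $2$, whence $r_3 = (q^3+q^2+q+1) - (q^2+q+1) = q^3$; and since the base is empty, all of these points have rank exactly $2$. To split them I would intersect $S_{14}$ with $\pi_n$ by imposing a zero diagonal on the matrix, i.e. $x = 0$, $\gamma x + y = 0$, $t = 0$. These conditions force $x = y = t = 0$, leaving the single point $(0,0,1,0)$, so $r_{2n} = 1$ and $r_{2s} = (q^2+q+1) - 1 = q^2 + q$. This yields the claimed distribution $[0,1,q^2+q,q^3]$. Finally, the hyperplane-orbit distribution $[0,0,1,q]$ differs from that of every orbit $\Omega_1,\dots,\Omega_{13}$ recorded above, which gives $\Omega_{14} \notin \{\Omega_1,\dots,\Omega_{13}\}$.

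The main obstacle is the $x = 0$ analysis: one must see that the seemingly two-component intersection of $\Psi(S_{14})$ with the plane $\cZ(X)$ actually collapses to the single line $\cZ(X,Y)$, and this hinges precisely on the trace condition $\operatorname{Tr}(\gamma)=1$ built into the representative. Without the anisotropy of $y^2+yt+\gamma^2t^2$ the point count—and in particular the value $r_3 = q^3$—would be incorrect.
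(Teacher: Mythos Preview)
Your proposal is correct and follows essentially the same route as the paper: compute $\Psi(S_{14})$, slice by $x=0$ and $x=1$, and intersect with $\pi_n$. The paper simply asserts that $\Psi(S_{14})\cap\cZ(X)=\cZ(X,Y)$ and parameterises the remaining $q^2$ points as $(1,y,f(y,t),t)$ for an explicit square root $f$, whereas you supply the justification (the anisotropy of $y^2+yt+\gamma^2 t^2$ via $\operatorname{Tr}(\gamma^2)=\operatorname{Tr}(\gamma)=1$) that the paper leaves implicit.
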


\begin{proof} 
It remains to calculate the point-orbit distribution. 
Taking $S_{14} \in \Omega_{14}$ as above, we calculate that the cubic surface $\Psi(S_{14})$ meets the plane $\cZ(X)$ in the line $\cZ(X,Y)$ and contains a further $q^2$ points, parameterised as $(1,y,f(y,t),t)$ with
$f(y,t)=(\gamma^2t^2+\gamma y t^2+ \gamma+ \gamma y + \gamma t+ \gamma y^2+ ty +ty^2+y^3)^{1/2}$. 
It is disjoint from $\cV(\bF_q)$ (since $\cP(S_{14})$ has empty base) and meets the nucleus plane in one point.
\end{proof}

We now show that all solids with hyperplane-orbit distribution $[0,0,1,q]$ belong to the $K$-orbit $\Omega_{14}$, before finally calculating the stabiliser of such a solid. 
 
\begin{lemma}
The solids with hyperplane-orbit distribution $[0,0,1,q]$ form one $K$-orbit.
\end{lemma}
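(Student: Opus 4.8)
The plan is to realise any solid $S$ with $\operatorname{OD}_{K,4}(S)=[0,0,1,q]$ over a suitable extension field, where its base becomes a frame, and then to descend using the sharp transitivity of $\PGL$ on ordered frames, exactly as in the configurations treated in Sections~\ref{sss422} and \ref{subsec:(0,0)}. Write $\cP(S)$ as the pencil generated by a nonsingular conic $\cC$ and the unique pair of imaginary lines $\cL_1\cL_2$; by Lemma~\ref{prop} this pencil has empty base, so in particular the real point $\cL_1\cap\cL_2$ does not lie on $\cC$. I would fix the representative $S_{14}$ of Lemma~\ref{14dists} and show that $S$ lies in the same $K$-orbit as $S_{14}$.

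First I would locate the splitting field of the configuration. Over $\bF_{q^2}$ the imaginary pair $\cL_1\cL_2$ splits into two lines $\overline{\cL}_1,\overline{\cL}_2$ interchanged by the Frobenius collineation $\sigma:a\mapsto a^q$. The key point is that each $\overline{\cL}_i$ is \emph{external} to $\overline{\cC}$ over $\bF_{q^2}$; equivalently, $\overline{\cP(S)}$ has type $\Omega_{13}$ over $\bF_{q^2}$. For the representative this is a direct check via Lemma~\ref{pencils} and the trace criterion of Lemma~\ref{quadratic}, and for general $S$ it can be forced combinatorially: if instead the $\overline{\cL}_i$ were secant, then $\overline{\cP(S)}$ would have type $\Omega_9$ over $\bF_{q^2}$, and since $\sigma$ would then act fixed-point-freely on the resulting $\bF_{q^2}$-frame (interchanging $\overline{\cL}_1$ and $\overline{\cL}_2$), exactly one of the three partitions of that frame would yield a pair of \emph{real} lines over $\bF_q$, contradicting $a_{2r}=0$ (the tangent case is excluded similarly by $a_1=0$). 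Consequently the base of $\cP(S)$ becomes a frame $B=\{P_1,P_2,P_3,P_4\}$ only over $\bF_{q^4}$, with $\overline{\cL}_1=\langle P_1,P_2\rangle$ and $\overline{\cL}_2=\langle P_3,P_4\rangle$. Because $\cP(S)$ has empty base over $\bF_q$ and $\overline{\cP(S)}$ has empty base over $\bF_{q^2}$, neither $\sigma$ nor $\sigma^2$ fixes a point of $B$; as $\sigma$ interchanges $\overline{\cL}_1$ and $\overline{\cL}_2$, it therefore acts on $B$ as a $4$-cycle, say $(P_1\,P_3\,P_2\,P_4)$. This $4$-cycle structure is precisely what separates $\Omega_{14}$ from the orbit $\Omega_{13}$, on which the analogous Frobenius is a double transposition.

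To descend, I would use that $\Omega_9$ is a single orbit over every (even) field (Section~\ref{kk=22}), so that over $\bF_{q^4}$ the pencils $\overline{\cP(S)}$ and $\overline{\cP(S_{14})}$ are $\PGL(3,q^4)$-equivalent. Since $\PGL(3,q^4)$ acts sharply transitively on ordered frames, there is a \emph{unique} $\alpha\in\PGL(3,q^4)$ carrying $(P_1,P_2,P_3,P_4)$ to the corresponding ordered frame of $S_{14}$, where the two orderings are chosen so that $\sigma$ induces the same $4$-cycle on both (possible because both solids have distribution $[0,0,1,q]$, hence the same $\sigma$-cycle type and line incidences on their frames). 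Writing $c$ for this common $4$-cycle and $\alpha(P_i)=P_i'$, one computes $\alpha^\sigma(P_i)=\sigma\alpha\sigma^{-1}(P_i)=\sigma\bigl(\alpha(P_{c^{-1}(i)})\bigr)=\sigma(P'_{c^{-1}(i)})=P_i'$ for every $i$; thus $\alpha^\sigma$ agrees with $\alpha$ on the frame $B$, and by uniqueness $\alpha^\sigma=\alpha$. Hence $\alpha$ commutes with $\sigma$, so $\alpha\in\PGL(3,q)$, and the induced element of $K$ maps $S$ to $S_{14}$, proving the claim.

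The main obstacle is the first step: establishing rigorously, for \emph{every} solid with distribution $[0,0,1,q]$ and not merely for the chosen representative, that the splitting field of the base is $\bF_{q^4}$ and that $\sigma$ induces a genuine $4$-cycle on $B$. Once the external configuration over $\bF_{q^2}$ is pinned down—by the combinatorial exclusion above—the $4$-cycle structure and the whole descent follow formally. A secondary point requiring care is the compatible choice of frame orderings, since it is exactly this compatibility (encoding how the single imaginary pair dictates the $\sigma$-pairing of the base points) that makes the cocycle $\alpha^{-1}\alpha^\sigma$ vanish and completes the proof.
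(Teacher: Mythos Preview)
Your argument is correct, and it takes a genuinely different route from the paper's proof once the common first step is done. Both proofs begin by showing that over $\bF_{q^2}$ the lines $\overline{\cL}_1,\overline{\cL}_2$ are external to $\overline{\cC}$, so that $\overline{\cP(S)}$ has type $\Omega_{13}$; your exclusion of the tangent and secant configurations via $a_1=0$ and $a_{2r}=0$ is exactly the paper's. From that point the two diverge. The paper stays over $\bF_{q^2}$: it takes any $\alpha\in\PGL(3,q^2)$ carrying $\overline{\cP(S)}$ to $\overline{\cP(S')}$, chooses an auxiliary frame $(R_1,R_2,R_1^\sigma,R_2^\sigma)$ with $R_1,R_2$ arbitrary on $\overline{\ell}\setminus\overline{\ell^\sigma}$, and argues (after adjusting $\alpha$ within the $\Omega_{13}$-stabiliser coset) that $\alpha\sigma\alpha^{-1}\sigma$ fixes this frame, forcing $\alpha\in\PGL(3,q)$. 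You instead pass to $\bF_{q^4}$, where the pencil acquires type $\Omega_9$ and a \emph{canonical} base frame $B$; since a pencil of type $\Omega_9$ is uniquely determined by its base, the unique $\alpha\in\PGL(3,q^4)$ matching the ordered frames automatically maps pencil to pencil, and your $4$-cycle computation then gives $\alpha^\sigma=\alpha$ directly.

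What each approach buys: the paper's is shorter, requiring only a quadratic extension, but the ``without loss of generality'' in the image of the auxiliary frame does real work and relies on knowing the $\Omega_{13}$-stabiliser. Your approach is longer by one extension, but the descent is cleaner: the frame is intrinsic, the pencil is visibly determined by it, and the cocycle calculation $\alpha^\sigma(P_i)=P_i'$ goes through with no side conditions. Your identification of the main obstacle is also accurate: once the external configuration over $\bF_{q^2}$ is established combinatorially (and you do this correctly, noting that in the secant case $\sigma$ would be an involution on the $\bF_{q^2}$-frame and hence a double transposition yielding a real line pair), the $4$-cycle structure on $B$ over $\bF_{q^4}$ and the compatibility of frame orderings follow formally.
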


\begin{proof}
Let $S$ be a solid with hyperplane-orbit distribution $[0,0,1,q]$, and let $\cL_1\cL_2$ be the unique pair of imaginary lines in the pencil $\cP(S)$. 
To prove the result, we consider the extension of $\cP(S)$ to $\PG(2,q^2)$. 
Since $\cL_1$ and $\cL_2$ are conjugate with respect to the Frobenius collineation $\sigma$ induced by the automorphism $a \mapsto a^q$ of $\bF_{q^2}$, let us relabel them as $\ell$ and $\ell^\sigma$. 
Choose a nonsingular conic $\cC$ in $\cP(S)$, and denote the extensions of $\cP(S)$, $\cC$, $\ell$ and $\ell^\sigma$ to $\PG(2,q^2)$ using a `bar' (as in previous such arguments). 
Recall from the discussion preceding Lemma~\ref{14dists} that $\ell$ and $\ell^\sigma$ are external to $\cC$, since $\cP(S)$ necessarily has empty base. 
We claim that $\overline{\ell}$ and $\overline{\ell^\sigma}$ are likewise external to $\overline{\cC}$. 
If $\overline{\ell}$ is a tangent to $\overline{\cC}$, meeting $\overline{\cC}$ in a point $P$, then $\overline{\ell^\sigma}$ is the tangent to $\overline{\cC}$ at the point $P^\sigma$. 
By the classification in Section~\ref{ss4.1}, specifically Remark~\ref{rem:Omega6}, the pencil $\overline{\cP(S)}$ then has type $\Omega_6$ (over $\bF_{q^2}$). 
In particular, $\{P,P^\sigma\}$ is the base of $\overline{\cP(S)}$, and the line $\langle P,P^\sigma \rangle$ is its unique double line. 
However, this line is fixed by $\sigma$, so we have a contradiction. 
If $\overline{\ell}$ is a secant to $\overline{\cC}$ then it meets $\overline{\cC}$ in a pair of conjugate points $\{P,P^\sigma\}$, and $\overline{\ell^\sigma}$ is also a secant, meeting $\overline{\cC}$ in another pair of conjugate points $\{Q,Q^\sigma\}$. 
These four points are distinct because the point of intersection of $\ell$ and $\ell^\sigma$ does not belong to $\cC$, so it follows from Section~\ref{kk=22} that $\overline{\cP(S)}$ has type $\Omega_9$. 
However, the conic comprising the pair of lines $\langle P,Q \rangle$ and $\langle P^\sigma,Q^\sigma \rangle$ then belongs to $\overline{\cP(S)}$, a contradiction since this line pair is fixed by $\sigma$. 
Hence, $\ell$ and $\ell^\sigma$ external to $\cC$ as claimed. 
Section~\ref{subsec:(0,0)} therefore implies that $\overline{\cP(S)}$ has type $\Omega_{13}$. 
Now suppose that $S'$ is a second solid with hyperplane-orbit distribution $[0,0,1,q]$, and let $m$, $m^\sigma$ be the unique imaginary line pair in $\cP(S')$. 
Since $\overline{\cP(S')}$ also has type $\Omega_{13}$, there exists a projectivity $\alpha \in \PGL(3,q^2)$ mapping $S$ to $S'$. 
Choose two points $R_1$ and $R_2$ on $\overline{\ell}$ that do not belong to $\overline{\ell^\sigma}$. 
Then $\Lambda = (R_1,R_2,R_1^\sigma,R_2^\sigma)$ is a frame of $\PG(2,q^2)$, mapped by $\alpha$ to a frame $(W_1,W_2,W_1^\sigma,W_2^\sigma)$, where without loss of generality the points $W_1$ and $W_2$ are on $\overline{m} \setminus \overline{m^\sigma}$. 
The projectivity $\alpha \sigma \alpha^{-1} \sigma$ fixes $\Lambda$ pointwise, and so is equal to the identity element of $\PGL(3,q^2)$. 
Hence, $\alpha$ commutes with $\sigma$, and therefore belongs to $\PGL(3,q)$. 
In other words, there exists an element of $\PGL(3,q)$ mapping $\cP(S)$ to $\cP(S')$, and so the solids $S$ and $S'$ belong to the same $K$-orbit.
\end{proof}

\begin{lemma}
If $S_{14} \in \Omega_{14}$ then $K_{S_{14}} \cong C_4$.
\end{lemma}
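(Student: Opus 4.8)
The plan is to compute $K_{S_{14}}$ by Galois descent, passing to $\PG(2,q^4)$ where the pencil acquires a frame of base points. Recall from the preceding lemma that the extension of $\cP(S_{14})$ to $\PG(2,q^2)$ has type $\Omega_{13}$, and from the proof of Lemma~\ref{13dists} that a pencil of type $\Omega_{13}$ over a field $F$ extends over the quadratic extension of $F$ to a pencil of type $\Omega_9$ whose base is a frame. Applying this with $F=\bF_{q^2}$, over $\bF_{q^4}$ the pencil $\cP(S_{14})$ becomes a pencil of type $\Omega_9$ with base a frame $B=\{P_1,P_2,P_3,P_4\}$ of $\PG(2,q^4)$; by Lemma~\ref{89stabs} its stabiliser $\widetilde G \leqslant \PGL(3,q^4)$ is isomorphic to $\operatorname{Sym}_4$ and acts faithfully on $B$.

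Next I would identify $K_{S_{14}}$ inside $\widetilde G$. Let $\sigma$ be the Frobenius collineation of $\PG(2,q^4)$ induced by $a\mapsto a^q$, so that $\langle\sigma\rangle\cong\operatorname{Gal}(\bF_{q^4}/\bF_q)\cong C_4$ and $B$ is $\sigma$-invariant (as $\cP(S_{14})$ is defined over $\bF_q$). Since a projectivity of $\PG(2,q^4)$ is uniquely determined by its action on the frame $B$, restriction to $B$ gives a faithful permutation representation $\widetilde G\to\operatorname{Sym}(B)\cong\operatorname{Sym}_4$, written $\alpha\mapsto\pi_\alpha$. An element $\alpha\in\widetilde G$ is defined over $\bF_q$ if and only if $\sigma\alpha\sigma^{-1}=\alpha$; since $\sigma\alpha\sigma^{-1}$ induces the permutation $\pi_\sigma\pi_\alpha\pi_\sigma^{-1}$ on $B$, and projectivities agreeing on $B$ coincide, this holds if and only if $\pi_\alpha$ commutes with $\pi_\sigma$. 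Conversely, every $\pi_\alpha$ commuting with $\pi_\sigma$ yields an $\bF_q$-rational $\alpha$. Hence $K_{S_{14}}\cong C_{\operatorname{Sym}_4}(\pi_\sigma)$, where $\pi_\sigma$ is the permutation induced on $B$ by $\sigma$.

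It then remains to determine the cycle type of $\pi_\sigma$. The key point is that $\cP(S_{14})$ has empty base over $\bF_q$, and its extension over $\bF_{q^2}$, being of type $\Omega_{13}$, also has empty base (its point-orbit distribution in Lemma~\ref{13dists} has no rank-$1$ points). Consequently no point of $B$ is $\bF_{q^2}$-rational, so $\sigma^2$, which generates $\operatorname{Gal}(\bF_{q^4}/\bF_{q^2})$, acts without fixed points on $B$; that is, $\pi_{\sigma^2}=\pi_\sigma^2$ is a product of two transpositions. The only elements of $\operatorname{Sym}_4$ whose square is a fixed-point-free involution are the $4$-cycles, so $\pi_\sigma$ is a $4$-cycle, and since the centraliser in $\operatorname{Sym}_4$ of a $4$-cycle is the cyclic group it generates, we conclude $K_{S_{14}}\cong C_4$.

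The main obstacle is the Galois-descent bookkeeping over the tower $\bF_q\subset\bF_{q^2}\subset\bF_{q^4}$: one must verify that the base frame genuinely first appears over $\bF_{q^4}$, so that $\sigma$ acts as a $4$-cycle rather than with smaller order, and must justify the identification of $\bF_q$-rational stabiliser elements with permutations centralising $\pi_\sigma$. (One could instead work only over $\bF_{q^2}$, using that the $\Omega_{13}$-stabiliser is $C_2^2:C_2$ and computing the $\sigma$-fixed subgroup, but pinning down the relevant order-$4$ subgroup there is less transparent than the $4$-cycle centraliser computation above.) Once the setup is correct, the conclusion reduces to the elementary fact that a $4$-cycle is self-centralising up to its own powers.
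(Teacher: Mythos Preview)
Your proof is correct and takes a genuinely different route from the paper's. The paper works only over $\bF_{q^2}$, where $\overline{\cP(S_{14})}$ has type $\Omega_{13}$ with stabiliser $H\cong C_2^2:C_2$ of order $8$ (realised as a permutation group on the four $\bF_{q^4}$-rational base points $P,Q,P^\sigma,Q^\sigma$); it then observes that none of the three generators $(P,Q)$, $(P^\sigma,Q^\sigma)$, $(P,P^\sigma)(Q,Q^\sigma)$ can be induced by an element of $\PGL(3,q)$ (since such an element commutes with $\sigma$), giving $|K_{S_{14}}|\leqslant 4$, and finishes by exhibiting an explicit matrix of order $4$ fixing $S_{14}$. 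You instead ascend one step further to $\bF_{q^4}$, where the pencil has type $\Omega_9$ and stabiliser $\operatorname{Sym}_4$, and identify $K_{S_{14}}$ in one stroke with the centraliser in $\operatorname{Sym}_4$ of the permutation $\pi_\sigma$ induced by Frobenius on the base frame; the emptiness of the base over $\bF_{q^2}$ forces $\pi_\sigma^2$ to be a fixed-point-free involution, so $\pi_\sigma$ is a $4$-cycle and its centraliser is $\langle\pi_\sigma\rangle\cong C_4$. Your argument is more conceptual and avoids any explicit matrix computation, at the cost of the extra quadratic extension and the (standard) identification $\PGL(3,q)=C_{\PGL(3,q^4)}(\sigma)$; the paper's approach is more hands-on but has the advantage of producing an explicit generator of $K_{S_{14}}$.
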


\begin{proof}
Let $\ell$ and $\ell^\sigma$ be the unique pair of imaginary lines in $\cP(S_{14})$, where $\sigma$ is the Frobenius collineation of $\PG(2,q^2)$ induced by the automorphism $a \mapsto a^q$ of $\bF_{q^2}$. 
As explained above, the extension $\overline{\cP(S_{14})}$ of the pencil $\cP(S_{14})$ to $\PG(2,q^2)$ has type $\Omega_{13}$. 
The base $B$ of $\overline{\cP(S_{14})}$ comprises two distinct points $P$ and $Q$ on the line $\overline{\ell}$ and their conjugates $P^\sigma$ and $Q^\sigma$ on $\overline{\ell^\sigma}$.
By the proof of Lemma~\ref{13dists}, the stabiliser of $\overline{\cP(S_{14})}$ in $\PGL(3,q^2)$ is isomorphic to the permutation group $H = \langle (P,Q), (P^\sigma,Q^\sigma), (P,P^\sigma)(Q,Q^\sigma) \rangle \leqslant \operatorname{Sym}(B)$, which has order $8$. 
Now, observe that the projectivity inducing the permutation $(P,Q)$ does not belong to $\PGL(3,q)$, because if an element of $\PGL(3,q)$ swaps $P$ and $Q$ then it must also swap $P^\sigma$ and $Q^\sigma$. 
(Indeed, none of the given generators of $H$ are realised over $\bF_q$.) 
Therefore, the stabiliser of $\cP(S_{14})$ in $\PGL(3,q)$ has order at most $4$. 
Conversely, if we take $S_{14}$ to be the solid defined in \eqref{14rep} then a calculation shows that $S_{14}$ is fixed by the subgroup of $K$ generated by the element of order $4$ represented by the matrix
\[
\left[
\begin{matrix}
1 & 0 & 0 \\
1 & 1 & 0 \\
0 & \gamma^{-1} & 1
\end{matrix}
\right]
\]
We therefore conclude that $K_{S_{14}} \cong C_4$, as claimed. 
\end{proof}

\section{Solids contained in $q+1$ hyperplanes of type $\mathcal{H}_3$}\label{section3}

It remains to consider the possibility that a solid $S$ of $\PG(5,q)$ is contained in $q+1$ hyperplanes of type $\cH_3$, or, equivalently, that the associated pencil of conics $\cP(S)$ contains $q+1$ nonsingular conics. 
We first establish the existence of such solids. 
Choose $b,c \in \bF_q$ such that the cubic $b\lambda^3+c\lambda+1$ has no roots over $\bF_q$. 
(For example, take the minimal polynomial of a primitive element $\alpha$ of the field extension $\bF_{q^3} / \bF_q$, scale it to make the constant term $1$, and then apply a coordinate transformation to eliminate the $\lambda^2$ term.) 
Lemma~\ref{pencils} shows that the pencil generated by $\cZ(X_0X_1 + X_2^2)$ and $\cZ(X_0X_2 + bX_1^2 + cX_2^2)$ contains $q+1$ nonsingular conics, and so we obtain the desired orbit of solids with hyperplane-orbit distribution $[0,0,0,q+1]$,
\begin{equation} \label{15rep}
\Omega_{15} : \begin{bmatrix}x&y&bz+cy\\y&z&t\\bz+cy&t&y\end{bmatrix}, 
\quad \text{where} \quad b\lambda^3+c\lambda+1 \text{ is irreducible over } \bF_q.
\end{equation}
By Lemma~\ref{prop}, a pencil of conics corresponding to a solid in this orbit has a unique base point. 

\begin{lemma}
A solid of type $\Omega_{15}$ has point-orbit distribution $[1,1,q^2-1,q^3+q]$ and hyperplane-orbit distribution $[0,0,0,q+1]$.
\end{lemma}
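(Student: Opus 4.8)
The plan is to read the hyperplane-orbit distribution directly off the construction of $\Omega_{15}$, and to obtain the point-orbit distribution from a direct count of the cubic surface $\Psi(S_{15})$, stratifying its points by rank only at the end.

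First I would settle the easy parts. By the discussion preceding the lemma, Lemma~\ref{pencils} guarantees that all $q+1$ conics of $\cP(S_{15})$ are nonsingular, and since $a_1+a_{2r}+a_{2i}+a_3=q+1$ this forces $\operatorname{OD}_{K,4}(S_{15})=[0,0,0,q+1]$. Substituting $a_{2r}=a_{2i}=0$ into Lemma~\ref{prop}(ii) gives $b=1$, so $S_{15}$ meets $\cV(\bF_q)$ in exactly one point and $r_1=1$. For the rank-$2$ point on the nucleus plane, I would note that the diagonal entries of the representative in \eqref{15rep} are $x$, $z$ and $y$; intersecting with $\pi_n$ therefore forces $x=y=z=0$, leaving the single point $(x,y,z,t)=(0,0,0,1)$, which has rank $2$. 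Thus $r_{2n}=1$.

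The crux is the count of $\Psi(S_{15})$, namely the zeros in $S_{15}$ of the determinant of the representing matrix. A short characteristic-$2$ computation gives $\det = xyz + xt^2 + y^3 + b^2z^3 + c^2y^2z$, which I would rewrite as $x(yz+t^2)=y^3+c^2y^2z+b^2z^3$. Counting affine solutions $(x,y,z,t)\in\bF_q^4$: for each of the $q^3-q^2$ triples $(y,z,t)$ with $yz+t^2\neq 0$ the coordinate $x$ is uniquely determined (since $t^2=yz$ has a unique root $t$ for each $(y,z)$, there are exactly $q^2$ triples with $yz+t^2=0$); and for a triple with $yz+t^2=0$ a solution exists, with $x$ free, exactly when the binary cubic $B=y^3+c^2y^2z+b^2z^3$ vanishes.

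The hard part is to show that $B$ vanishes only at $(y,z)=(0,0)$, and this is where the irreducibility hypothesis enters. Since $b\lambda^3+c\lambda+1$ is irreducible over $\bF_q$ (so in particular $b\neq 0$ and the constant term is nonzero), its reversal $\lambda^3+c\lambda^2+b$ is also irreducible, and applying the Frobenius automorphism $a\mapsto a^2$ to the coefficients yields the irreducible cubic $\mu^3+c^2\mu^2+b^2$; concretely, $\nu^3+c\nu^2+b$ squares to $\nu^6+c^2\nu^4+b^2$, so the two cubics share their root behaviour under $\mu=\nu^2$, and neither has a root in $\bF_q$. As $B$ dehomogenizes (for $z\neq 0$) to $\mu^3+c^2\mu^2+b^2$ in $\mu=y/z$, it has no nontrivial zero, so the locus $yz+t^2=0$ contributes only the $q$ quadruples $(x,0,0,0)$. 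Altogether the determinant has $q^3-q^2+q$ affine zeros, hence $(q^3-q^2+q-1)/(q-1)=q^2+1$ projective ones, so $|\Psi(S_{15})|=q^2+1$. Finally I would read off $r_{2s}=(q^2+1)-r_1-r_{2n}=q^2-1$ and $r_3=(q^3+q^2+q+1)-(q^2+1)=q^3+q$, giving the distribution $[1,1,q^2-1,q^3+q]$. I expect the irreducibility bookkeeping for $B$ to be the only genuine obstacle; the rest is routine counting and the final rank stratification.
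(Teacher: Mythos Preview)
Your proposal is correct and follows essentially the same direct-count approach as the paper: both read off the hyperplane-orbit distribution from the construction and then count $|\Psi(S_{15})|=q^2+1$ to obtain the point-orbit distribution. Your stratification by whether $yz+t^2$ vanishes differs only cosmetically from the paper's stratification by the plane $\cZ(Z)$, and you make explicit the irreducibility argument (that $y^3+c^2y^2z+b^2z^3$ has no nontrivial zero) which the paper leaves implicit.
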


\begin{proof}
Let $S_{15}$ be the solid defined in \eqref{15rep}, for some fixed $b,c \in \bF_q$ such that $b\lambda^3 + c\lambda + 1$ is irreducible over $\bF_q$.  
It remains to calculate the point-orbit distribution of $S_{15}$. 
The cubic surface $\Psi(S_{15})$ intersects the plane $\cZ(Z)$ in a rational cubic curve consisting of $q+1$ points, and contains a further $q^2-q$ points, parameterised as $(f(y,t),y,1,t)$ with $f(y,t)=(b+cy^2+y^3)/(t^2+y)$ and $t^2\neq y$. 
It meets $\cV(\bF_q)$ in a unique point, and the nucleus plane in a unique point. 
Hence, the point-orbit distribution of $S_{15}$ is $[1,1,q^2-1,q^3+q]$.
\end{proof}

We now show that {\em every} solid with hyperplane-orbit distribution $[0,0,0,q+1]$ belongs to the $K$-orbit $\Omega_{15}$. 
We need to know the sizes of the following unions of $K$-orbits, which are calculated via the orbit--stabiliser theorem using the relevant stabilisers (from Table~\ref{invariantsTable}) and the fact that $|K| = |\PGL(3,q)| = q^3(q^3-1)(q^2-1)$:
\[
|\Omega_6\cup \Omega_7|=q^4(q^2+q+1), \; 
|\Omega_8\cup \Omega_{10}|=q^3(q^3-1)(q+1), \; 
|\Omega_9\cup \Omega_{13}|=\tfrac{1}{6}q^3(q^3-1)(q^2-1).
\]
Note also that $|\cH_1| = q^2+q+1$, $|\cH_{2r}| = \tfrac{1}{2}q(q+1)(q^2+q+1)$, $|\cH_{2i}| = \tfrac{1}{2}q(q-1)(q^2+q+1)$ and $|\cH_3| = q^5-q^2$. 
Write $\cH_2=\cH_{2r}\cup \cH_{2i}$ and note that $|\cH_2|=q^2(q^2+q+1)$.

\begin{lemma}\label{lem:q^2}
A hyperplane belonging to the $K$-orbit $\cH_3$ contains exactly $q^2$ solids that are contained in a hyperplane of type $\cH_1$ and in a hyperplane of type $\cH_2$.
\end{lemma}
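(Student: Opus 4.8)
The plan is to recast the counting condition as a statement about the orbits $\Omega_1,\dots,\Omega_{15}$ and then finish with a single double count. Fix a hyperplane $H$ of type $\cH_3$. A solid $S\subseteq H$ has $a_3\geqslant 1$; the extra hypotheses are that $S$ is contained in a hyperplane of type $\cH_1$ (so $a_1\geqslant 1$) and in a hyperplane of type $\cH_2=\cH_{2r}\cup\cH_{2i}$ (so $a_{2r}+a_{2i}\geqslant 1$). First I would inspect the hyperplane-orbit distributions $[a_1,a_{2r},a_{2i},a_3]$ recorded in Table~\ref{invariantsTable} and observe that the orbits with $a_3\geqslant 1$, $a_1\geqslant 1$ and $a_{2r}+a_{2i}\geqslant 1$ simultaneously are exactly $\Omega_6$ (with distribution $[1,1,0,q-1]$) and $\Omega_7$ (with distribution $[1,0,1,q-1]$); every other orbit fails at least one of the three conditions. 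Hence the solids to be counted are precisely the members of $\Omega_6\cup\Omega_7$ that lie in $H$.

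Next I would exploit the fact that $\cH_3$ is a single $K$-orbit, so $K$ acts transitively on it. Since both the property ``$S\in\Omega_6\cup\Omega_7$'' and the incidence relation ``$S\subseteq H$'' are $K$-invariant, the number $N$ of solids of $\Omega_6\cup\Omega_7$ contained in $H$ is the same for every $H\in\cH_3$; this is what makes the count well defined and allows a uniform double count. I would then count incident pairs $(S,H)$ with $S\in\Omega_6\cup\Omega_7$, $H\in\cH_3$ and $S\subseteq H$ in two ways. Counting from the solid side, each such $S$ is contained in exactly $a_3=q-1$ hyperplanes of type $\cH_3$ (the value of $a_3$ is the same, $q-1$, for both $\Omega_6$ and $\Omega_7$), giving $|\Omega_6\cup\Omega_7|\,(q-1)$ pairs. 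Counting from the hyperplane side gives $|\cH_3|\,N$ pairs, so $|\Omega_6\cup\Omega_7|\,(q-1)=|\cH_3|\,N$.

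Finally I would substitute the sizes already recorded in the excerpt, namely $|\Omega_6\cup\Omega_7|=q^4(q^2+q+1)$ together with $|\cH_3|=q^5-q^2=q^2(q-1)(q^2+q+1)$, to obtain
\[
N=\frac{q^4(q^2+q+1)(q-1)}{q^2(q-1)(q^2+q+1)}=q^2,
\]
as claimed. There is no serious analytic obstacle here: the entire argument is combinatorial, and the only step requiring care is the orbit identification, where one must verify \emph{exhaustively} from Table~\ref{invariantsTable} that $\Omega_6$ and $\Omega_7$ are the sole orbits meeting all three incidence conditions (and that both genuinely do, via their distributions $[1,1,0,q-1]$ and $[1,0,1,q-1]$). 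Once that classification is in hand, the $K$-transitivity on $\cH_3$ guarantees the double count is legitimate, and the arithmetic collapses cleanly to $q^2$.
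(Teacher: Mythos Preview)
Your proof is correct and follows essentially the same approach as the paper: identify the relevant solids as those in $\Omega_6\cup\Omega_7$, use $K$-transitivity on $\cH_3$ to justify that the count is constant across hyperplanes of type $\cH_3$, and then double count incident pairs $(S,H)$ to obtain $|\cH_3|\cdot N=|\Omega_6\cup\Omega_7|\cdot(q-1)$, whence $N=q^2$. The only cosmetic difference is that the paper appeals directly to the classification in Section~\ref{ss4.1} (solids with $a_1\geqslant 1$ and $a_3\geqslant 1$ lie in $\Omega_5\cup\Omega_6\cup\Omega_7$) rather than scanning Table~\ref{invariantsTable}, which is preferable here because the completeness of Table~\ref{invariantsTable} is only established after this lemma is used.
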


\begin{proof} 
Since $\cH_3$ is a $K$-orbit, each of its hyperplanes contains the same number of solids that are contained in a hyperplane of type $\cH_j$ for both $j \in \{1,2\}$. 
Denote this number by $k$.
Let $H\in \cH_3$ and $H_1\in \cH_1$. 
By Section~\ref{ss4.1}, the solid $H\cap H_1$ belongs to one of the $K$-orbits $\Omega_5$, $\Omega_6$ or $\Omega_7$, and accordingly has hyperplane orbit distribution $[1,0,0,q]$,  $[1,1,0,q-1]$ or $[1,0,1,q-1]$ (by Lemma~\ref{567hod}). 
If a solid $H\cap H_2$ with $H_2\in \cH_2$ belongs to a hyperplane of type $\cH_1$, it therefore has type $\Omega_6$ or $\Omega_7$, and each such solid belongs to $q-1$ hyperplanes of type $\cH_3$.
Counting the flags $(H,S)$ where $H\in \cH_3$ and $S$ is a solid contained in a hyperplane of type $\cH_j$ for both $j \in \{1,2\}$ gives $|\cH_3|\cdot k =|\Omega_6\cup \Omega_7|\cdot (q-1)$, 
so $k=q^2$.
\end{proof}

\begin{lemma}\label{lem:card_15}
There are exactly $\frac{1}{3}q^3(q^3-1)(q^2-1)$ solids with hyperplane-orbit distribution $[0,0,0,q+1]$.
\end{lemma}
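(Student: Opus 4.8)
The plan is to count the solids with hyperplane-orbit distribution $[0,0,0,q+1]$ by a double-counting (incidence) argument, using the hyperplanes of type $\cH_3$ as an intermediary. The key observation is that a solid with this distribution lies in $q+1$ hyperplanes of type $\cH_3$ and in no hyperplane of any other type, whereas every other type of solid that lies in some $\cH_3$-hyperplane lies in \emph{fewer} than $q+1$ of them (and in at least one hyperplane of type $\cH_1$ or $\cH_2$). So I would count flags $(H,S)$ with $H \in \cH_3$ and $S \subset H$ a solid, and isolate the contribution of the $[0,0,0,q+1]$-solids by subtracting off the contributions of all the other solid-types that can occur inside an $\cH_3$-hyperplane.

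\emph{First} I would fix a hyperplane $H \in \cH_3$ and determine how many solids it contains; since $H \cong \PG(4,q)$, the number of solids (hyperplanes of $H$) is exactly $q^4+q^3+q^2+q+1 = \frac{q^5-1}{q-1}$. \emph{Next}, I would classify these solids according to their type. A solid $S \subset H$ with $1 \leqslant a_3 \leqslant q$ must be contained in at least one hyperplane of type $\cH_1$ or $\cH_2$ (since $a_1+a_{2r}+a_{2i} = (q+1)-a_3 \geqslant 1$), and conversely a solid contained only in $\cH_3$-hyperplanes has distribution $[0,0,0,q+1]$. Lemma~\ref{lem:q^2} already tells me that $H$ contains exactly $q^2$ solids that lie in both an $\cH_1$- and an $\cH_2$-hyperplane. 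To finish the bookkeeping inside $H$ I would also need the number of solids of $H$ lying in an $\cH_1$- or $\cH_2$-hyperplane but not both; these correspond to the orbits $\Omega_5$ (type $[1,0,0,q]$) and $\Omega_{10},\Omega_{11},\Omega_{12},\Omega_{14}$ (types with exactly one nonzero entry among $a_1,a_{2r},a_{2i}$), whose counts per $\cH_3$-hyperplane can be extracted by the same flag-counting technique applied to each relevant orbit, using the orbit sizes and the numbers $|\cH_1|,|\cH_2|$ recorded just before the lemma.

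\emph{Concretely}, let $N$ denote the number of $[0,0,0,q+1]$-solids and let $H \in \cH_3$. Each such solid lies in exactly $q+1$ hyperplanes of $\cH_3$, so counting flags $(H,S)$ with $S$ of type $[0,0,0,q+1]$ gives $|\cH_3| \cdot (\text{number of such solids per }H) = N \cdot (q+1)$. On the other hand, the number of $[0,0,0,q+1]$-solids inside a fixed $H$ equals the total number of solids in $H$, namely $\frac{q^5-1}{q-1}$, minus the number lying in some $\cH_1$- or $\cH_2$-hyperplane. The latter count I assemble from Lemma~\ref{lem:q^2} together with the per-$H$ counts for $\Omega_5,\Omega_{10},\Omega_{11},\Omega_{12},\Omega_{14}$, each obtained via $|\cH_3|\cdot(\text{count per }H) = |\Omega_i|\cdot a_3(\Omega_i)$ using the orbit sizes in Table~\ref{invariantsTable} and the $a_3$-values from the hyperplane-orbit distributions. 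Solving for $N$ and simplifying should yield $N = \tfrac{1}{3}q^3(q^3-1)(q^2-1)$.

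\emph{The hard part} will be the combinatorial bookkeeping: correctly identifying \emph{all} orbit-types of solids that can sit inside an $\cH_3$-hyperplane and avoiding double-counting when a solid lies in several hyperplanes of types $\cH_1$ or $\cH_2$ simultaneously (which is precisely where inclusion--exclusion, and Lemma~\ref{lem:q^2}'s ``both'' count, enters). I expect the cleanest route is to count directly the solids of $H$ that lie in \emph{no} hyperplane of type $\cH_1$ or $\cH_2$ — rather than subtracting — by first counting, via flag-counting against $\Omega_5,\dots,\Omega_{14}$, those that \emph{do}, so that every relevant orbit is accounted for exactly once and the final arithmetic collapses to the claimed value.
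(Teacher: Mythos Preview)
Your overall strategy matches the paper's exactly: fix $H \in \cH_3$, count the solids in $H$ that lie in no $\cH_1$- or $\cH_2$-hyperplane, then multiply by $|\cH_3|/(q+1)$. However, your explicit list of orbits ``in $\cH_1$ or $\cH_2$ but not both'' is incomplete. You name only $\Omega_5, \Omega_{10}, \Omega_{11}, \Omega_{12}, \Omega_{14}$, but $\Omega_8$ (distribution $[0,2,0,q-1]$), $\Omega_9$ ($[0,3,0,q-2]$) and $\Omega_{13}$ ($[0,1,2,q-2]$) also lie in some $\cH_2$-hyperplane and in no $\cH_1$-hyperplane, so they must be included as well. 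Your descriptive criterion ``exactly one nonzero entry among $a_1, a_{2r}, a_{2i}$'' is in any case inconsistent with your own list, since $\Omega_{10}$ has $a_{2r}=a_{2i}=1$. Proceeding with the truncated list would overcount $n_0$ and give the wrong value of $N$.

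The paper sidesteps this bookkeeping by a neat trick. It first separates the solids in $H$ into those lying in some $\cH_1$-hyperplane --- there are exactly $|\cH_1| = q^2+q+1$ of these, since each such solid lies in a \emph{unique} $\cH_1$-hyperplane --- and those lying in no $\cH_1$-hyperplane, the latter stratified as $n_0, n_1, n_2, n_3$ according to the number $i$ of containing $\cH_2$-hyperplanes. One then observes that $\sum_{i=0}^3 n_i$ (the total of the second kind) and $\sum_{i=1}^3 i\, n_i$ (the flag count against $\cH_2$, namely $|\cH_2|$ minus the $q^2$ from Lemma~\ref{lem:q^2}) are both equal to $q(q^3+1)$. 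Equating them yields $n_0 = n_2 + 2n_3$, so one never needs $n_1$ at all (hence never needs $|\Omega_{11}|$, $|\Omega_{12}|$, $|\Omega_{14}|$), and $n_2, n_3$ come directly from $|\Omega_8 \cup \Omega_{10}|$ and $|\Omega_9 \cup \Omega_{13}|$. Your direct subtraction also works once the orbit list is corrected to all of $\Omega_5,\ldots,\Omega_{14}$, but it requires more orbit sizes and more arithmetic than the paper's route.
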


\begin{proof}
Consider a hyperplane $H$ of type $\cH_3$. 
If a solid contained in $H$ is contained in a hyperplane of type $\cH_1$, then it is contained in exactly one such hyperplane, by the classification in Section~\ref{section2}, so there are $|\cH_1| = q^2+q+1$ such solids in $H$. 
If a solid in $H$ is not contained in a hyperplane of type $\cH_1$, then it is contained in $i$ hyperplanes of type $\cH_2$ for some $i \in \{0,1,2,3\}$, by Lemma~\ref{3singular}. 
Let $n_i$ denote the number of solids contained in $H$ in each case. 
The total number of solids in $\PG(5,q)$ with hyperplane-orbit distribution $[0,0,0,q+1]$ is then equal to
\begin{equation} \label{15answer}
\frac{|\cH_3|\cdot n_0}{q+1},
\end{equation}
so we must calculate $n_0$. 
The total number of solids in $H$ is $(q^5-1)/(q-1)$, so $\sum_{i=0}^3 n_i = N - |\cH_1| = q(q^3+1)$. 
Now count the flags $(S,H')$ where $S$ is a solid in $H$ that is not contained in a hyperplane of type $\cH_1$ and $H'$ is a hyperplane of type $\cH_2$. 
By Lemma~\ref{lem:q^2}, we obtain $\sum_{i=1}^3 i\cdot n_i = |\cH_2|-q^2 = q(q^3+1)$. 
In particular, we have $\sum_{i=0}^3 n_i = \sum_{i=1}^3 i\cdot n_i$ and so $n_0 = n_2 + 2n_3$. 
Now, a solid contributing to $n_2$ belongs to $\Omega_8\cup \Omega_{10}$, so $n_2=(q-1)|\Omega_8\cup \Omega_{10}|/|\cH_3|=q(q^2-1)$. 
Similarly, a solid contributing to $n_3$ belongs to $\Omega_9\cup \Omega_{13}$, giving $n_3=(q-2)|\Omega_9\cup \Omega_{13}|/|\cH_3|=\tfrac{1}{6}q(q^2-1)(q-2)$. 
Therefore, $n_0 = n_2 + 2n_3 = \tfrac{1}{3}q(q+1)(q^2-1)$. 
Putting this into the expression in \eqref{15answer} completes the proof. 
\end{proof}

\begin{lemma}\label{thm:stab_15}
If $S_{15} \in \Omega_{15}$ then $K_{S_{15}} \cong C_3$.
\end{lemma}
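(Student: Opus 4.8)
The goal is to determine the stabiliser $K_{S_{15}}$ of the representative solid $S_{15}$ given in \eqref{15rep}, and to show it is cyclic of order $3$. My strategy is to combine a counting argument (which pins down the \emph{order} of the stabiliser) with an explicit construction (which exhibits an element realising that order and identifies the group structure).

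First I would compute the order of $K_{S_{15}}$ via the orbit--stabiliser theorem. By Lemma~\ref{lem:card_15}, the solids with hyperplane-orbit distribution $[0,0,0,q+1]$ number exactly $\tfrac{1}{3}q^3(q^3-1)(q^2-1)$. Since every such solid lies in the $K$-orbit $\Omega_{15}$ (as established in the preceding lemmas, which show both the existence of $\Omega_{15}$ and that all solids with this hyperplane-orbit distribution form a single $K$-orbit), this count equals $|\Omega_{15}|$. Using $|K| = q^3(q^3-1)(q^2-1)$, the orbit--stabiliser theorem immediately gives
\[
|K_{S_{15}}| = \frac{|K|}{|\Omega_{15}|} = \frac{q^3(q^3-1)(q^2-1)}{\tfrac{1}{3}q^3(q^3-1)(q^2-1)} = 3.
\]
A group of order $3$ is necessarily cyclic, so $K_{S_{15}} \cong C_3$ will follow at once, \emph{provided} I can independently confirm that all solids with this hyperplane-orbit distribution really do lie in a single orbit (i.e. that $\Omega_{15}$ is not merely one of several such orbits); this single-orbit fact is exactly what the argument preceding this lemma is meant to supply.

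To make the proof self-contained and to exhibit the group concretely, I would then produce an explicit order-$3$ element of $K$ fixing $S_{15}$. The natural candidate is the projectivity induced by a matrix $A \in \GL(3,q)$ of order $3$ arising from the cyclic structure of the field extension $\bF_{q^3}/\bF_q$: since $b\lambda^3 + c\lambda + 1$ is irreducible over $\bF_q$, the companion-type matrix associated with multiplication by a root (suitably conjugated) has order dividing $q^3-1$ but acts on the pencil $\cP(S_{15})$ with period $3$, cyclically permuting the three singular-free structure. Concretely, one checks by direct computation that the induced map $M_P \mapsto A M_P A^T$ fixes the span $S_{15}$ setwise. Verifying $A S_{15} A^T = S_{15}$ is a finite linear-algebra check on the four generating matrices of \eqref{15rep}.

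\textbf{The main obstacle.} The counting route is clean once Lemma~\ref{lem:card_15} and the single-orbit statement are in hand, so the genuine work lies in the explicit verification: writing down the correct order-$3$ matrix $A$ and confirming $A M A^T \in S_{15}$ for each generator $M$. The subtlety is that $A$ itself need not have order $3$ in $\GL(3,q)$ — it is its image in the action on the solid, modulo the stabiliser's kernel, that has order $3$ — so I must be careful to produce a matrix whose induced projectivity on $\PG(5,q)$ genuinely has order $3$ and to rule out that a smaller power already fixes $S_{15}$ trivially. I expect the counting argument to carry the logical weight, with the explicit matrix serving to confirm the isomorphism type $C_3$ rather than, say, a group order $3$ that the count alone cannot distinguish from being cyclic (though for order $3$ this distinction is vacuous).
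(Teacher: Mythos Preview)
Your counting argument is circular in the paper's logical structure. You assume that ``every such solid lies in the $K$-orbit $\Omega_{15}$ (as established in the preceding lemmas)'', but the preceding lemmas do \emph{not} establish this: the paper deduces the single-orbit claim only \emph{after} the present lemma, by combining Lemma~\ref{lem:card_15} with the stabiliser you are trying to compute. The sentence immediately following Lemma~\ref{thm:stab_15} in the paper reads: ``Lemmas~\ref{lem:card_15} and~\ref{thm:stab_15} together imply that there is a unique $K$-orbit of solids with hyperplane-orbit distribution $[0,0,0,q+1]$.'' So the single-orbit fact is an output of this lemma, not an input, and your orbit--stabiliser computation cannot be run.

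The paper's actual argument bounds $|K_{S_{15}}|$ directly by passing to the \emph{cubic} extension $\PG(2,q^3)$. Over $\bF_{q^3}$ the pencil acquires exactly three singular conics (cf.\ Lemma~\ref{3singular}), necessarily conjugate under the Frobenius collineation $\sigma$, and a check against Table~\ref{invariantsTable} shows the extended pencil has type $\Omega_9$. Its base is a frame consisting of the unique $\bF_q$-rational base point $Q$ together with a Frobenius orbit $\{P,P^\sigma,P^{\sigma^2}\}$. Hence $K_{S_{15}}$ embeds in the stabiliser of $Q$ inside $\operatorname{Sym}_4$, i.e.\ in $\operatorname{Sym}_3$; a Frobenius-commutation argument then rules out transpositions, giving $|K_{S_{15}}| \leqslant 3$, while the $3$-cycle on $\{P,P^\sigma,P^{\sigma^2}\}$ commutes with $\sigma$ and so descends to $\PGL(3,q)$, giving equality.

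Your explicit-matrix idea could furnish the lower bound $|K_{S_{15}}| \geqslant 3$ (and indeed the paper records such matrices in the Remark following the lemma), but on its own this yields only $|\Omega_{15}| \leqslant \tfrac{1}{3}|K|$, which together with Lemma~\ref{lem:card_15} does not exclude the possibility that $K_{S_{15}}$ is strictly larger and that further orbits account for the rest of the count. You still need an upper bound on $|K_{S_{15}}|$ that does not presuppose the single-orbit claim; the cubic-extension argument is precisely what provides it.
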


\begin{proof}
To prove this, consider the {\em cubic} extension $\overline{\cP(S_{15})}$ of the pencil $\cP(S_{15})$, namely its extension to $\PG(2,q^3)$. 
Since $\cP(S_{15})$ contains no singular conics, $\overline{\cP(S_{15})}$ contains exactly three singular conics (cf. Lemma~\ref{3singular}), which must be conjugate under the Frobenius collineation $\sigma$ of $\PG(2,q^3)$ induced by the automorphism $a \mapsto a^q$ of $\bF_{q^3}$. 
In particular, these conics must all correspond to hyperplanes of $\PG(5,q^3)$ of the same type. 
According to the hyperplane-orbit distributions in Table~\ref{invariantsTable}, the only possibility is that $S_{15}$ has type $\Omega_9$ over $\bF_{q^3}$. 
Hence, by Lemma~\ref{89stabs}, the stabiliser $\overline{G} \leqslant \PGL(3,q^3)$ of $\overline{\cP(S_{15})}$ is isomorphic to the full permutation group of the four base points of $\overline{\cP(S_{15})}$. 
Only one of these base points, call it $Q$, is $\bF_q$-rational, since $\cP(S_{15})$ has a unique base point; the other three are conjugate under $\sigma$, so we may label them as $P$, $P^\sigma$, $P^{\sigma^2}$. 
The stabiliser $G \leqslant \PGL(3,q)$ of $\cP(S_{15})$ is therefore a subgroup of $\overline{G}_Q \cong \operatorname{Sym}_3$. 
We claim that $G$ induces a group of order $3$ on $\{ P, P^\sigma, P^{\sigma^2} \}$. 
If $\alpha \in G$ fixes one of these points, say $P$, but is not the identity, then it swaps $P^\sigma$ and $P^{\sigma^2}$ (and fixes $Q$), so $P^{\alpha\sigma} = P^\sigma$ and $P^{\sigma\alpha} = P^{\sigma^2}$, contradicting the fact that $\alpha$ commutes with $\sigma$. 
Therefore, $\alpha$ is the identity, and so $G$ induces no transpositions on $\{ P, P^\sigma, P^{\sigma^2} \}$. 
Conversely, consider the element $\beta \in \PGL(3,q^3)$ in the stabiliser of $\overline{\cP(S_{15})}$ corresponding to the $3$-cycle $(P, P^\sigma, P^{\sigma^2})$. 
Then $\beta$ commutes with $\sigma$ and so belongs to $G \leqslant \PGL(3,q)$. 
Hence, $G$ has order $3$.
\end{proof}

\begin{remark}
\textnormal{For reference, we also record a matrix representative $g \in \GL(3,q)$ for a generator of $K_{S_{15}}$, where $S_{15}$ is the solid given in \eqref{15rep}. 
If $q=2^n$ with $n$ even then we may choose $c=0$ and $b$ a non-cube. 
In this case, $g = \operatorname{diag}(1,\zeta,\zeta^2)$ where $\zeta \in \bF_q$ is a primitive third root of unity. 
If $n$ is odd then all elements of $\bF_q$ are cubes, so $c \neq 0$ and we can instead take $c=b$ after a change of variable $\lambda \rightarrow \sqrt{cb^{-1}} \lambda$. 
In this case, 
\[
g = \begin{bmatrix}1&0&0\\0&\zeta&b\\0&b&\zeta^2+b^2\end{bmatrix}, 
\quad \text{where} \quad
\zeta = b^{2^2} + b^{2^4} + \dots + b^{2^{n-1}}.
\]
}
\end{remark}

Lemmas \ref{lem:card_15} and \ref{thm:stab_15} together imply that there is a unique $K$-orbit of solids with hyperplane-orbit distribution $[0,0,0,q+1]$, as claimed (by the orbit--stabiliser theorem, since $|K|=q^3(q^3-1)(q^2-1)$).

\section{Solids in $\PG(5,2)$}\label{F2}

Tables~\ref{mainTableNew} and \ref{invariantsTable} are also correct for $q=2$, but some of the arguments in Sections~\ref{section1}--\ref{section3} do not apply in this case. 
For instance, the orbit $\Omega_1$ can no longer be obtained by considering two pairs of real lines meeting in a point, because a pencil of conics $\cP(S_1)$ corresponding to a solid $S_1 \in \Omega_1$ has a unique real line pair over $\bF_2$.  
Similarly, if $S_9 \in \Omega_9$ then $\cP(S_9)$ no longer contains any nonsingular conics, so the construction preceding Lemma~\ref{89hod} is not valid (but the generators given in Table~\ref{mainTableNew} are). 
Moreover, the point- and hyperplane-orbit distributions of $S_9$ now coincide with those of a solid $S_4 \in \Omega_4$, but the orbits of these solids can be distinguished either by their stabilisers, or by their line-orbit distributions: $S_4$ contains three lines of type $o_6$, while $S_9$ contains none (cf. Remark~\ref{o6Remark}).
In any case, it is straightforward to check the correctness of Tables~\ref{mainTableNew} and \ref{invariantsTable} for $q=2$ either by hand or via the FinInG package in GAP \cite{fining,GAP}. 
(Note that the descriptions of the stabilisers in Table~\ref{invariantsTable} simplify in the obvious ways when $q=2$, i.e. $C_{q-1}$ is the trivial group, $E_q \cong C_2$, and $\GL(2,q) \cong D_{2(q+1)} \cong \operatorname{Sym}_3$. 
Similarly, we necessarily have $\gamma=b=c=1$ in Table~\ref{mainTableNew}.)

\section{Comparison with Campbell's partial classification}\label{comparison1}

Campbell~\cite{campbell} provided a list of 17 ``classes'' and ``sets of classes'' of projectively equivalent pencils of conics in $\PG(2,q)$, $q$ even. 
His analysis divided the classes of pencils into the following sets: pencils with at least one double line (set~\RN{1}); pencils with no double lines and at least one real pair of lines (set~\RN{2}); pencils with no double lines, no real pairs of lines, and at least one conjugate imaginary pair of lines (set~\RN{3}); and pencils with no degenerate (singular) conics (set~\RN{4}).
The correspondence between our classification and Campbell's work is summarised in Table~\ref{comparison}. 
We remark that in the study of his set~\RN{3}, Campbell claimed that a pencil belonging to ``set~15'' has three imaginary pairs of lines and $q-2$ nonsingular conics. 
The non-existence of such a pencil was proved by Saniga~\cite{saniga} (and also follows from Table~\ref{invariantsTable}). 
Moreover, the existence of the $K$-orbit $\Omega_{14}$, whose elements have hyperplane-orbit distribution $[0,0,1,q]$, disproves Campbell's claim \cite[p.~405]{campbell} that there exists no pencil with a unique pair of imaginary conjugate lines and $q$ nonsingular conics.

\begin{table}[!t]
\center
\small
\begin{tabular}{ll}
\toprule
Class/Set & Orbit(s) \\
\midrule
Class~1 & $\Omega_3$ \\
Class~2 & $\Omega_5$ \\
Class~3 & $\Omega_1$ \\
Class~4 & $\Omega_2$ \\
Class~5 & $\Omega_7$ \\
Class~6 & $\Omega_6$ \\
%Class~7 & $\Omega_9$ \\
%Class~8 & $\Omega_{12}$ \\
%Class~9 & $\Omega_8$ \\
%Set~10 & $\Omega_9$, $\Omega_{12}$, $\Omega_{13}$ \\
%Class~11 & $\Omega_{11}$ \\
%Class~12 & $\Omega_4$ \\
%Class~13 & $\Omega_{10}$ \\
%Set~14 & $\Omega_{14}$ \\
%Set~15 & $\Omega_{13}$ \\
%Set~16 & $\Omega_{15}$ \\
%Set~17 & $\Omega_{15}$ \\
\bottomrule
\end{tabular}
\begin{tabular}{ll}
\toprule
Class/Set & Orbit(s) \\
\midrule
%Class~1 & $\Omega_3$ \\
%Class~2 & $\Omega_5$ \\
%Class~3 & $\Omega_1$ \\
%Class~4 & $\Omega_2$ \\
%Class~5 & $\Omega_7$ \\
%Class~6 & $\Omega_6$ \\
Class~7 & $\Omega_9$ \\
Class~8 & $\Omega_{12}$ \\
Class~9 & $\Omega_8$ \\
Set~10 & $\Omega_9$, $\Omega_{12}$, $\Omega_{13}$ \\
Class~11 & $\Omega_{11}$ \\
Class~12 & $\Omega_4$ \\
%Class~13 & $\Omega_{10}$ \\
%Set~14 & $\Omega_{14}$ \\
%Set~15 & $\Omega_{13}$ \\
%Set~16 & $\Omega_{15}$ \\
%Set~17 & $\Omega_{15}$ \\
\bottomrule
\end{tabular}
\begin{tabular}{ll}
\toprule
Class/Set & Orbit(s) \\
\midrule
%Class~1 & $\Omega_3$ \\
%Class~2 & $\Omega_5$ \\
%Class~3 & $\Omega_1$ \\
%Class~4 & $\Omega_2$ \\
%Class~5 & $\Omega_7$ \\
%Class~6 & $\Omega_6$ \\
%Class~7 & $\Omega_9$ \\
%Class~8 & $\Omega_{12}$ \\
%Class~9 & $\Omega_8$ \\
%Set~10 & $\Omega_9$, $\Omega_{12}$, $\Omega_{13}$ \\
%Class~11 & $\Omega_{11}$ \\
%Class~12 & $\Omega_4$ \\
Class~13 & $\Omega_{10}$ \\
Set~14 & $\Omega_{14}$ \\
Set~15 & $\Omega_{13}$ \\
Set~16 & $\Omega_{15}$ \\
Set~17 & $\Omega_{15}$ \\
& \\
\bottomrule
\end{tabular}
\caption{Correspondence between $\PGL(3,q)$-orbits of solids in $\PG(5,q)$ and Campbell's~\cite{campbell} ``classes'' and ``sets of classes'' of pencils of conics in $\PG(2,q)$, $q$ even.}
\label{comparison}
\end{table}

\section*{Acknowledgements}

The second author acknowledges the support of {\em The Scientific and Technological Research Council of Turkey} T\"UB\.{I}TAK (project no.~118F159).

\end{document}